\numberwithin{equation}{section}
\def\e{\varepsilon}
\def\epsilon{\varepsilon}
\newcommand{\wt}{\widetilde}
\def\alb#1\ale{\begin{align*}#1\end{align*}}
\newcommand{\eqb}{\begin{equation}}
\newcommand{\eqe}{\end{equation}}
\DeclareMathOperator{\dist}{dist}
\newcommand{\bbD}{\mathbb{D}}
\newcommand{\bbH}{\mathbb{H}}
\newcommand{\bbR}{\mathbb{R}}
\newcommand{\bbP}{\mathbb{P}}
\newcommand{\SLE}{\mathrm{SLE}}
\newtheorem{theorem}{Theorem}[section]
\newtheorem*{thma}{Theorem A}
\newtheorem*{thmb}{Theorem B}
\newtheorem{lemma}[theorem]{Lemma}
\newtheorem{proposition}[theorem]{Proposition}
\newtheorem*{proposition*}{Proposition}
\newtheorem{corollary}{Corollary}[section]
\newtheorem*{corollary*}{Corollary}
\newtheorem{definition}[theorem]{Definition}
\newtheorem*{definitions*}{Definitions}
\newtheorem*{example*}{\bf Example}
\theoremstyle{remark}
\numberwithin{equation}{section}
\title{Time-reversal of multiple-force-point chordal $\SLE_\kappa(\underline{\rho})$}
\author{Pu Yu}
\date{\today}
\begin{document}

\maketitle

\begin{abstract}
Chordal SLE$_\kappa(\underline{\rho})$ is a natural variant of {the} chordal SLE curve. It is a family of random non-crossing curves on the upper half plane   from 0 to $\infty$, whose law is influenced by additional force points on $\mathbb R$. When there are force points away from the origin, the law of SLE$_\kappa(\underline{\rho})$ is not reversible,  unlike the ordinary chordal SLE$_\kappa$.   Zhan (2019) gives an explicit description of the law of the time reversal of  
SLE$_\kappa(\underline{\rho})$ when all force points lie on the same sides of the origin, and conjectured that a similar result holds in general. 
We prove his conjecture. {Specifically}, based on Zhan's result, using the techniques from the Imaginary Geometry developed by Miller and Sheffield (2013), we show that when $\kappa\in(0,8)$, the law of the time reversal of non-boundary filling $\SLE_\kappa(\underline{\rho})$ process is absolutely continuous with respect to $\SLE_\kappa(\underline{\hat{\rho}})$ for some $\underline{\hat{\rho}}$ determined by $\underline{\rho}$, with the Radon-Nikodym derivative being a product of conformal derivatives.
\end{abstract}
	\section{Introduction}

The Schramm-Loewner Evolution (SLE$_\kappa$) with $\kappa>0$  is an important 
family of random non-self-crossing curves introduced by Schramm~\cite{Sch00}.
They have been proved or conjectured to described a large class of two-dimensional lattice models at criticality.  
We refer the reader to~\cite{Law08,Sch06ICM,SmirnovICM} for basic properties of SLE and their relation to 2D lattice models.

The most basic version of SLE is the chordal SLE$_\kappa$ curve, which is a random curve between two boundary points of a {simply} connected domain characterized by conformal invariance and the domain Markov property.
It was conjectured by Rohde and Schramm~\cite{RS05} that chordal SLE$_\kappa$ with $\kappa\in (0,8]$
satisfies reversibility. Namely, modulo a time reparametrization the time reversal of a chordal SLE$_\kappa$ curve  is also a chordal SLE$_\kappa$.
The conjecture was first proved for $\kappa\in (0,4]$ by Zhan~\cite{Zhan08reverse} using the so-called  commutation coupling. 
The $\kappa\in (4,8)$ case was proved by Miller and Sheffield~\cite{IGIII}   using the imaginary geometry theory. {The chordal $\SLE_8$ is
	the scaling limit of UST Peano curve with half free and half wired boundary
	conditions~\cite{lawler2011conformal} and therefore is also reversible.}  

Chordal SLE$_\kappa(\underline{\rho} )$ curves are important variants of chordal SLE. They are  still curves between two boundary points of a {simply} connected domain, but their laws depend on  some additional marked points called force points. 
They were introduced by Lawler, Schramm and Werner~\cite{LSW03conformalrestriction} in the theory of conformal restriction, and play a fundamental role in imaginary geometry as flow lines emanating from a boundary point~\cite{MS16a}. In~\cite{MS16b,IGIII}, it was proved that chordal SLE$_\kappa(\underline{\rho} )$ for $\kappa\in(0,8)$ with at most two force points lying infinitesimally close to the starting point satisfy the reversibility.   When there are force points away from the origin, the law of chordal SLE$_\kappa(\underline{\rho})$ is not reversible anymore. Recently, Zhan~\cite{Zhan19} gave an explicit description of the law of  the time reversal of  
SLE$_\kappa(\underline{\rho})$ when $\kappa\in (0,4]$ and all force points lie on the same side of the origin, and when $\kappa\in (4,8)$, all force points lie on the same side, and the curve is not boundary touching on this side. In the same paper, he conjectured that a similar result holds for general 
chordal SLE$_\kappa(\underline{\rho})$ with $\kappa\in (0,8)$ as long as the curve is non-boundary filling; see~\cite[Conjecture 1.3]{Zhan19}. In this paper we prove his conjecture. 

To state our main result, we introduce the necessary notations to describe chordal SLE$_\kappa(\underline{\rho} )$ curves with their precise definition postponed to Section~\ref{subsec:pre-ig}.
Let {$\kappa\in (0,8]$}. Fix the force points $x^{k,L}<...<x^{1,L}<x^{0,L}=0^-<x^{0,R}=0^+<x^{1,R}<...<x^{\ell,R}$ and for each force point $x^{i,q}$, $q\in\{L,R\}$, we assign a weight $\rho^{i,q}\in\bbR$, such that 
\begin{equation}\label{eq:continuation-threshold}
	\sum_{i=0}^j\rho^{i,L}>(-2)\vee(\frac{\kappa}{2}-4) \ \text{for all}\ 0\le j\le k\ \ \text{and} \ \ \sum_{i=0}^j\rho^{i,R}>(-2)\vee(\frac{\kappa}{2}-4)  \ \text{for all}\ 0\le j\le \ell.  
\end{equation}
We refer to the {vectors of force points and weights}  as $\underline{x} = (\underline{x}^L;\underline{x}^R)$ and $\underline{\rho} = (\underline{\rho}^L;\underline{\rho}^R)$. Given an $\SLE_\kappa(\underline{\rho})$ process $\eta$ from 0 to $\infty$ in the upper half plane $\bbH$ with force points $\underline{x}$, for each $i\ge 1$ and $q\in\{L,R\}$, let $D_\eta^{i,q}$ be the connected component of $\bbH\backslash\eta$ containing $x^{i,q}$, and $\sigma_\eta^{i,q},\xi_\eta^{i,q}$ be the first and the last point on $\partial D_\eta^{i,q}$ traced by $\eta$.  Consider the conformal map $\psi_\eta^{i,q}:D_\eta^{i,q}\to\bbH$ sending {$(\sigma_\eta^{i,q},x_\eta^{i,q},\xi_\eta^{i,q})$} to $(0,\pm1,\infty)$ where we take the $+$ sign  when $q=R$ and take the $-$ sign  when $q=L$. 

We now introduce a family of  measures on curves describing the time reversal of chordal $\SLE_\kappa(\underline{\rho})$.
\begin{definition}\label{def:sle-weight-confradius}
	{Suppose $\underline{x}$ and $\underline{\rho}$  satisfy~\eqref{eq:continuation-threshold}}. We associate a power parameter $\alpha^{i,q}\in\bbR$ for each $x^{i,q}$ with $\alpha^{0,L}=\alpha^{0,R}=0$.
	Define  $\wt{\SLE}_\kappa(\underline{\rho};\underline{\alpha})$ {with force points $\underline{x}$} to be the measure on {continuous curves in $\overline{\bbH}$ from 0 to $\infty$} which is absolutely continuous with respect to $\SLE_\kappa(\underline{\rho})$ with Radon-Nikodym derivative
	\begin{equation}\label{eq:def-sle-weight-confradius}
		\frac{d\wt{\SLE}_\kappa(\underline{\rho};\underline{\alpha})}{d\SLE_\kappa(\underline{\rho})}(\eta) = \prod_{q\in\{L,R\}}\prod_{i\ge 1} |x^{i,q}\cdot(\psi_\eta^{i,q})'(x^{i,q})|^{\alpha^{i,q}}.  
	\end{equation}
\end{definition}
\begin{figure}
	\centering
	\includegraphics[scale=0.78]{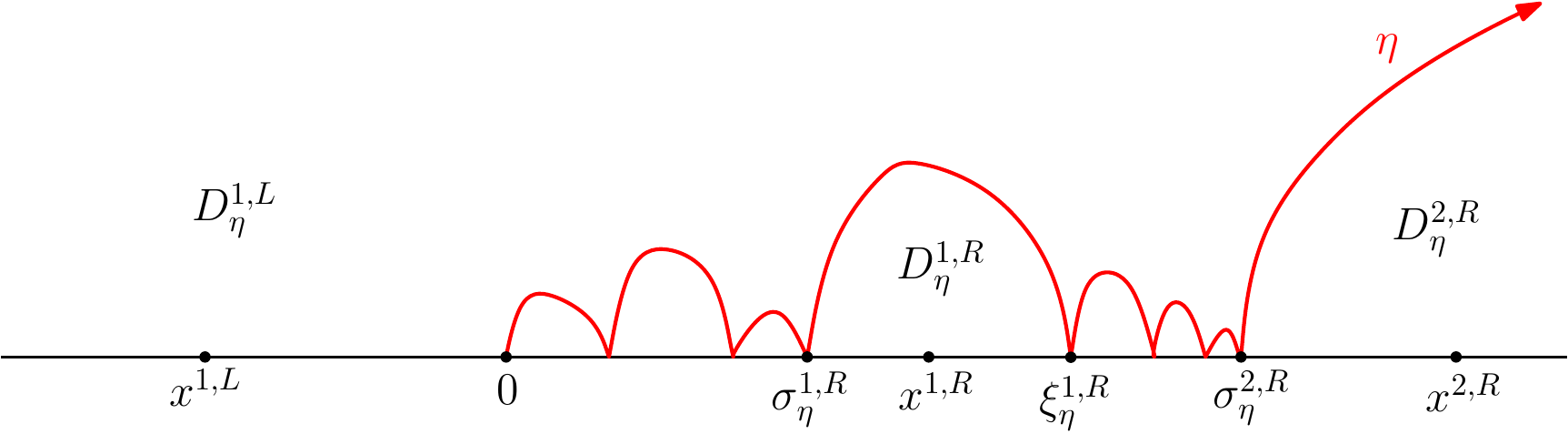}
	\caption{An   $\wt{\SLE}_\kappa(\underline{\rho};\underline{\alpha})$ processes with force points $x^{1,L},0^-;0^+,x^{1,R},x^{2,R}$. By definition $\sigma_\eta^{1,L}=0^-$, $\xi_\eta^{1,L}=\xi_\eta^{2,R}=\infty$.  For $i=1,2$, the conformal map $\psi_\eta^{i,R}:D_\eta^{i,R}\to\bbH$ sends $(\sigma_\eta^{i,R},x^{i,R},\xi^{i,R})$ to $(0,1,\infty)$, and the conformal map $\psi_\eta^{1,L}:D_\eta^{1,L}$ sends $(\sigma_\eta^{1,L},x^{1,L},\xi^{1,L})$ to $(0,-1,\infty)$. Then the measure $\wt{\SLE}_\kappa(\rho^{0,L},\rho^{1,L};\\ \rho^{0,R},\rho^{1,R},\rho^{2,R};\alpha^{1,L};\alpha^{1,R},\alpha^{2,R})$ is absolutely continuous w.r.t.\  ${\SLE}_\kappa(\rho^{0,L},\rho^{1,L};\rho^{0,R},\rho^{1,R},\rho^{2,R})$ with Radon-Nikodym derivative $|x^{1,L}(\psi_\eta^{1,L})'(x^{1,L})|^{\alpha^{1,L}}|x^{1,R}(\psi_\eta^{1,R})'(x^{1,R})|^{\alpha^{1,R}}|x^{2,R}(\psi_\eta^{2,R})'(x^{2,R})|^{\alpha^{2,R}}$. 
}
	\label{fig:reverse-def}
\end{figure}

Let us recall some statements on the time reversal of chordal $\SLE_\kappa(\underline{\rho})$ processes from existing literature. The first one is about the time reversal of $\SLE_\kappa(\rho^L;\rho^R)$ processes, which is shown in~\cite[Theorem 1.1]{MS16b} and~\cite[Theorem 1.2]{IGIII}. Let $J:\bbH\to\bbH$ be the map $J(z)=-1/z$. {For a curve $\eta$, we write $\mathcal{R}(\eta)$ for its time reversal.} 
\begin{thma}\label{thm:igII}
	Let $\kappa\in(0,8]$ and $\rho^L,\rho^R>-2$ such that  $\rho^L,\rho^R\ge\frac{\kappa}{2}-4$ if $\kappa\in (4,8]$. Let $\eta$ be an $\SLE_\kappa(\rho^L;\rho^R)$ process in $\bbH$ from 0 to $\infty$ with force points $0^-;0^+$. Then {modulo time parametrization,} $\mathcal{R}(J\circ\eta)$ is the $\SLE_\kappa(\rho^R;\rho^L)$ process in $\bbH$ from 0 to $\infty$ with force points $0^-;0^+$.
\end{thma}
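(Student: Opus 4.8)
The plan is to prove this via imaginary geometry, realizing $\eta$ as a flow line of a Gaussian free field and reducing the statement to the reversibility of flow lines. Write $\lambda = \pi/\sqrt{\kappa}$ and $\chi = \tfrac{2}{\sqrt{\kappa}} - \tfrac{\sqrt{\kappa}}{2}$. First I would couple $\eta$ with a GFF $h$ on $\bbH$ whose boundary data is the constant $-\lambda(1+\rho^L)$ on $(-\infty,0)$ and $\lambda(1+\rho^R)$ on $(0,\infty)$; the angle-zero flow line of $h$ from $0$ to $\infty$ is then precisely the $\SLE_\kappa(\rho^L;\rho^R)$ process with force points $0^-;0^+$. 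The hypotheses $\rho^L,\rho^R>-2$, together with $\rho^L,\rho^R\ge\tfrac{\kappa}{2}-4$ when $\kappa\in(4,8]$, are exactly what keeps the boundary data above the continuation threshold $(-2)\vee(\tfrac{\kappa}{2}-4)$ of~\eqref{eq:continuation-threshold}, guaranteeing that the flow line is a well-defined continuous curve targeting $\infty$ that is not absorbed into the boundary before reaching $\infty$.

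The core of the argument is the reversibility of flow lines. I would show that the time reversal $\mathcal{R}(\eta)$, viewed as a curve from $\infty$ to $0$, is itself the flow line of the \emph{same} field $h$ run from $\infty$ to $0$ (defined through the coordinate-change rule below). This step rests on three ingredients from the imaginary geometry theory of Miller and Sheffield: (i) the flow line is almost surely a measurable function of $h$; (ii) the monotonicity and interaction rules preventing flow lines based at different points from crossing; and (iii) a symmetry argument---a bi-chordal coupling or resampling of the conditional field---showing that the curve determined by $h$ and targeted at the opposite endpoint has the same trace as $\eta$ with reversed orientation. This is where the constraints on $\kappa$ and on the weights enter substantively, since they are what make the flow lines continuous and non-boundary-filling, so that the reversibility machinery applies.

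It then remains to identify the law of the reversed curve. The involution $J(z)=-1/z$ swaps $0$ and $\infty$ and interchanges the two boundary rays $(-\infty,0)$ and $(0,\infty)$, and it carries flow lines to flow lines via $h \mapsto h\circ J^{-1} - \chi\arg\big((J^{-1})'\big)$. Since $J^{-1}=J$ and $J'(z)=1/z^2$, the correction term equals $2\chi\arg z$ on the boundary, contributing the winding $2\pi\chi$ along $(-\infty,0)$ that accounts for the orientation flip. Tracking the piecewise-constant boundary data through this transformation shows that the data adjacent to the new base point $0$ is exactly that of an $\SLE_\kappa(\rho^R;\rho^L)$ process: the left and right weights are interchanged. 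Combined with the flow-line reversibility of the previous step, $\mathcal{R}(J\circ\eta)$ is, modulo time parametrization, the flow line from $0$ to $\infty$ of a GFF with $\SLE_\kappa(\rho^R;\rho^L)$ boundary data, which is the claim.

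The main obstacle is the middle step: establishing the reversibility of flow lines themselves, for which the coupling set-up and the coordinate-change bookkeeping are comparatively routine. For $\kappa\in(0,4]$ one may replace the resampling argument by Zhan's commutation-relation coupling, which builds a single pair of curves whose marginals are the forward and the reversed process and whose joint law is symmetric; the endpoint case $\kappa=8$ follows separately from the reversibility of the UST Peano curve. I expect the genuinely delicate points to be the precise continuation-threshold regime in which flow lines stay continuous and reversible, and the careful accounting of the winding term so that the weights end up cleanly swapped rather than shifted.
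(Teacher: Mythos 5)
This statement is quoted in the paper as Theorem A and is not proved there: the paper simply cites \cite[Theorem 1.1]{MS16b} for $\kappa\in(0,4)$ and \cite[Theorem 1.2]{IGIII} for $\kappa\in(4,8)$, adding only a one-line remark that the $\kappa=8$ endpoint follows from the reversibility of chordal $\SLE_8$ together with \cite[Theorem 1.1]{MS16b} and SLE duality. Your proposal is, in outline, a sketch of how those cited papers argue, so in that sense it follows the same route. However, as a proof it has a genuine gap at its center: the ``middle step'' asserting that $\mathcal{R}(\eta)$ is the flow line of the (suitably transformed) field targeted at the opposite endpoint \emph{is} the theorem. Your ingredient (iii) --- the bi-chordal resampling argument showing that the curve determined by $h$ and targeted at the other endpoint coincides with the reversed trace --- is the entire technical content of IG~II and IG~III (including the uniqueness of the measure satisfying the resampling property, which is itself a delicate Markov-chain argument of the type reproduced in the appendix of this paper for Lemma~\ref{lem:resampling}). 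Deferring it to ``three ingredients from the imaginary geometry theory'' does not constitute a proof; the coordinate-change bookkeeping under $J$ that you do carry out is the easy part. Also note that the reversal of a zero-angle flow line of $h$ is not literally a flow line of the \emph{same} field from $\infty$ to $0$; there is a sign/angle adjustment (reversal corresponds to reversing the direction of the vector field, i.e.\ an angle shift by $\pi$, equivalently working with $-h$ up to an additive constant), which is exactly the kind of bookkeeping that makes the weights swap rather than shift.

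A second, concrete error is your treatment of $\kappa=8$. The hypothesis there is $\rho^L,\rho^R\ge\frac{\kappa}{2}-4=0$, so you must reverse $\SLE_8(\rho^L;\rho^R)$ with general nonnegative weights, not just the ordinary $\SLE_8$. Reversibility of the UST Peano curve only covers $\rho^L=\rho^R=0$. As the paper indicates, the general case requires combining $\SLE_8$ reversibility with SLE duality (Proposition~\ref{prop:duality}), which expresses the left and right boundaries of the $\SLE_8(\rho^L;\rho^R)$ as $\SLE_2(\underline\rho)$-type curves and then invokes the $\kappa\in(0,4)$ reversal statement for those, before reconstructing the reversed $\kappa=8$ curve from its reversed boundaries. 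Similarly, your suggestion that Zhan's commutation coupling substitutes for the resampling argument for all $\kappa\in(0,4]$ with general $\rho^L,\rho^R>-2$ should be sourced carefully: the commutation-coupling proof in \cite{Zhan08reverse} is for ordinary $\SLE_\kappa$, and extending it to two nondegenerate weights is a separate piece of work.
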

We comment that the $\kappa=8$ case above is not stated in~\cite[Theorem 1.2]{IGIII}, yet it readily follows from the reversibility of chordal $\SLE_8$ and~\cite[Theorem 1.1]{MS16b} along with SLE duality~\cite{zhan2008duality,MS16a} (see Proposition~\ref{prop:duality}). 

When all the force points lie on the same side of 0, the following theorem is {shown in Theorem 1.2 and Section 3.2 of~\cite{Zhan19} via the construction of the reversed curve}. 
\begin{thmb}\label{thm:dapeng}
	Let $\ell\ge0$. Fix $\rho^{0,R},...,\rho^{\ell,R}\in\bbR$, such that $\kappa\in(0,4]$, $\min_{0\le j\le\ell}\sum_{i=0}^j\rho^{i,R}>-2$ if $\kappa\in(0,4]$, and  $\min_{0\le j\le\ell}\sum_{i=0}^j\rho^{i,R}\ge\frac{\kappa}{2}-2$ if $\kappa\in(4,8)$. Let $\underline{\rho}^R$ be the vector of $\rho^{i,R}$ and $\eta$ be a chordal $\SLE_\kappa(\underline{\rho}^R)$ curve in $\bbH$ from $0$ to $\infty$ with force points $0^+=x^{0,R}<x^{1,R}<...<x^{\ell,R}$.   Let $x^{\ell+1,R}=+\infty$ and $\rho^{\ell+1,R} = -\sum_{i=0}^\ell \rho^{i,R}$.   For $0\le i\le \ell$, let $\hat{x}^{i,L} = J(x^{\ell+1-i,R})$, $\hat{\rho}^{i,L}=-\rho^{\ell+1-i,R}$.
	Here we use the convention $J(\pm\infty)=0^{\mp}$. For $1\le i\le \ell$, let $\hat{\alpha}^{i,L} = \frac{\hat{\rho}^{i,L}(\kappa-4)}{2\kappa}$. Let $\underline{\hat{x}}^L$, $\underline{\hat{\rho}}^L$, $\underline{\hat{\alpha}}^L$ be the vector of $\hat{x}^{i,q}$, $\hat{\rho}^{i,q}$ and $\hat{\alpha}^{i,q}$. Then up to reparametrization, the law of {$\mathcal{R}(J\circ\eta)$} is equal to $\frac{1}{Z}\wt{\SLE}_\kappa(\underline{\hat{\rho}}^L;\underline{\hat{\alpha}}^L)$ with force points $\underline{\hat{x}}^L$ for some normalizing constant $Z\in(0,\infty)$.
\end{thmb}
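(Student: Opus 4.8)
The plan is to induct on the number $\ell$ of force points lying strictly between $0$ and $+\infty$, with Theorem~A as the base case. Indeed, for $\ell=0$ the process $\SLE_\kappa(\rho^{0,R})$ is exactly $\SLE_\kappa(0;\rho^{0,R})$ with force points $0^-;0^+$, so by Theorem~A its image $\mathcal{R}(J\circ\eta)$ is $\SLE_\kappa(\rho^{0,R};0)=\SLE_\kappa(\hat\rho^{0,L})$ with force point $0^-$ and weight $\hat\rho^{0,L}=-\rho^{1,R}=\rho^{0,R}$, and no conformal-derivative weight appears since $\hat\alpha^{i,L}$ is only defined for $i\ge1$; the hypotheses of Theorem~B guarantee those of Theorem~A. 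The conceptual point is that the \emph{naive} candidate for the reversal --- the flow line of the associated Gaussian free field traversed from $\infty$ to $0$, which is an honest unweighted $\SLE_\kappa(\hat{\underline\rho}^L)$ --- is already \emph{wrong} once $\ell\ge1$, and the conformal-derivative product in $\wt{\SLE}_\kappa(\hat{\underline\rho}^L;\hat{\underline\alpha}^L)$ is exactly the Radon--Nikodym derivative recording this failure of naive reversibility. The proof must therefore both establish that the reversal is, up to reweighting, a flow line from the opposite endpoint, and compute the weight one force point at a time.

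For the structural part I would pass to imaginary geometry, realizing $\eta$ as the flow line from $0$ to $\infty$ of a GFF $h$ on $\bbH$ whose boundary data is $-\lambda$ on $\bbR_-$ (with $\lambda=\pi/\sqrt\kappa$) and $\lambda(1+\sum_{i=0}^j\rho^{i,R})$ on $(x^{j,R},x^{j+1,R})$. Applying the involution $J$ together with the imaginary-geometry coordinate change $h\mapsto h\circ J-\chi\arg J'$, $\chi=\tfrac{2}{\sqrt\kappa}-\tfrac{\sqrt\kappa}{2}$, turns $J\circ\eta$ into a flow line of the transformed field, and its time reversal into the object to be identified. The reduction to Theorem~A comes from localization: near either endpoint, and within any one complementary component $D_\eta^{i,q}$, the reversed curve feels only an $O(1)$ number of force points, so a commutation/resampling coupling lets me compare it with the two-force-point situation controlled by Theorem~A. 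This is where the hypotheses matter: the non-boundary-filling conditions (in particular $\sum_{i=0}^j\rho^{i,R}\ge\frac{\kappa}{2}-2$ when $\kappa\in(4,8)$) ensure that each $D_\eta^{i,q}$, the boundary points $\sigma_\eta^{i,q},\xi_\eta^{i,q}$, and the uniformizing maps $\psi_\eta^{i,q}$ are well defined.

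For the weight I would treat one force point at a time, which is the inductive step. Conditioning on everything except the behaviour near $x^{i,R}$, its contribution to the reversed law relative to the unweighted $\SLE_\kappa(\hat{\underline\rho}^L)$ is computed by mapping its component to $\bbH$ through $\psi_\eta^{i,q}$, which sends the force point to the standard location $\pm1$. Under this map the coordinate-change term $-\chi\arg((\psi_\eta^{i,q})')$ produces a factor that is a power of $|(\psi_\eta^{i,q})'(x^{i,q})|$, while the $|x^{i,q}|$ prefactor records the standard normalization; equivalently, a partition-function martingale comparing the configurations with and without the force point converges, as $x^{i,q}$ is separated, to a power of the conformal radius $|x^{i,q}(\psi_\eta^{i,q})'(x^{i,q})|$. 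The exponent is the product of $\chi$ with the charge $\propto\hat\rho^{i,L}/\sqrt\kappa$ of the force point, which, with the conventions fixed above, evaluates to $\hat\alpha^{i,L}=\frac{\hat\rho^{i,L}(\kappa-4)}{2\kappa}$. Taking the product over $i$ and $q$ gives the claimed Radon--Nikodym derivative, and finiteness of $Z\in(0,\infty)$ follows from integrability of this weight against $\SLE_\kappa(\hat{\underline\rho}^L)$, again using non-boundary-filling.

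The hard part will be the structural step, because time reversal genuinely couples the two ends: the forward curve near $0$ corresponds to the reversed curve near $\infty$ and vice versa, so there is no single local absolute-continuity relation that reduces the whole problem to Theorem~A. One must instead build an honest coupling of the forward and reversed curves --- a commutation argument, or a global resampling of the GFF --- and only then read off the per-force-point density. Verifying that this density is \emph{exactly} the product of conformal derivatives with the precise exponent $\frac{\hat\rho^{i,L}(\kappa-4)}{2\kappa}$, rather than some unidentified density, is the technical core; the $\kappa\in(4,8)$ regime adds the burden of ruling out boundary-filling throughout, which is precisely what the hypothesis $\min_{0\le j\le\ell}\sum_{i=0}^j\rho^{i,R}\ge\frac{\kappa}{2}-2$ secures.
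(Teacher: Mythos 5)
First, a point of orientation: the paper does not prove Theorem~B at all --- it is imported verbatim from Zhan \cite{Zhan19}, where the proof goes by an entirely different route, namely an explicit construction of the reversed Loewner chain (the ``reversed intermediate'' $\mathrm{i}\SLE_\kappa^r(\underline{\rho})$ process, whose driving function involves Appell--Lauricella hypergeometric functions) together with a commutation coupling in the style of Zhan's reversibility proof. The imaginary-geometry machinery you invoke is what the present paper uses \emph{on top of} Theorem~B to get the two-sided Theorem~\ref{thm:main}; it is not how Theorem~B itself is established, and the paper's own arguments (Lemma~\ref{lem:onesided+}, Proposition~\ref{prop:kappa<4}) all consume Theorem~B as an input rather than produce it.

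As for the proposal itself: it is a plan, not a proof, and you say so yourself --- ``the hard part will be the structural step \dots one must instead build an honest coupling \dots and only then read off the per-force-point density.'' That structural step \emph{is} the theorem; everything else is bookkeeping. Concretely, two things are missing. (i) The induction on $\ell$ does not obviously close. Adding the force point $x^{\ell,R}$ reweights the law of the \emph{entire} forward curve via a Girsanov martingale, and since the reversal starts at $\infty$, the initial behaviour of the reversed curve is exactly where the extra force point is felt most strongly; there is no local absolute-continuity relation near the relevant endpoint that reduces $\ell+1$ force points to $\ell$. To make your step work you would need to show that the Girsanov martingale comparing the two forward laws is uniformly integrable, identify its a.s.\ limit as an explicit functional of the complementary component $D_\eta^{\ell,R}$ (this is where $|x^{\ell,R}(\psi_\eta^{\ell,R})'(x^{\ell,R})|^{\alpha}$ would come from), and then re-express that limit in reversed coordinates so that it combines with the inductive hypothesis to produce $\wt{\SLE}_\kappa(\underline{\hat\rho}^L;\underline{\hat\alpha}^L)$ with the shifted weight vector. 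None of this is carried out; the exponent $\frac{\hat\rho^{i,L}(\kappa-4)}{2\kappa}$ is asserted rather than derived. (ii) The ``localization'' argument cannot deliver the claimed Radon--Nikodym derivative, because the weight $|x^{i,q}(\psi_\eta^{i,q})'(x^{i,q})|^{\hat\alpha^{i,q}}$ is a global functional of the whole curve (it depends on the full complementary component $D_\eta^{i,q}$, hence on both the beginning and the end of $\eta$), so no argument that only controls the curve near one endpoint or near one force point can identify it. The correct statement of Theorem~B therefore has to be taken from \cite{Zhan19}, or re-proved by a genuinely global argument such as Zhan's.
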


In~\cite{Zhan19}, {the time reversal of $\SLE_\kappa(\underline{\rho}^R)$ is described in terms of reversed intermediate $\SLE_\kappa(\underline{\rho})$ ($\mathrm{i}\SLE_\kappa^r(\underline{\rho})$) process, which agrees with $\wt{\SLE}_\kappa(\underline{\hat{\rho}}^L;\underline{\hat{\alpha}}^L)$ when normalized to be a probability measure. The $\mathrm{i}\SLE_\kappa^r(\underline{\rho})$} 
process is described explicitly using a Loewner evolution based on  Appell-Lauricella multiple hypergeometric function.    The constant $Z$ can be traced via~\cite[(3.16),(3.19)]{Zhan19} and~\cite[Remark 3.6]{Zhan19}, and can be expressed by a hypergeometric function (in fact a product of the gamma functions) depending only on $\kappa,\underline{\rho}^R$ but not on the location of the force points $\underline{x}^R$. 

Our main result is the following.
\begin{theorem}\label{thm:main}
	{Let $\kappa\in(0,8]$}. Fix $\underline{x}$, $\underline{\rho}$ with~\eqref{eq:continuation-threshold}, and let $\eta$ be a chordal $\SLE_\kappa(\underline{\rho})$ curve in $\bbH$ from $0$ to $\infty$ with force points $\underline{x}$. Let $x^{k+1,L}=-\infty$, $x^{\ell+1,R}=+\infty$, $\rho^{k+1,L} = -\sum_{i=0}^k\rho^{i,L}$ and $\rho^{\ell+1,R} = -\sum_{i=0}^\ell \rho^{i,R}$.    For $0\le i\le \ell$, let $\hat{x}^{i,L} = J(x^{\ell+1-i,R})$, $\hat{\rho}^{i,L}=-\rho^{\ell+1-i,R}$.
	For $0\le i\le k$, let $\hat{x}^{i,R} = J(x^{k+1-i,L})$, $\hat{\rho}^{i,R}=-\rho^{k+1-i,R}$. For $i\ge 1$ and $q\in\{L,R\}$, let $\hat{\alpha}^{i,q} = \frac{\hat{\rho}^{i,q}(\kappa-4)}{2\kappa}$. Let $\underline{\hat{x}}$, $\underline{\hat{\rho}}$, $\underline{\hat{\alpha}}$ be the vector of $\hat{x}^{i,q}$, $\hat{\rho}^{i,q}$ and $\hat{\alpha}^{i,q}$. Then up to reparametrization, the law of {$\mathcal{R}(J\circ\eta)$} is {equal to $\frac{1}{Z}\wt{\SLE}_\kappa(\underline{\hat{\rho}};\underline{\hat{\alpha}})$ with force points $\underline{\hat{x}}$ for some normalizing constant $Z:=Z(\underline{\rho})\in(0,\infty)$}.
\end{theorem}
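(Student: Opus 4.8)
The plan is to reduce the two-sided configuration to the one-sided situation covered by Theorem~B by means of an explicit change of measure that switches the force points on one side on or off. Write $\mu$ for the law of the forward $\SLE_\kappa(\underline{\rho})$ with force points $\underline{x}$, and $\nu$ for the law of the forward $\SLE_\kappa(\underline{\rho}^R)$ carrying only the right force points $0^+=x^{0,R}<\dots<x^{\ell,R}$ (all left weights set to $0$). The central ingredient is a Radon--Nikodym identity
\begin{equation*}
\frac{d\mu}{d\nu}(\eta)=c\,\prod_{i\ge 1}F^{i,L}(\eta),
\end{equation*}
in which each $F^{i,L}$ is a conformally covariant functional attached to the left force point $x^{i,L}$ and $c=c(\underline{\rho})\in(0,\infty)$. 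At the level of the Loewner driving function this is Girsanov's theorem: passing from $\nu$ to $\mu$ adds the drift $\sum_{i\ge 0}\rho^{i,L}/(W_t-V^{i,L}_t)$, so the density up to a stopping time is the corresponding exponential martingale, which by the Schramm--Wilson computation is the ratio of the $\SLE_\kappa(\underline{\rho})$ partition-function martingales and hence a product of the standard factors $|g_t(x^{i,L})-W_t|^{\rho^{i,L}/\kappa}\,|g_t'(x^{i,L})|^{\rho^{i,L}(\rho^{i,L}+4-\kappa)/(4\kappa)}$ together with pairwise interaction terms. The requisite mutual absolute continuity of $\SLE_\kappa(\underline{\rho})$ laws with differing boundary data, and the identification of the density as a product of conformal derivatives, is precisely the imaginary-geometry input of Miller--Sheffield.

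I would then transport this relation through the reversal map $T:=\mathcal{R}\circ J$, a measurable bijection on curves from $0$ to $\infty$ in $\bbH$ with inverse $T^{-1}=J\circ\mathcal{R}$. Since $T$ is a bijection, $T_*\mu\ll T_*\nu$ with density $\gamma\mapsto c\prod_{i\ge1}F^{i,L}(T^{-1}\gamma)$. The left--right mirror of Theorem~B, applied to the one-sided process $\nu$, gives $T_*\nu=\tfrac1{Z^R}\,\wt{\SLE}_\kappa(\underline{\hat{\rho}}^L;\underline{\hat{\alpha}}^L)$ with force points $\underline{\hat{x}}^L$, where $\hat{x}^{i,L}=J(x^{\ell+1-i,R})$ and $\hat{\alpha}^{i,L}=\hat{\rho}^{i,L}(\kappa-4)/(2\kappa)$. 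The remaining and decisive step is the transformation identity
\begin{equation*}
\prod_{i\ge1}F^{i,L}(T^{-1}\gamma)=\frac{d\,\SLE_\kappa(\underline{\hat{\rho}})}{d\,\SLE_\kappa(\underline{\hat{\rho}}^L)}(\gamma)\cdot\prod_{i\ge1}\bigl|\hat{x}^{i,R}\,(\psi_\gamma^{i,R})'(\hat{x}^{i,R})\bigr|^{\hat{\alpha}^{i,R}},\qquad \hat{x}^{i,R}=J(x^{k+1-i,L}),
\end{equation*}
which says that the pulled-back forward weighting factors precisely as the backward change of measure that reinstalls the right hat force points in the driving function, times the conformal-derivative weight of Definition~\ref{def:sle-weight-confradius}. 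Granting this, $T_*\mu=\tfrac1Z\,\wt{\SLE}_\kappa(\underline{\hat{\rho}};\underline{\hat{\alpha}})$ with force points $\underline{\hat{x}}$, where $Z$ absorbs $Z^R$, $c$ and the constants from the identity and depends only on $\kappa$ and $\underline{\rho}$; this is the assertion.

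To verify the transformation identity, I would use that $x\,\psi_\eta^{i,q}{}'(x)$ is conformally natural, and for a boundary force point equals $x\,g_\infty'(x)/\bigl(g_\infty(x)-g_\infty(\sigma_\eta^{i,q})\bigr)$ for the uniformizing map $g_\infty$ of the component $D_\eta^{i,q}$ (with the analogous formula for a swallowed component). Time reversal interchanges the two distinguished prime ends $\sigma_\eta^{i,q}$ and $\xi_\eta^{i,q}$ of each component and replaces each weight $\rho$ by $-\rho$, and the $J$-conjugation contributes a further Jacobian. The content of the computation is that the two Schramm--Wilson exponents $\rho^{i,L}/\kappa$ and $\rho^{i,L}(\rho^{i,L}+4-\kappa)/(4\kappa)$, once reorganized under these operations, split off exactly the driving-function change of measure and leave the residual power $\hat{\alpha}^{i,R}=\hat{\rho}^{i,R}(\kappa-4)/(2\kappa)$; the internal agreement of this value with the exponent already produced by Theorem~B in the one-sided case is a useful consistency check.

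I expect the main obstacle to lie in making the Radon--Nikodym identity of the first step rigorous as a relation between the laws of entire curves rather than merely up to a stopping time. Three points require care: controlling the Girsanov martingale through the times at which $\eta$ swallows a left force point (which occurs when $\kappa\in(4,8)$), where the factors $|g_t(x^{i,L})-W_t|$ degenerate and one must invoke \eqref{eq:continuation-threshold} to continue the process; proving that the $t\to\infty$ limit exists and equals the conformal derivative of $\psi_\eta^{i,L}$, so that the weight of Definition~\ref{def:sle-weight-confradius} is the correct limiting object, which is exactly where the non-boundary-filling hypothesis enters, guaranteeing that each $x^{i,q}$ stays on the boundary of a nondegenerate component $D_\eta^{i,q}$; and establishing the uniform integrability that lets the local densities pass to the limit without loss of mass, which is also what yields $Z\in(0,\infty)$. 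Finally, the case $\kappa=8$ is recovered, as for Theorem~A, by combining the analysis for $\kappa<8$ with the reversibility of chordal $\SLE_8$.
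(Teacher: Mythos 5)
Your strategy rests on a global Radon--Nikodym identity $\frac{d\mu}{d\nu}=c\prod_i F^{i,L}$ between the two-sided law $\mu=\SLE_\kappa(\underline{\rho})$ and the one-sided law $\nu=\SLE_\kappa(\underline{\rho}^R)$, but such an identity does not exist on the space of entire curves for a large part of the parameter range allowed by~\eqref{eq:continuation-threshold}. Condition~\eqref{eq:continuation-threshold} permits partial sums $\sum_{i=0}^j\rho^{i,L}$ in the window $\bigl((-2)\vee(\tfrac{\kappa}{2}-4),\tfrac{\kappa}{2}-2\bigr)$, in which case $\mu$ gives positive mass to curves hitting the interval $(x^{j+1,L},x^{j,L})$, whereas for $\kappa\le 4$ the reference process $\nu$ (all left weights zero) a.s.\ does not touch $(-\infty,0)$; for $\kappa\in(4,8)$ the mismatch goes the other way. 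So $\mu\not\ll\nu$ in general, the Girsanov exponential is a strict local martingale that degenerates exactly on the boundary-hitting and force-point-swallowing events, and no amount of uniform-integrability bookkeeping can recover the identity you need as stated. The difficulty you flag as ``the main obstacle'' is therefore not a technical gap to be closed but a structural failure of the approach. In addition, your ``transformation identity'' --- which is where the whole theorem actually lives, since it must reinstall the right-side hat force points on the reversed curve and isolate the exponent $\hat{\alpha}^{i,R}=\hat{\rho}^{i,R}(\kappa-4)/(2\kappa)$ --- is asserted rather than derived, and the Schramm--Wilson density contains pairwise interaction factors $|g_t(x^{i})-g_t(x^{j})|^{\rho^{i}\rho^{j}/(2\kappa)}$ and curve-dependent limits that do not visibly reorganize into the single product of conformal derivatives of Definition~\ref{def:sle-weight-confradius}.

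This is precisely why the paper avoids any global change of measure between forward laws with different weights. Instead it works with pairs of curves coupled in the same imaginary geometry: a commutation relation for $\wt{\SLE}_\kappa(\underline{\rho};\underline{\alpha})$ pairs (Proposition~\ref{prop:commutation}) lets one read off both conditional laws $\eta_1\mid\eta_2$ and $\eta_2\mid\eta_1$; Theorem~\hyperref[thm:dapeng]{B} plus this commutation yields the time reversal with one extra degenerate force point at $0^-$ (Lemma~\ref{lem:onesided+}); the resampling characterization (Lemmas~\ref{lem:resampling} and~\ref{lem:resampling-a}) then pins down the joint law of the reversed pair from the two conditional laws, and SLE duality handles $\kappa\in(4,8]$. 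If you want to salvage a density-based argument, you would at minimum have to localize (condition both processes away from the force points and off the boundary-hitting events, as the paper does with the sets $\overline{D}_\e$) and then remove the conditioning by a limiting argument --- at which point you are reconstructing the paper's proof rather than shortcutting it.
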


\begin{figure}
	\centering
	\includegraphics[scale=0.78]{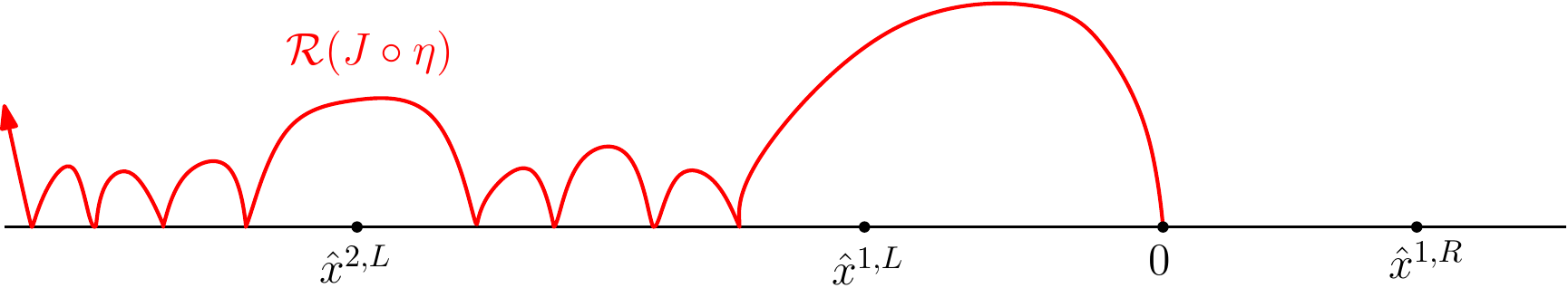}
	\caption{An example of Theorem~\ref{thm:main}. Let $\alpha^{i,q}=0$ in Figure~\ref{fig:reverse-def} so that $\eta$ is an ${\SLE}_\kappa(\rho^{0,L},\rho^{1,L}; \rho^{0,R},\rho^{1,R},\\\rho^{2,R})$ process with force points $0^-,x^{1,L},0^+,x^{1,R},x^{2,R}$. Let $J(z)=-1/z$, $\hat{x}^{1,L}=J(x^{2,R})$, $\hat{x}^{2,L}=J(x^{1,R})$, $\hat{x}^{1,R}=J(x^{1,L})$. Then the time reversal of $J\circ\eta$ is an  $\wt{\SLE}_\kappa(\rho^{0,R}+\rho^{1,R}+\rho^{2,R},-\rho^{2,R},-\rho^{1,R};\rho^{0,L}+\rho^{1,L},-\rho^{1,L};\frac{\rho^{2,R}(4-\kappa)}{2\kappa},\frac{\rho^{1,R}(4-\kappa)}{2\kappa};\frac{\rho^{1,L}(4-\kappa)}{2\kappa})$ process with force points $0^-,\hat{x}^{1,L},\hat{x}^{2,L},0^+,\hat{x}^{1,R}$. When $\rho^{1,L}=\rho^{1,R}=\rho^{2,R}=0$, the law of $J\circ\eta$ is ${\SLE}_\kappa(\rho^{0,R};\rho^{0,L}  )$ with force points at $0^-$ and $0^+$, as shown in~\cite{MS16b}.}
	\label{fig:reverse-thm}
\end{figure}

The $\kappa=4$ case of Theorem~\ref{thm:main} is covered by~\cite[Theorem 1.1.6]{wang2017level} by realizing $\SLE_4(\underline{\rho})$ curves as level lines of Gaussian free field with appropriate boundary conditions. In this case, the reversed curve is {just ${\SLE}_\kappa(\underline{\hat{\rho}})$ with no weighting, i.e.,  $\wt{\SLE}_\kappa(\underline{\hat{\rho}};\underline{\hat{\alpha}})$ with $\hat{\alpha}^{i,q}=0$.} 

Based on Theorem~\hyperref[thm:igII]{A} and Theorem~\hyperref[thm:dapeng]{B}, our proof is mainly relying on the techniques from the Imaginary Geometry~\cite{MS16a,MS16b,IGIII,MS17}. For $\kappa\in(0,4)$, we first extend a commutation relation between two $\SLE_\kappa(\underline{\rho})$-type  processes {(i.e., two $\SLE_\kappa(\underline{\rho})$ with possibly different $\underline{\rho}$ values)} from the theory of GFF flow lines to the setting of two $\wt{\SLE}_\kappa(\underline{\rho};\underline{\alpha})$-type processes (Proposition~\ref{prop:commutation}), from which we are able to add a force point located at $0^-$ in Theorem~\hyperref[thm:dapeng]{B} (Lemma~\ref{lem:onesided+}). Using this extended result with the commutation relation,  we can construct a pair of $\wt{\SLE}_\kappa(\underline{\rho};\underline{\alpha})$-type processes $(\eta_1,\eta_2)$, such that conditioned on one curve, the time reversal of the other curve is the ordinary $\SLE_\kappa(\underline{\rho})$ process with only one degenerate force point (i.e.\ $0^{\pm}$) on the left or right side. Then from the SLE resampling property~\cite[Theorem 4.1]{MS16b}, the two conditional laws uniquely characterize the joint law of the reversal of $(\eta_1,\eta_2)$, which finishes the proof for $\kappa\in(0,4)$. {For $\kappa\in(4,8]$}, we apply the $\kappa\in (0,4)$ result along with the SLE duality~\cite{zhan2008duality,Dub09,MS16a}, which states that for $\kappa>4$, the boundaries of $\SLE_\kappa$-type processes are $\SLE_{\frac{16}{\kappa}}$-type processes (see Proposition~\ref{prop:duality}). 

{For the $\kappa>8$ regime,  it has been shown in~\cite[Theorem 1.19]{MS17} that the time reversal of chordal $\SLE_\kappa(\rho^L;\rho^R)$ with force points at $0^-;0^+$ is $\SLE_\kappa(\tilde\rho^R;\tilde\rho^L)$ where $\rho^L,\rho^R\in (-2,\frac{\kappa}{2}-2)$ and $\tilde\rho^q = \frac{\kappa}{4}-2-\rho^q$ for $q\in\{L,R\}$. The time reversal of $\SLE_\kappa(\underline{\rho})$ is not known when $\kappa>8$ and there are force points located at $\bbR\backslash \{0\}$.}

We comment that the reversibility of $\SLE$ processes can also be {inferred from} the conformal welding of Liouville quantum gravity surfaces (see e.g.~\cite{DMS14, AHS20, ASY22}). For instance, by viewing the welding interface from the opposite direction, Theorem~\hyperref[thm:igII]{A} is a direct consequence of~\cite[Theorem 2.2]{AHS20}. The time reversal of $\SLE_\kappa(\rho^-;\rho^+,\rho_1)$ with force points $0^-;0^+,1$ has also been discussed in~\cite[Section 7.1]{ASY22} via the conformal welding of quantum triangles. We expect that this method can also be used to describe the time reversal of other types of SLE curves, such as radial SLE with force points and SLE on the annulus.

In Section~\ref{subsec:pre-ig}, we recap the $\SLE_\kappa(\underline{\rho})$ processes along with its coupling with the GFF as imaginary geometry flow lines in~\cite{MS16a}. In Section~\ref{subsec:pre-resample}, we establish  a commutation relation for $\wt{\SLE}_\kappa(\underline{\rho};\underline{\alpha})$ processes and recap the \emph{$\SLE$ resampling properties}. Finally in Section~\ref{sec:proof}, we prove Theorem~\ref{thm:main}.

\medskip


\section{Preliminaries}\label{sec:pre}

In this paper we work with non-probability measures and extend the terminology of ordinary probability to this setting. For a finite or $\sigma$-finite  measure space $(\Omega, \mathcal{F}, M)$, we say $X$ is a random variable if $X$ is an $\mathcal{F}$-measurable function with its \textit{law} defined via the push-forward measure $M_X=X_*M$. In this case, we say $X$ is \textit{sampled} from $M_X$ and write $M_X[f]$ for $\int f(x)M_X(dx)$. \textit{Weighting} the law of $X$ by $f(X)$ corresponds to working with the measure $d\tilde{M}_X$ with Radon-Nikodym derivative $\frac{d\tilde{M}_X}{dM_X} = f$, and \textit{conditioning} on some event $E\in\mathcal{F}$ (with $0<M[E]<\infty$) refers to the probability measure $\frac{M[E\cap \cdot]}{M[E]} $  over the space $(E, \mathcal{F}_E)$ with $\mathcal{F}_E = \{A\cap E: A\in\mathcal{F}\}$. 

Throughout this paper, for a continuous simple curve $\eta$ from 0 to $\infty$ in ${\bbH}\cup\bbR$, we shall refer to the {subset} of $\bbH\backslash\eta$ consisted of connected components whose boundaries contain a subinterval of $(-\infty,0)$ (resp.\ $(0,\infty)$) as the left (resp.\ right) {part} of $\bbH\backslash\eta$. For $n\ge0$, $\underline{x} = (x_0,...,x_n)\in\bbR^{n+1}$ and $a\in\bbR$, {we write $a+\underline{x}$ for $(a+x_0,x_1,...,x_n)$ and $a\underline{x}$ for $(ax_0,...,ax_n)$.}  {The formal notation is used for weights and latter is for the locations of force points under dilation and $\SLE$ duality purposes (see Proposition~\ref{prop:duality}).}

\subsection{$\SLE_\kappa(\underline{\rho})$ process and the imaginary geometry}\label{subsec:pre-ig}
Fix $\kappa>0$. We start with the $\SLE_\kappa$ process on the upper half plane $\bbH$. Let $(B_t)_{t\ge0}$ be the standard Brownian motion. The $\SLE_\kappa$ is the probability measure on continuously growing {curves $(K_t)_{t\ge0}$ in $\overline{\bbH}$}, {whose mapping out function $(g_t)_{t\ge0}$ (i.e., the unique conformal transformation from $\mathbb{H}\backslash K_t$ to $\mathbb{H}$ such that $\lim_{|z|\to\infty}|g_t(z)-z|=0$) can be described by}
\begin{equation}\label{eq:def-sle}
	g_t(z) = z+\int_0^t \frac{2}{g_s(z)-W_s}ds, \ z\in\mathbb{H},
\end{equation} 
where $W_t=\sqrt{\kappa}B_t$ is the Loewner driving function. For the force points $x^{k,L}<...<x^{1,L}<x^{0,L}= 0^-<x^{0,R}= 0^+< x^{1,R}<...<x^{\ell, R}$ and the weights $\rho^{i,q}\in\bbR$, the $\SLE_\kappa(\underline{\rho})$ process is the probability measure on {curves  $(K_t)_{t\ge 0}$  in $\overline{\bbH}$} growing the same as ordinary SLE$_\kappa$ (i.e, satisfies \eqref{eq:def-sle}) except that the Loewner driving function $(W_t)_{t\ge 0}$ are now characterized by 
\begin{equation}\label{eq:def-sle-rho}
	\begin{split}
		&W_t = \sqrt{\kappa}B_t+\sum_{q\in\{L,R\}}\sum_i \int_0^t \frac{\rho^{i,q}}{W_s-V_s^{i,q}}ds; \\
		& V_t^{i,q} = x^{i,q}+\int_0^t \frac{2}{V_s^{i,q}-W_s}ds, \ q\in\{L,R\}.
	\end{split}
\end{equation}
It has been proved in \cite{MS16a} that the SLE$_\kappa(\underline{\rho})$ process a.s. exists, is unique and generates a continuous curve until the \textit{continuation threshold}, the first time $t$ such that $W_t = V_t^{j,q}$ with $\sum_{i=0}^j\rho^{i,q}\le -2$ for some $j$ and $q\in\{L,R\}$. 

Now we recap the definition of the Gaussian Free Field. Let $D\subsetneq \mathbb{C}$ be a domain. We construct the GFF on $D$ with \textit{Dirichlet} \textit{boundary conditions}  as follows. Consider the space of smooth functions on $D$ with finite Dirichlet energy and zero value near $\partial D$, and let $H(D)$ be its closure with respect to the inner product $(f,g)_\nabla=\int_D(\nabla f\cdot\nabla g)\ dx\ dy$. Then the (zero boundary) GFF on $D$ is defined by 
\begin{equation}\label{eqn-def-gff}
	h = \sum_{n=1}^\infty \xi_nf_n
\end{equation}
where $(\xi_n)_{n\ge 1}$ is a collection of i.i.d. standard Gaussians and $(f_n)_{n\ge 1}$ is an orthonormal basis of $H(D)$. The sum \eqref{eqn-def-gff} a.s.\ converges to a random distribution independent of the choice of the basis $(f_n)_{n\ge 1}$. For a function $g$ defined on $\partial D$ with harmonic extension $f$ in $D$ and a zero boundary GFF $h$, we say that $h+f$ is a GFF on $D$ with boundary condition specified by $g$. 
See \cite[Section 4.1.4]{DMS14} for more details. 

Next we introduce the notion of \emph{GFF flow lines}. We restrict ourselves to the range $\kappa\in(0,4)$.  Heuristically, given a GFF $h$, $\eta(t)$ is a flow line of angle $\theta$ if
\begin{equation}
	\eta'(t) = e^{i(\frac{h(\eta(t))}{\chi}+\theta)}\ \text{for}\ t>0, \ \text{where}\ \chi = \frac{2}{\sqrt{\kappa}}-\frac{\sqrt{\kappa}}{2}.
\end{equation} 
To be more precise, let $(K_t)_{t\ge 0}$ be the hull at time $t$ of the SLE$_\kappa(\underline{\rho})$ process described by the Loewner flow \eqref{eq:def-sle} with $(W_t, V_t^{i,q})$ solving \eqref{eq:def-sle-rho}, and let $\mathcal{F}_t$ be the filtration generated by  $(W_t, V_t^{i,q})$. 
Let $\mathfrak{h}_t^0$ be the bounded harmonic function on $\mathbb{H}$ with boundary values 
\begin{equation}\label{eq:flowlinecouple}
	-\lambda(1+\sum_{i=0}^j \rho^{i,L})\  \text{on} \ (V_t^{j+1, L}, V_t^{j,L}),\ \ \ \text{and}\ \ \lambda(1+\sum_{i=0}^j \rho^{i,R})\ \text{on}\ \ (V_t^{j, R}, V_t^{j+1,R})
\end{equation}
and {$-\lambda$ on $(V_t^{0,L},W_t)$,  $\lambda$ on $(W_t,V_t^{0,R})$} where $\lambda = \frac{\pi}{\sqrt{\kappa}}$, $x^{k+1, L} = -\infty, x^{\ell+1, R} = +\infty$. Set {$\mathfrak{h}_t(z) = \mathfrak{h}_t^0(g_t(z))-\chi\arg g_t'(z)$}. Let  $\tilde{h}$ be a zero boundary GFF on $\mathbb{H}$ and 
\begin{equation}\label{eq:ig-gff}
	h = \tilde{h}+ \mathfrak{h}_0.  
\end{equation}
Then as proved in \cite[Theorem 1.1]{MS16a}, there exists a coupling between $h$ and the  SLE$_\kappa(\underline{\rho})$ process $(K_t)$, such that for any $\mathcal{F}_t$-stopping time $\tau$ before the continuation threshold, $K_\tau$ is a local set for $h$ and the conditional law of $h|_{\mathbb{H}\backslash K_\tau}$ given $\mathcal{F}_\tau$ is the same as the law of {$\mathfrak{h}_\tau+\tilde{h}\circ g_\tau$}.

For $\kappa<4$, the SLE$_\kappa(\underline{\rho})$ coupled with the GFF $h$ as above is referred as a \emph{flow line} of $h$ from 0 to $\infty$, and we say an SLE$_\kappa(\underline{\rho})$ curve is a flow line of angle $\theta$ if it can be coupled with $h+\theta\chi$ in the above sense. For $\kappa'>4$, the SLE$_{\kappa'}(\underline{\rho})$ curve coupled with a GFF $-h$ as above is referred as a \text{counterflow lines} of $h$. 

\begin{figure}
	\centering
	\includegraphics[width=1\textwidth]{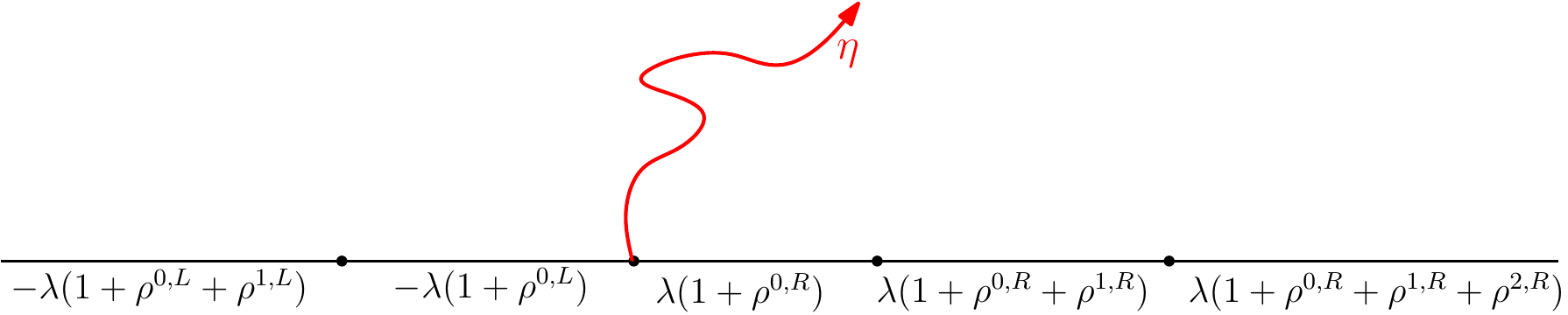}
	\caption{An $\SLE_\kappa(\rho^{0,L},\rho^{1,L};\rho^{0,R},\rho^{1,R},\rho^{2,R})$ process coupled with the GFF $h$ with illustrated boundary conditions as the (zero angle) flow line of $h$. The $\theta$ angle flow line of $h$ then has the law as $\SLE_\kappa(\rho^{0,L}-\frac{\theta\chi}{\lambda},\rho^{1,L};\rho^{0,R}+\frac{\theta\chi}{\lambda},\rho^{1,R},\rho^{2,R})$ process.}
	\label{fig:igflowline}
\end{figure}

So far we have discussed $\SLE_\kappa(\underline{\rho})$ processes on the upper half plane, and for general simply connected domains, the definition can be extended via conformal mappings. Namely,  let $x,y\in\partial D$, $\underline{\tilde{x}}\subset\partial D$ be the force points and $\psi:D\to\bbH$ be a conformal map with $\psi(x) = 0, \psi(y)=\infty$. Then a sample from the chordal $\SLE_\kappa(\underline{\rho})$ process in $D$ from $x$ to $y$ is obtained by first taking an curve $\eta$ from $\SLE_\kappa(\underline{\rho})$ with force points $\psi(\underline{\tilde{x}})$ and then output $\psi^{-1}(\eta)$. {Observe that the term $x^{i,q}\cdot(\psi_\eta^{i,q})'(x^{i,q})$ in~\eqref{eq:def-sle-weight-confradius} is invariant under dilations of $\bbH$, which implies that for $a>0$ and an $\wt{\SLE}_\kappa(\underline{\rho};\underline{\alpha})$ process $\eta$ with force points $\underline{x}$, the law of $\psi\circ\eta$ is $\wt{\SLE}_\kappa(\underline{\rho};\underline{\alpha})$ with force points $a\underline{x}$. This implies that the} notion of $\wt{\SLE}_\kappa(\underline{\rho};\underline{\alpha})$ can also be extended to general simply connected domains by the same way. Moreover, if $\eta$ is a flow line of some GFF $h$, then $\psi^{-1}(\eta)$ is the flow line of $h\circ \psi-\chi\arg \psi'$ in $D$ from $\psi^{-1}(0)$ to $\psi^{-1}(\infty)$. 

{To simplify our language, we are going to extend the notion of $\wt{\SLE}_\kappa(\underline{\rho};\underline{\alpha})$ processes {to} certain non-simply connected domains. 
	Let $D\subset\mathbb{C}$ be some domain and $x,y\in\partial D$, such that the boundary $\partial D$ consist of two non-crossing simple curves $\eta_D^L,\eta_D^R$ running from $x$ to $y$ which possibly intersect and bounce-off each other. Let { $\underline{x}^L:=[(x^{k,L},...,x^{1,L},x^{0,L})]\subset\eta_D^L$ and $\underline{x}^R:=[(x^{0,R},x^{1,R},...,x^{\ell,R})]\subset\eta_D^R$} with $x^{0,L}=x^-$ and $x^{0,R}=x^+$, such that for $i\ge 1$ and $q\in\{L,R\}$, none of the $x^{i,q}$'s lies on $\eta_D^L\cap\eta_D^R$. Further assume $\eta_D^L$ visits $\underline{x}^L$ in the order of $x^{0,L},...,x^{k,L}$, and $\eta_D^R$ visits $\underline{x}^R$ in the order of $x^{0,R},...,x^{\ell,R}$.  On each connected component $\tilde{D}$ of $D$, let $x_{\tilde{D}}$ (resp.\ $y_{\tilde{D}}$) be the first (resp.\ last) point on $\partial \tilde{D}$ traced by $\eta_D^L$, and let $i_{\tilde{D}}^L$ and $j_{{\tilde{D}}}^L$ (resp.\ $i_{\tilde{D}}^R$ and $j_{{\tilde{D}}}^R$) be the largest and smallest integer such that $\partial \tilde{D}\cap\eta_D^L$ (resp.\ $\partial \tilde{D}\cap\eta_D^R$) is between $x^{i_{\tilde{D}},L}$ and $x^{j_{\tilde{D}},L}$ (resp.\ $x^{i_{\tilde{D}},R}$ and $x^{j_{\tilde{D}},R}$). 
	{Let $\mu_{\tilde D}$ be the measure $\wt{\SLE}_\kappa(\sum_{i=0}^{i_{\tilde{D}}^L}\rho^{i,L},\rho^{i_{\tilde{D}}^L+1,L}, ..., \rho^{j_{\tilde{D}}^L-1,L}; \sum_{i=0}^{i_{\tilde{D}}^R}\rho^{i,R},\rho^{i_{\tilde{D}}^R+1,R}, ..., \rho^{j_{\tilde{D}}^R-1,R};\alpha^{i_{\tilde{D}}^L+1,L}, ..., \alpha^{j_{\tilde{D}}^L-1,L};\\\alpha^{i_{\tilde{D}}^R+1,R}, ..., \alpha^{j_{\tilde{D}}^R-1,R})$ in $\eta_{\tilde{D}}$ for curves running from $x_{\tilde{D}}$ to $y_{\tilde{D}}$ with force points $x_{\tilde{D}}^-,x^{i_{\tilde{D}}^L+1,L},\\ ..., x^{j_{\tilde{D}}^L-1,L};x_{\tilde{D}}^+,x^{i_{\tilde{D}}^R+1,R}, ..., x^{j_{\tilde{D}}^R-1,R}$. Sample $(\eta_{\tilde D})_{\tilde D}$ from the product measure $\prod_{\tilde D}\mu_{\tilde D}$.} Concatenate all the $\eta_{\tilde{D}}$'s, and define the law of this curve from $x$ to $y$ in by $\wt{\SLE}_\kappa(\underline{\rho};\underline{\alpha})$ in $D$ with force points $\underline{x}$.}

{We remark that our definition above is natural in the following sense. Temporarily assume $\underline{\alpha}$ is zero. Let $\psi^L:\mathbb{C}\backslash\eta_D^L\to\mathbb{C}\backslash (-\infty,0)$ and $\psi^R:\mathbb{C}\backslash\eta_D^R\to\mathbb{C}\backslash (0,\infty)$ be the  conformal maps sending $x$ to 0 and $y$ to $\infty$. Let $V_0^{i,L} = \psi^L(x^{i,L})$ and $V_0^{i,R}= \psi^R(x^{i,R})$. Consider a GFF $h$ on $D$ with boundary conditions such that $h\circ(\psi^L)^{-1}-\chi\arg((\psi^{L})^{-1})'$ agrees with~\eqref{eq:flowlinecouple} on $(-\infty,0)$ and  $h\circ(\psi^R)^{-1}-\chi\arg((\psi^{R})^{-1})'$ agrees with ~\eqref{eq:flowlinecouple} on $(0,\infty)$ {with $t=0$}. In each connected component $\tilde{D}$ construct the flow line $\eta_{\tilde{D}}$ of $h$ from $x_{\tilde{D}}$ to $y_{\tilde{D}}$, and the $\SLE_\kappa(\underline{\rho
	})$ process in $D$ can be understood as the concatenation of all the $\eta_{\tilde{D}}$'s. For non-zero $\underline{\alpha}$ we can further weight by the corresponding conformal derivatives.   } 

The $\SLE_\kappa(\underline{\rho})$ curve $\eta$ satisfies the following Domain Markov property. Let $\tau$ be some stopping time for $\eta$. On the event that $\tau$ is less than the continuation threshold, the conditional law of $\eta(t+\tau)_{t\ge0}$ given $\eta([0,\tau])$ is an $\SLE_\kappa(\underline{\rho})$ on $\bbH\backslash K_\tau$ with force points $\underline{x_\tau}$, where $x_\tau^{i,L} = \inf\{ x:x\in\{x^{i,L}\cup (\eta([0,\tau])\cap\bbR)\}\}$ and $x_\tau^{i,R} = \sup\{ x:x\in\{x^{i,R}\cup (\eta([0,\tau])\cap\bbR)\}\}$, and if two force points $x^{i,q}$ and $x^{j,q}$ are equal, they could be merged into a single force point of weight $\rho^{i,q}+\rho^{j,q}$. 

	
	

\subsection{The coupling of the two flow lines}\label{subsec:pre-resample}
One important implication of the flow line coupling of SLE and the GFF is that, for two $\SLE_\kappa(\underline{\rho})$ processes $\eta_1$ and $\eta_2$ coupled within the same imaginary geometry, one can easily read off the conditional laws of $\eta_1$ given $\eta_2$ and $\eta_2$ given $\eta_1$. Suppose $\eta_1$ and $\eta_2$ are flow lines of $h$, then given $\eta_1$, the conditional law of $\eta_2$ is the same as the law of the flow line (with some angle) of the GFF in $\mathbb{H}\backslash\eta_1$ with the \textit{flow line boundary conditions} (see \cite[Figure 1.10]{MS16a} for more explanation) induced by $\eta_1$, and vice versa for the law of $\eta_1$ given $\eta_2$.

Now we state the following commutation relation between $\wt{\SLE}_\kappa(\underline{\rho};\underline{\alpha})$ processes. See Figure~\ref{fig:commutation} for an illustration. {Suppose $(\Omega,\mathcal{F})$ is a $\sigma$-finite measure space and $X:\Omega\to A$ is a random variable with law $\mu$. Also suppose $(\nu_x)_{x\in A}$ is a family of $\sigma$-finite measures on $(\Omega,\mathcal{F})$. By first sampling $X$ from $\mu$ and then $Y$ from $\nu_X$, we refer to a sample $(X,Y)$ from the measure $\nu_x(dy)\mu(dx)$ on $(\Omega,\mathcal{F})$.}
\begin{proposition}\label{prop:commutation}
	Let $\kappa\in(0,4)$. Fix $x^{k,L}<...<x^{1,L}<x^{0,L}= 0^-<x^{0,R}= 0^+< x^{1,R}<...<x^{\ell, R}$, $\rho^{i,q}\in\bbR$, $\alpha^{i,q}\in\bbR$ for $q\in\{L,R\}$ and $\rho>-2$. Let $\tilde{\rho} = (\rho+2+\underline{\rho}^L;-\rho-2+\underline{\rho}^R)$.  Suppose that $\underline{\rho},\underline{\tilde{\rho}}$ both satisfy~\eqref{eq:continuation-threshold}.  
	The following three laws on pairs of curves $(\eta_1, \eta_2)$ agree:
	\begin{itemize}
		\item Sample $\eta_1$ in $\bbH$ from $0$ to $\infty$ as $\wt{\SLE}_\kappa(\underline{\rho};\underline{\alpha})$ with force points $\underline{x}$. Then sample an $\wt{\SLE}_\kappa(\rho;\underline{\tilde{\rho}}^R;{0;\underline{\alpha}^R})$ process $\eta_2$ in the right part of $\bbH\backslash\eta_1$  with force points $(0^-;\underline{x}^R)$;
		\item Sample $\eta_2$ in $\bbH$ from $0$ to $\infty$ as $\wt{\SLE}_\kappa(\underline{\tilde{\rho}};\underline{\alpha})$ with force points $\underline{x}$. Then sample an $\wt{\SLE}_\kappa(\underline{\rho}^L;\rho;{\underline{\alpha}^L;0})$ process $\eta_1$ in the left part of $\bbH\backslash\eta_2$  with force points $(\underline{x}^L;0^+)$;
		{\item Sample $\eta_1$ in $\bbH$ from $0$ to $\infty$ as ${\SLE}_\kappa(\underline{\rho})$ with force points $\underline{x}$. Then sample an ${\SLE}_\kappa(\rho;\underline{\tilde{\rho}}^R)$ process in the right part of $\bbH\backslash\eta_1$ with force points $(0^-;\underline{x}^R)$. {For $i\ge 1$ and $j=1,2$}, let $D_{\eta_j}^{i,q}$ be the connected component of $\bbH\backslash\eta_j$ with $x^{i,q}$ on the boundary, $\sigma_{\eta_j}^{i,q}$ (resp.\ $\xi_{\eta_j}^{i,q}$) be the first (resp.\ last) point on $\partial D_{\eta_j}^{i,q}$ traced by $\eta_j$, and $\psi_{\eta_j}^{i,q}:D_{\eta_j}^{i,q}\to\bbH$ be the conformal map sending $(\sigma_{\eta_j}^{i,q},x^{i,q},\xi_{\eta_j}^{i,q})$ to $(0,\pm 1,\infty)$ where we{take the + sign when $q=R$ and the - sign when $q=L$}. Now weight the law of $(\eta_1,\eta_2)$ by 
			$$\prod_{i=1}^k\left|x^{i,L}\cdot(\psi_{\eta_1}^{i,L})'(x^{i,L})\right|^{\alpha^{i,L}}\cdot \prod_{i=1}^\ell \left|x^{i,R}\cdot(\psi_{\eta_2}^{i,R})'(x^{i,R})\right|^{\alpha^{i,R}}.$$
		}
	\end{itemize}
\end{proposition}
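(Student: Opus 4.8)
The plan is to prove the equality of the three laws by exhibiting each one as a single common ``unweighted'' joint law, reweighted by one and the same function of the pair $(\eta_1,\eta_2)$. Concretely, set
\[
W(\eta_1,\eta_2) = \prod_{i=1}^{k}\bigl|x^{i,L}(\psi_{\eta_1}^{i,L})'(x^{i,L})\bigr|^{\alpha^{i,L}}\cdot \prod_{i=1}^{\ell}\bigl|x^{i,R}(\psi_{\eta_2}^{i,R})'(x^{i,R})\bigr|^{\alpha^{i,R}},
\]
which is exactly the weight in the third item. I will show that the first and third items share the \emph{same} base measure (sample $\eta_1\sim\SLE_\kappa(\underline{\rho})$, then $\eta_2\sim\SLE_\kappa(\rho;\underline{\tilde\rho}^R)$ in the right part), and differ only in how $W$ is split between the two curves; and that the base of the second item agrees with this one by the unweighted flow-line commutation, again reweighted by the same $W$. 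The technical heart is a chain-rule (cocycle) identity for the conformal-derivative factors in~\eqref{eq:def-sle-weight-confradius}.

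First I would record the unweighted commutation, i.e.\ the case $\underline{\alpha}\equiv 0$, which is the ``commutation relation from the theory of GFF flow lines'' to be extended. Realize $\eta_1$ as the angle-$0$ flow line from $0$ to $\infty$ of a GFF $h$ with the boundary data of~\eqref{eq:flowlinecouple} producing $\SLE_\kappa(\underline{\rho})$, and let $\eta_2$ be the flow line of $h$ at angle $\theta=-(\rho+2)\lambda/\chi$. Since $\rho>-2$ and $\chi>0$ for $\kappa\in(0,4)$, we have $\theta<0$, so $\eta_2$ lies to the right of $\eta_1$; its marginal law is $\SLE_\kappa(\underline{\tilde\rho})$ with $\underline{\tilde\rho}=(\rho+2+\underline{\rho}^L;-\rho-2+\underline{\rho}^R)$, matching the angle shift $\rho^{0,L}\mapsto\rho^{0,L}-\theta\chi/\lambda$, $\rho^{0,R}\mapsto\rho^{0,R}+\theta\chi/\lambda$. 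By the flow-line interaction rules of~\cite{MS16a}, conditionally on $\eta_1$ the curve $\eta_2$ is a flow line in the right part of $\bbH\backslash\eta_1$ with law $\SLE_\kappa(\rho;\underline{\tilde\rho}^R)$ and force points $(0^-;\underline{x}^R)$, and symmetrically conditionally on $\eta_2$ the curve $\eta_1$ is $\SLE_\kappa(\underline{\rho}^L;\rho)$ in the left part of $\bbH\backslash\eta_2$ with force points $(\underline{x}^L;0^+)$. This is precisely the equality of the three items when $\underline{\alpha}\equiv 0$.

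Next I would establish the cocycle identity. Fix a right force point $x^{i,R}$, let $D_{\eta_1}^{i,R}$ be its component in $\bbH\backslash\eta_1$, and let $\phi\colon D_{\eta_1}^{i,R}\to\bbH$ be a conformal map sending the first and last points of $\partial D_{\eta_1}^{i,R}$ traced by $\eta_1$ to $0,\infty$, so that $\psi_{\eta_1}^{i,R}=\phi/\phi(x^{i,R})$. Because $\eta_2$ lies to the right of $\eta_1$, the component of $\bbH\backslash\eta_2$ containing $x^{i,R}$ coincides with the component of $D_{\eta_1}^{i,R}\backslash\eta_2$ containing $x^{i,R}$; writing $\psi_{\eta_2}^{i,R}=\tilde\psi\circ\phi$ with $\tilde\psi$ the three-point normalized map for $\phi(\eta_2)$ at $\phi(x^{i,R})$, the chain rule together with the dilation-invariance of $x\mapsto|x\,\psi'(x)|$ noted after Definition~\ref{def:sle-weight-confradius} gives
\[
\bigl|x^{i,R}(\psi_{\eta_2}^{i,R})'(x^{i,R})\bigr| = \bigl|x^{i,R}(\psi_{\eta_1}^{i,R})'(x^{i,R})\bigr|\cdot \mathfrak{d}^{i,R}, \qquad \mathfrak{d}^{i,R}:=\bigl|\phi(x^{i,R})\,\tilde\psi'(\phi(x^{i,R}))\bigr|.
\]
The key point is that $\mathfrak{d}^{i,R}$ is exactly the factor of~\eqref{eq:def-sle-weight-confradius} contributed by the $\wt{\SLE}$-weighting of $\eta_2$ when $\eta_2$ is sampled in the right part of $\bbH\backslash\eta_1$. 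An identical statement holds on the left with the roles of $\eta_1,\eta_2$ exchanged, and, crucially, since $\eta_2$ lies entirely in the right part the left force points are untouched by it, so the final-domain map for each $x^{i,L}$ equals $\psi_{\eta_1}^{i,L}$ (and symmetrically for the right points in the second item).

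Finally I would assemble the weights. In the first item the Radon--Nikodym derivative of $\eta_1\sim\wt{\SLE}_\kappa(\underline{\rho};\underline{\alpha})$ against $\SLE_\kappa(\underline{\rho})$ is $\prod_i|x^{i,L}(\psi_{\eta_1}^{i,L})'|^{\alpha^{i,L}}\prod_i|x^{i,R}(\psi_{\eta_1}^{i,R})'|^{\alpha^{i,R}}$, while that of $\eta_2$ given $\eta_1$ is $\prod_i(\mathfrak{d}^{i,R})^{\alpha^{i,R}}$; multiplying and invoking the cocycle identity telescopes the $\psi_{\eta_1}^{i,R}$-factors away, leaving exactly $W(\eta_1,\eta_2)$. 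Hence the first item equals its base reweighted by $W$, and since the third item reweights the same base by $W$, the two coincide. For the second item the same computation with left and right interchanged shows its weight also telescopes to $W$; combined with the unweighted commutation, which identifies its base with the common base, the second law equals the third. The step I expect to be the main obstacle is precisely this cocycle identity together with the bookkeeping of domains: I must verify the identification of the relevant connected components, handle configurations where $\eta_1$ or $\eta_2$ bounces off $\bbR$ and produces several components (in accordance with the extension of $\wt{\SLE}_\kappa(\underline{\rho};\underline{\alpha})$ to non-simply-connected domains in Section~\ref{subsec:pre-ig}), and confirm that all disintegrations are well defined in this $\sigma$-finite, non-probability setting.
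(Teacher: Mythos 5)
Your proposal is correct and follows essentially the same route as the paper: first the unweighted ($\underline{\alpha}\equiv 0$) commutation via GFF flow lines at angles $0$ and $-(\rho+2)\lambda/\chi$, then a chain-rule identity for the normalized maps $\psi_{\eta_2}^{i,R}=\psi_{\eta_2|\eta_1}^{i,R}\circ\psi_{\eta_1}^{j,R}$ that telescopes the Radon--Nikodym derivatives of the first and second constructions against the common base into the single weight $W(\eta_1,\eta_2)$ of the third. Your ``cocycle identity'' is exactly the paper's computation $(\psi_{\eta_2}^{i,R})'(x^{i,R})=(\psi_{\eta_2|\eta_1}^{i,R})'(y_{\eta_1}^{i,R})\cdot y_{\eta_1}^{i,R}\cdot(\psi_{\eta_1}^{i,R})'(x^{i,R})$, including the correct normalization of $\psi_{\eta_1}^{i,R}$ by the image of $x^{i,R}$ when several force points share a complementary component.
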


\begin{figure}
	\centering
	\includegraphics[width=0.95\textwidth]{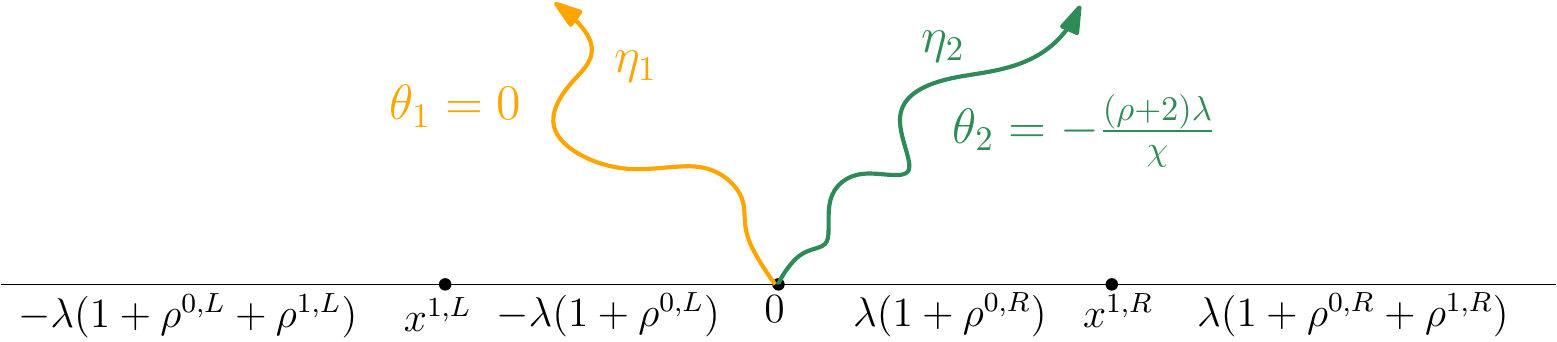}
	\caption{Let $h$ be the GFF with illustrated boundary conditions and $(\eta_1,\eta_2)$ be the angle $(0,-\frac{(\rho+2)\lambda}{\chi})$ flow lines of $h$. Then conditioned on $\eta_1$, $\eta_2$ is an $\SLE_\kappa(\rho;-\rho-2+\rho^{0,R},\rho^{1,R})$ process, and conditioned on $\eta_2$, $\eta_1$ is an $\SLE_\kappa(\rho^{0,L},\rho^{1,L};\rho)$ process. By Lemma~\ref{lem:resampling}, these two conditional laws uniquely characterize the joint law $(\eta_1,\eta_2)$. In Proposition~\ref{prop:commutation}, we  prove that once we weight the law of $(\eta_1,\eta_2)$ by $|x^{1,L}(\psi_{\eta_1}^{1,L})'(x^{1,L})|^{\alpha^{1,L}}\cdot |x^{1,R}(\psi_{\eta_2}^{1,R})'(x^{1,R})|^{\alpha^{1,R}}$, a sample of $(\eta_1,\eta_2)$ can be produced by first sampling an $\wt{\SLE}_\kappa(\rho^{0,L},\rho^{1,L};\rho^{0,R},\rho^{1,R};\alpha^{1,L};\alpha^{1,R})$ process $\eta_1$ and then an $\wt{\SLE}_\kappa(\rho;-\rho-2+\rho^{0,R},\rho^{1,R};0;\alpha^{1,R})$ curve $\eta_2$ in the right part of $\bbH\backslash\eta_1$ (which possibly induces a weighting on the law of $\eta_1$), or equivalently first sampling an $\wt{\SLE}_\kappa(\rho+2+\rho^{0,L},\rho^{1,L};-\rho-2+\rho^{0,R},\rho^{1,R};\alpha^{1,L};\alpha^{1,R})$ process $\eta_2$ and then an $\wt{\SLE}_\kappa(\rho^{0,L},\rho^{1,L};\rho;\alpha^{1,L};0)$ process $\eta_1$ in the left part of $\bbH\backslash\eta_2$.}
	\label{fig:commutation}
\end{figure}

{We remark that the topological configuration of the two curves $(\eta_1,\eta_2)$ could be rather complicated, as they may intersect other, and both intersect the boundaries $(-\infty,0)$ and $(0,\infty)$, and we shall apply the definition of $\wt{\SLE}_\kappa(\underline{\rho};\underline{\alpha})$ processes in non-simply connected domains as specified in the previous section.}
\begin{proof}
	If $\alpha^{i,q}=0$ for all possible $i$ and $q$, then as argued in~\cite[Section 6]{MS16a}, the pairs $(\eta_1,\eta_2)$ generated from the three ways can all be realized as  sampling the angle $(\theta_1,\theta_2)=(0,-\frac{(\rho+2)\lambda}{\chi})$ flow lines of the GFF with boundary conditions as~\eqref{eq:flowlinecouple} and~\eqref{eq:ig-gff}  and therefore the claim follows. Let $\mathcal{P}$ be the corresponding law of these two flow lines, which agrees with the law of $(\eta_1,\eta_2)$ constructed as in the third way before we do the weighting.
	
	Now for general $\alpha^{i,q}$ we let $\mathcal{L}$ be the law on the pairs $(\eta_1,\eta_2)$ constructed as in the first way of the statement (i.e., first sample $\eta_1$ and then $\eta_2$). For each $1\le i\le\ell$, let $j_{\eta_1}^{i,R}$ be the smallest $j\ge 1$ such that $x^{j,R}\in D_{\eta_1}^{i,R}$, and let $y_{\eta_1}^{i,R} = \psi_{\eta_1}^{j_{\eta_1}^{i,R},R}(x^{i,R})$. Then by definition of the $\wt{\SLE}_\kappa(\underline{\rho};\underline{\alpha})$ processes, we have
	\begin{equation}\label{eq:prop-commutation-1}
		\frac{d\mathcal{L}}{d\mathcal{P}}(\eta_1,\eta_2) = \left(\prod_{q\in\{L,R\}}\prod_{i\ge1}|x^{i,q}\cdot(\psi_{\eta_1}^{i,q})'(x^{i,q})|^{\alpha^{i,q}}\right)\cdot \prod_{i=1}^\ell \left|y_{\eta_1}^{i,R}\cdot(\psi_{\eta_2|\eta_1}^{i,R})'(y_{\eta_1}^{i,R})\right|^{\alpha^{i,R}}
	\end{equation}
	where $\psi_{\eta_2|\eta_1}^{i,R}$ is the corresponding conformal map $\psi_{\eta_1}^{j_{\eta_1}^{i,R},R}(D_{\eta_2}^{i,R})\to\bbH$ with $(\psi_{\eta_1}^{j_{\eta_1}^{i,R},R}(\sigma_{\eta_2}^{i,R}),\\y_{\eta_1}^{i,R},   \psi_{\eta_1}^{j_{\eta_1}^{i,R},R}(\xi_{\eta_2}^{i,R}))$  mapped to $(0,1,\infty)$. Now we observe that $\psi_{\eta_2}^{i,R} = \psi_{\eta_2|\eta_1}^{i,R}\circ\psi_{\eta_1}^{j_{\eta_1}^{i,R},R}$, and by definition we have $$\psi_{\eta_1}^{i,R}(z) = \frac{\psi_{\eta_1}^{j_{\eta_1}^{i,R},R}(z)}{\psi_{\eta_1}^{j_{\eta_1}^{i,R},R}(x^{i,R})}=\frac{\psi_{\eta_1}^{j_{\eta_1}^{i,R},R}(z)}{y_{\eta_1}^{i,R}}$$
	which implies that 
	$$(\psi_{\eta_2}^{i,R})'(x^{i,R}) = (\psi_{\eta_2|\eta_1}^{i,R})'(\psi_{\eta_1}^{j_{\eta_1}^{i,R},R}(x^{i,R}))\cdot (\psi_{\eta_1}^{j_{\eta_1}^{i,R},R})'(x^{i,R}) = (\psi_{\eta_2|\eta_1}^{i,R})'(y_{\eta_1}^{i,R})\cdot y_{\eta_1}^{i,R}\cdot (\psi_{\eta_1}^{i,R})'(x^{i,R})$$
	and therefore~\eqref{eq:prop-commutation-1} can be rewritten as
	\begin{equation}\label{eq:prop-commutation-2}
		\frac{d\mathcal{L}}{d\mathcal{P}}(\eta_1,\eta_2) = \prod_{i=1}^k\left|x^{i,L}\cdot(\psi_{\eta_1}^{i,L})'(x^{i,L})\right|^{\alpha^{i,L}}\cdot \prod_{i=1}^\ell \left|x^{i,R}\cdot(\psi_{\eta_2}^{i,R})'(x^{i,R})\right|^{\alpha^{i,R}}.
	\end{equation}
	Using a similar argument, one can show that if we let $\tilde{\mathcal{L}}$ be    the law on the pairs $(\eta_1,\eta_2)$ constructed as in the second way of the statement, then $\frac{d\tilde{\mathcal{L}}}{d\mathcal{P}}$ is also the same as~\eqref{eq:prop-commutation-2}. Therefore the claim follows.
\end{proof}

Proposition~\ref{prop:commutation} gives three equivalent ways to characterize the joint law of $(\eta_1,\eta_2)$. {On the other hand,}  at least when $\alpha^{i,q}=0$, the two conditional laws $\eta_1|\eta_2$ and $\eta_2|\eta_1$ as in Proposition~\ref{prop:commutation} uniquely determines the joint law of $(\eta_1,\eta_2)$.
\begin{lemma}\label{lem:resampling}
	Let $\kappa\in(0,4)$, $\underline{\rho},\rho,\underline{x}$ be the same as Proposition~\ref{prop:commutation}. Suppose $(\eta_1,\eta_2)$ are random non-crossing curves in $\bbH$ from 0 to $\infty$ sampled from some probability measure, such that conditioned on $\eta_1$, $\eta_2$ is an ${\SLE}_\kappa({\rho;\underline{\tilde\rho}^R})$  in the right part of $\bbH\backslash\eta_1$, and conditioned on $\eta_2$, $\eta_1$ is an ${\SLE}_\kappa(\underline{\rho}^L;{\rho})$ in the left part of $\bbH\backslash\eta_2$. Then the joint law of $(\eta_1,\eta_2)$ is the same as in Proposition~\ref{prop:commutation} with $\alpha^{i,q}=0$ for all $i,q$.
\end{lemma}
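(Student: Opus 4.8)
The plan is to derive the lemma from the $\SLE$ resampling property of~\cite[Theorem 4.1]{MS16b}, which is exactly a uniqueness statement for a joint law prescribed through its two conditionals. First I would record that a measure with the required conditional laws exists and is the one we want: by Proposition~\ref{prop:commutation} with $\underline{\alpha}=0$, the law $\mathcal{P}$, realised as the angle $(0,-\frac{(\rho+2)\lambda}{\chi})$ flow line pair of a GFF with boundary data \eqref{eq:flowlinecouple}--\eqref{eq:ig-gff}, has conditional laws $\eta_2\mid\eta_1\sim\SLE_\kappa(\rho;\underline{\tilde{\rho}}^R)$ in the right part of $\bbH\backslash\eta_1$ with force points $(0^-;\underline{x}^R)$, and $\eta_1\mid\eta_2\sim\SLE_\kappa(\underline{\rho}^L;\rho)$ in the left part of $\bbH\backslash\eta_2$ with force points $(\underline{x}^L;0^+)$. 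These are precisely the two conditional laws in the hypothesis (the first and second ``sample in order'' descriptions of Proposition~\ref{prop:commutation}, where with $\underline{\alpha}=0$ no weighting is incurred), so $\mathcal{P}$ is one measure with the stated conditionals, and the whole content of the lemma is that it is the only one.

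For uniqueness I would set up the two conditional laws as resampling kernels: $K_1$ resamples $\eta_1$ from its conditional law given $\eta_2$, and $K_2$ resamples $\eta_2$ from its conditional law given $\eta_1$. By construction both $K_1$ and $K_2$ preserve $\mathcal{P}$, and they also preserve any probability measure $\mu$ satisfying the hypotheses, since the defining property of such a $\mu$ is exactly invariance under $K_1$ and $K_2$. The resampling property~\cite[Theorem 4.1]{MS16b} asserts that $\mathcal{P}$ is the unique law invariant under this pair of kernels, whence $\mu=\mathcal{P}$. The underlying mechanism is the coalescence of the alternating dynamics (apply $K_1$, then $K_2$, then $K_1$, and so on) started from two initial configurations --- one distributed as $\mu$, one as $\mathcal{P}$ --- under a common source of randomness, using the monotonicity of flow lines in their boundary data (GFF monotonicity, \cite{MS16a}) to force the two trajectories to agree.

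The main obstacle is checking that \cite[Theorem 4.1]{MS16b}, whose base case concerns the purely degenerate configuration, applies to the present setting with the extra force points $x^{i,q}$, $i\ge 1$, which may force the curves to touch the boundary. Three points must be verified: (i) both conditional $\SLE_\kappa(\underline{\rho})$-type laws are well defined and generate continuous curves up to their continuation thresholds, which holds because $\underline{\rho},\underline{\tilde{\rho}}$ satisfy~\eqref{eq:continuation-threshold} and $\rho>-2$; (ii) the angle gap encoded by $\rho+2>0$ keeps $\eta_1$ weakly to the left of $\eta_2$, so that the pair stays non-crossing and the resampling domains are exactly the left and right parts appearing in the statement; and (iii) the monotone coupling driving coalescence survives the presence of force points away from the origin. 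For (iii) the point is that coalescence is governed by the local interaction of the two curves near the points they share, whereas the additional force points only enter the boundary data far from this region; this can be justified either by invoking GFF monotonicity for arbitrary boundary data compatible with~\eqref{eq:continuation-threshold}, or by a localisation and absolute-continuity reduction to the degenerate base case of~\cite{MS16b}.
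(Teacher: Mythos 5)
Your overall strategy --- existence of a measure with the prescribed conditionals via Proposition~\ref{prop:commutation} with $\underline{\alpha}=0$, followed by uniqueness via the resampling machinery behind \cite[Theorem 4.1]{MS16b} --- is the same as the paper's. The gap is in how you extend the uniqueness step beyond the degenerate base case, which is exactly where the content of this lemma lies. Your key claim, that coalescence is governed by the local interaction of the two curves near the points they share while the additional force points only enter the boundary data far from that region, is not sound. The force points $x^{i,q}$ with $i\ge 1$ determine which intervals of $\partial\bbH$ the curves can hit, and hence the connected-component structure of the left and right parts of $\bbH\backslash\eta_j$; the resampling kernels act independently in each such component (this is how $\SLE_\kappa(\underline{\rho})$ in non-simply-connected domains is defined in Section~\ref{subsec:pre-ig}), so the global topology of the configuration is precisely what the coalescence argument must control, and it changes at every resampling step. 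Moreover, mutual absolute continuity with the degenerate configuration fails in any neighbourhood of a force point, so a ``localisation and absolute-continuity reduction to the degenerate base case'' cannot be carried out near the $x^{i,q}$ without further input.

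The paper supplies that input in two ways: either, after separating the endpoints as in the first step of the proof of \cite[Theorem 4.1]{MS16b}, by invoking the argument of \cite[Appendix A]{MSW19}, which is designed for mutually intersecting and boundary-touching configurations; or, in Appendix~\ref{sec:appendix}, by an irreducibility argument for the alternating resampling Markov chain based on \cite{Meyn-Tweedie}, whose essential ingredient is Lemma~\ref{lem:sle>0prob} --- the statement that the conditional $\SLE_\kappa(\underline{\rho})$ laws assign positive probability to staying in an arbitrarily small neighbourhood of any admissible curve, with control on the first and last boundary-hitting points. Neither of your two proposed fixes produces an analogue of this positive-probability estimate, and without it the ``forgetting of the initial condition'' you describe is not established. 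To repair the proposal you would need either to prove such an estimate (as Lemma~\ref{lem:sle>0prob} does) or to restrict to the non-intersecting case $\rho\ge\frac{\kappa}{2}-2$, where the argument of \cite[Section 4]{MS16b} does go through essentially unchanged --- which is exactly the case division the paper makes.
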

\begin{proof}
	{When $\eta_1$ a.s.\ does not intersect $\eta_2$ (i.e., $\rho\ge\frac{\kappa}{2}-2$), the claim follows from the same argument as in~\cite[Section 4]{MS16b}. For the remaining case, we may first separate the starting and ending points of $(\eta_1,\eta_2)$ as in the first step of~\cite[Proof of Theorem 4.1]{MS16b} and then apply the same argument in~\cite[Appendix A]{MSW19}. See also Appendix~\ref{sec:appendix} for an alternative proof based on Markov chain irreducibility results in~\cite{Meyn-Tweedie} and Lemma~\ref{lem:sle>0prob}. }
\end{proof}

We are going to use the following variant of  Lemma~\ref{lem:resampling}, which follows from  exactly the same Markov chain remixing argument in~\cite[Theorem 4.1]{MS16b} and~\cite[Appendix A]{MSW19}.

\begin{lemma}\label{lem:resampling-a}
	Let $\kappa\in(0,4)$, $\underline{\rho},\rho,\underline{x}$ be the same as Proposition~\ref{prop:commutation}. For $\e>0$, let $D_{\e} = \cup_{q\in\{L,R\}}\cup_{i\ge1}B(x^{i,q},\e)$. Fix $\e$ sufficiently small such that $0\notin D_\e $. Let $\mathcal{P}$ be the joint law of $(\eta_1,\eta_2)$ as described in Lemma~\ref{lem:resampling}, and $\mathcal{P}_{\e }$ be the probability measure given by conditioning $\mathcal{P}$ on the event $E_\e:=\{\eta_1\cap \overline{D}_{\e}=\eta_2\cap \overline{D}_{\e}=\emptyset\}$. Now suppose {$(\tilde \eta_1,\tilde \eta_2)$} is a sample from some probability measure on curves in $\bbH$ running from 0 to $\infty$, such that the conditional law of $\tilde \eta_2$ given $\tilde \eta_1$ is the ${\SLE}_\kappa({\rho;\underline{\tilde\rho}^R})$  in the right part of $\bbH\backslash \tilde \eta_1$ conditioned on not hitting $\overline{D}_{\e}$,
	and the conditional law of $\tilde \eta_1$ given $\tilde \eta_2$ is the ${\SLE}_\kappa(\underline{\rho}^L;\rho)$ in the left part of $\bbH\backslash \tilde\eta_2$ conditioned on not hitting $\overline{D}_{\e}$. Then the joint law of {$(\tilde \eta_1,\tilde \eta_2)$} is the same as $\mathcal{P}_{\e }$ defined above. 
\end{lemma}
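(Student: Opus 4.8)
The plan is to reduce the statement directly to the resampling/remixing machinery behind Lemma~\ref{lem:resampling}, rather than to reprove it from scratch. The essential point is that Lemma~\ref{lem:resampling} asserts that the two conditional laws $\eta_2\mid\eta_1$ and $\eta_1\mid\eta_2$ uniquely pin down the joint law $\mathcal{P}$, because the associated resampling Markov chain on pairs of curves is irreducible with unique stationary measure. What changes in Lemma~\ref{lem:resampling-a} is that we condition everything on the event $E_\e=\{\eta_1\cap\overline{D}_\e=\eta_2\cap\overline{D}_\e=\emptyset\}$, i.e. neither curve enters the $\e$-neighborhoods of the interior force points. The first thing I would verify is that this event is \emph{preserved} by the resampling dynamics: if $(\eta_1,\eta_2)$ already avoids $\overline{D}_\e$ and we resample $\eta_2$ from its conditional law given $\eta_1$ \emph{further conditioned on avoiding $\overline{D}_\e$}, the new pair still lies in $E_\e$, and symmetrically for $\eta_1$. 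This is exactly the dynamics whose invariant measure we want to identify, since the conditional laws in the hypothesis of the lemma are precisely the original $\SLE_\kappa(\cdot)$ conditional laws reconditioned on avoiding $\overline{D}_\e$.

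\medskip
\noindent
The key observation making the reduction clean is the following consistency identity. Since conditioning a Markov kernel on an event that is measurable with respect to the resampled coordinate commutes with restricting the stationary measure, the measure $\mathcal{P}_\e$ obtained by conditioning $\mathcal{P}$ on $E_\e$ is stationary for the reconditioned dynamics. Concretely, I would check that for $\mathcal{P}$-a.e.\ configuration, the conditional law of $\eta_2$ given $\eta_1$ \emph{and} given $E_\e$ equals the $\SLE_\kappa(\rho;\underline{\tilde\rho}^R)$ law in the right part of $\bbH\backslash\eta_1$ conditioned on not hitting $\overline{D}_\e$; this is immediate because on $E_\e$ the constraint on $\eta_1$ (that $\eta_1$ avoids $\overline{D}_\e$) is already built into the conditioning of $\eta_2$, so the event $E_\e$ factorizes as (event on $\eta_1$)$\cap$(event on $\eta_2$). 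Thus $\mathcal{P}_\e$ is a stationary measure for exactly the reconditioned resampling chain, and $(\tilde\eta_1,\tilde\eta_2)$ is stationary for the same chain by hypothesis.

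\medskip
\noindent
It then remains to invoke uniqueness of the stationary measure for the reconditioned chain, which is where the irreducibility argument enters. Here I would appeal to the same Markov chain remixing argument as in~\cite[Theorem 4.1]{MS16b} and~\cite[Appendix A]{MSW19}, adapted to the reconditioned kernel; the alternative route via~\cite{Meyn-Tweedie} together with Lemma~\ref{lem:sle>0prob}, as flagged in the proof of Lemma~\ref{lem:resampling}, applies verbatim once one checks that the positivity/irreducibility estimate survives the extra avoidance conditioning. The main obstacle is precisely this last point: one must ensure that conditioning on avoiding the fixed neighborhoods $\overline{D}_\e$ does not destroy the irreducibility of the resampling chain, i.e.\ that from any $E_\e$-configuration one can reach any other with positive density under finitely many reconditioned resampling steps. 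Since $0\notin D_\e$ and the $\overline{D}_\e$ are a fixed finite union of balls bounded away from the starting point, the $\SLE_\kappa(\cdot)$ curves retain positive probability of realizing any target behavior while staying clear of $\overline{D}_\e$ (this is where Lemma~\ref{lem:sle>0prob} does the work), so the irreducibility estimate is uniform in the resampling step and the conclusion $(\tilde\eta_1,\tilde\eta_2)\overset{d}{=}\mathcal{P}_\e$ follows.
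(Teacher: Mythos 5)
Your proposal is correct and follows essentially the same route as the paper, which simply asserts that the lemma ``follows from exactly the same Markov chain remixing argument'' as Lemma~\ref{lem:resampling} (via~\cite[Theorem 4.1]{MS16b}, \cite[Appendix A]{MSW19}, or the Meyn--Tweedie irreducibility argument with Lemma~\ref{lem:sle>0prob}). Your two added checks --- that $\mathcal{P}_\e$ has exactly the reconditioned kernels because $E_\e$ factorizes into an event on $\eta_1$ and an event on $\eta_2$, and that the irreducibility estimate from Lemma~\ref{lem:sle>0prob} survives the avoidance conditioning since the target configurations can be chosen disjoint from $\overline{D}_\e$ --- are precisely the points the paper leaves implicit, and both are valid.
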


\section{Proof of Theorem~\ref{thm:main}}\label{sec:proof}
In this section, we prove our main result Theorem~\ref{thm:main}. We start with the $\kappa\in (0,4)$ case, where we first extend Theorem~\hyperref[thm:dapeng]{B} to $\SLE_\kappa(\rho;\underline{\rho}^R)$ curves (i.e.\ adding a force point at $0^-$ in Theorem~\hyperref[thm:dapeng]{B}) and then apply the SLE resampling properties  (Lemma~\ref{lem:resampling}). For $\kappa\in (4,8)$ case, we shall use the \emph{SLE duality}, and the $\kappa=4$ case is covered in~\cite[Theorem 1.1.6]{wang2017level}. 

We begin with the following variant of~\cite[Lemma 3.9]{MS17}, which roughly states that flow lines of the GFF can stay arbitrarily close to a given curve. Let $\underline{x},\underline{\rho},\eta$ be as in Theorem~\ref{thm:main}. Recall from~\cite[Remark 5.3]{MS16a} and~\cite[Lemma 2.1]{MW17} that, for fixed $0\le j\le\ell$, if $\rho^{0,R}+...+\rho^{j,R}\ge\frac{\kappa}{2}-2$, then $\eta$ a.s.\ does not hit $(x^{j,R},x^{j+1,R})$, while if $\rho^{0,R}+...+\rho^{j,R}<\frac{\kappa}{2}-2$, then $\eta$ has positive probability of hitting $(x^{j,R},x^{j+1,R})$. Let $J_{\underline{\rho}}$ be the collection of $0\le j\le \ell$ such that $\rho^{0,R}+...+\rho^{j,R}<\frac{\kappa}{2}-2$. For a simple curve $\gamma$ in $\overline{\bbH}$ from 0 to $\infty$ with $\gamma\cap\{x^{1,R},...,x^{\ell,R}\}=\emptyset$, let $J_\gamma=\{0\le j\le\ell:\gamma\cap (x^{j,R},x^{j+1,R})\neq\emptyset\}$. We say that $\gamma$ is admissible w.r.t.\ $(\underline{\rho}^R,\underline{x}^R)$ if $J_\gamma\subset J_{\underline{\rho}}$. In other words, $\gamma$ is admissible if it does not hit the intervals $(x^{j,R},x^{j+1,R})$ where the $\SLE_{\kappa}(\underline{\rho})$ process $\eta$ a.s.\ does not hit.  Similarly, we can define the notion of admissibility for $(\underline{\rho}^L,\underline{x}^L)$. We define the domain $D_\gamma^{i,R}$ to be the connected component of $\bbH\backslash\gamma$ containing $x^{i,R}$, and the points $\sigma_{\gamma}^{j,R}$ and $\xi_{\gamma}^{j,R}$ analogously.  Consider the conformal map $\psi:\bbH\to\bbD$ sending $(0,1,\infty)$ to $(-i,1,i)$.
\begin{lemma}\label{lem:sle>0prob}
	Let $\gamma$ be an admissible curve w.r.t.\ $(\underline{\rho},\underline{x})$ and $\tilde{\gamma} = \psi(\gamma)$. Let $\eta$ be an $\SLE_\kappa(\underline{\rho})$ process in $\bbH$ with force points $\underline{x}$, and $\tilde{\eta}=\psi(\eta)$. For  $\e>0$, define the event $E_\e$ where (i) $\tilde{\eta}$ stays in the $\e$-neighborhood of $\tilde{\gamma}$ and (ii) for any $1\le j\le\ell $, $|\psi(\xi_{\eta}^{j,R})-\psi(\xi_{\gamma}^{j,R})|<\e$  and $|\psi(\sigma_{\eta}^{j,R})-\psi(\sigma_{\gamma}^{j,R})|<\e$. Then for any $\e>0$, the event $E_\e$ happens with positive probability.
\end{lemma}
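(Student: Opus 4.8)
The plan is to prove this as a support statement for $\SLE_\kappa(\underline{\rho})$: with positive probability the process tracks the admissible curve $\gamma$ arbitrarily closely, together with the disconnection data of its force points. Since $\psi$ is a fixed conformal map and $\tilde\gamma=\psi(\gamma)\subset\overline{\bbD}$ is compact, I would work entirely with the Loewner description \eqref{eq:def-sle}--\eqref{eq:def-sle-rho} in $\bbH$. Parametrizing $\gamma$ by half-plane capacity, let $(w_t,v_t^{i,q})$ be the deterministic driving function and force-point trajectories of $\gamma$, and $(W_t,V_t^{i,q})$ the corresponding random data of $\eta$. The curve $\eta$ and the swallowing points $\sigma_\eta^{j,R},\xi_\eta^{j,R}$ (the first and last times the relevant boundary interval is disconnected) are continuous functionals of $(W,V)$ on compact capacity intervals; hence it suffices to show that, with positive probability, $(W_t,V_t^{i,q})$ stays in a narrow tube around $(w_t,v_t^{i,q})$ up to a large capacity time $T$, while the part of $\eta$ after time $T$ escapes to $\infty$ without re-entering the $\e$-neighborhood of $\tilde\gamma$. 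The latter tail event has positive conditional probability by the domain Markov property, since after time $T$ the future of $\eta$ is a fresh $\SLE_\kappa(\underline{\rho})$ in a domain close to the target complement.

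First I would treat the capacity intervals on which $\gamma$, and hence $w$, stays a definite distance away from $\bbR$ and thus from all the $v^{i,q}$. There the drift $\sum_{i,q}\rho^{i,q}/(W_t-V_t^{i,q})$ in \eqref{eq:def-sle-rho} is bounded, so by the Girsanov theorem together with the Stroock--Varadhan support theorem for the driving SDE, the law of $(W_t)$ assigns positive mass to any open tube around $w$; the trajectories $V^{i,q}$ then follow $v^{i,q}$ automatically by continuity of the Loewner flow. This handles every ``excursion'' interval directly.

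The main obstacle is the treatment of the times at which $\gamma$ touches $\bbR$, where $w$ collides with a force point and the drift blows up. Here the admissibility hypothesis $J_\gamma\subset J_{\underline{\rho}}$ is essential: $\gamma$ touches only the intervals $(x^{j,R},x^{j+1,R})$ with $\sum_{i=0}^{j}\rho^{i,R}<\frac{\kappa}{2}-2$. For such $j$ the gap variable $W_t-V_t^{j,R}$ evolves, up to the factor $\sqrt{\kappa}$, as a Bessel process of dimension $d=1+\frac{2(\sum_{i=0}^{j}\rho^{i,R}+2)}{\kappa}<2$, which genuinely reaches $0$; thus the prescribed boundary touching is non-degenerate and occurs with positive probability. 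I would isolate each of the finitely many touching intervals and, on each, use the explicit near-collision behavior of this Bessel-type process to show that the event that $W-V^{j,R}$ follows the excursion/touching pattern of $w-v^{j,R}$ within the tube has positive probability, uniformly away from the continuation threshold (never reached, since each partial sum in \eqref{eq:continuation-threshold} stays above $-2$). Concatenating the excursion and touching intervals by the strong Markov property multiplies finitely many positive probabilities and produces the tube event up to capacity $T$.

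Finally, condition (ii) --- matching $\sigma_\eta^{j,R},\xi_\eta^{j,R}$ to $\sigma_\gamma^{j,R},\xi_\gamma^{j,R}$ --- follows from the same control, since the first and last disconnection times of the interval containing $x^{j,R}$ are continuous functionals of the trajectory over the corresponding touching interval; keeping $(W,V)$ in a sufficiently narrow tube therefore forces $\psi(\sigma_\eta^{j,R})$ and $\psi(\xi_\eta^{j,R})$ within $\e$ of their targets. Combining the positive probability of the tube event up to time $T$ with the positive conditional probability of the escaping tail gives $\bbP[E_\e]>0$. I expect the collision analysis --- controlling the disconnection points through each Bessel-type touching interval while remaining in the tube --- to be the technical heart of the argument, and the step where admissibility is used most crucially.
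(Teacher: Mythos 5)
Your proposal takes a genuinely different route from the paper. The paper does not touch the Loewner driving function at all: it works with the GFF coupling, builds two flanking curves $\tilde{\gamma}^L,\tilde{\gamma}^R$ inside the $\e$-neighborhood of $\tilde{\gamma}$, puts an auxiliary GFF on the region between them with flow-line boundary data chosen so that \emph{its} flow line a.s.\ stays a definite distance from the flanking curves, and then transfers positive probability to $\tilde{\eta}$ by local absolute continuity of the two fields (\cite[Proposition 3.4]{MS16a}) plus the fact that flow lines are deterministic local functionals of the field. Boundary touchings are handled by cutting $\tilde{\gamma}$ into finitely many arcs between touchings, forcing the flow line to exit each sub-tube through a prescribed small boundary arc $I_j$ near the target touching point, and iterating via the Markov property; this is what secures condition (ii) without any collision analysis. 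Note also that the paper first replaces $\gamma$ by a nearby admissible curve meeting $\bbR\cup\{\infty\}$ only at $\{0,\sigma_\gamma^{j,R},\xi_\gamma^{j,R}\}$; your decomposition into ``finitely many touching intervals'' needs the same preliminary reduction, since an arbitrary admissible curve may meet $\bbR$ in an infinite set.

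The genuine gap in your argument is exactly at the step you defer: the collision analysis. Two specific problems. First, your closing claim that $\sigma_\eta^{j,R},\xi_\eta^{j,R}$ are ``continuous functionals of the trajectory'' and hence controlled by keeping $(W,V)$ in a narrow tube is false as stated: boundary hitting is not a continuous functional of the driving process, and a uniform tube $|{(W_t-V_t^{j,R})-(w_t-v_t^{j,R})}|<\delta$ around a path that touches zero does not force $W-V^{j,R}$ to touch zero. The touching must be imposed as a separate positive-probability event, and --- more seriously --- the \emph{location} of the touch (which is what determines $\sigma_\eta^{j,R}$ and $\xi_\eta^{j,R}$) is governed by the displacement $\int_0^t 2\,(V_s^{j,R}-W_s)^{-1}ds$ of the force point through the Bessel collision (dimension $d=1+2(\sum_{i\le j}\rho^{i,R}+2)/\kappa\in(1,2)$, so the integral converges but is singular). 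Showing that this displacement can be steered into a prescribed window while simultaneously keeping the full vector $(W,V^{i,q})$ in its tube is the actual content of the lemma in your approach, and Girsanov is no longer directly available there since the drift is not bounded near the collision. Second, you also need the complementary event that $\eta$ does \emph{not} touch the intervals that $\gamma$ avoids (otherwise $D_\eta^{j,R}$ and hence $\sigma_\eta^{j,R}$ change), which again is not implied by the tube condition alone and must be added to the event. None of this is fatal --- the strategy is plausible --- but as written the proof of the hardest and essential step is an assertion, and the one justification offered for condition (ii) (continuity) is incorrect.
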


\begin{figure}
	\centering
	\begin{tabular}{ccc} 
		\includegraphics[scale=0.6]{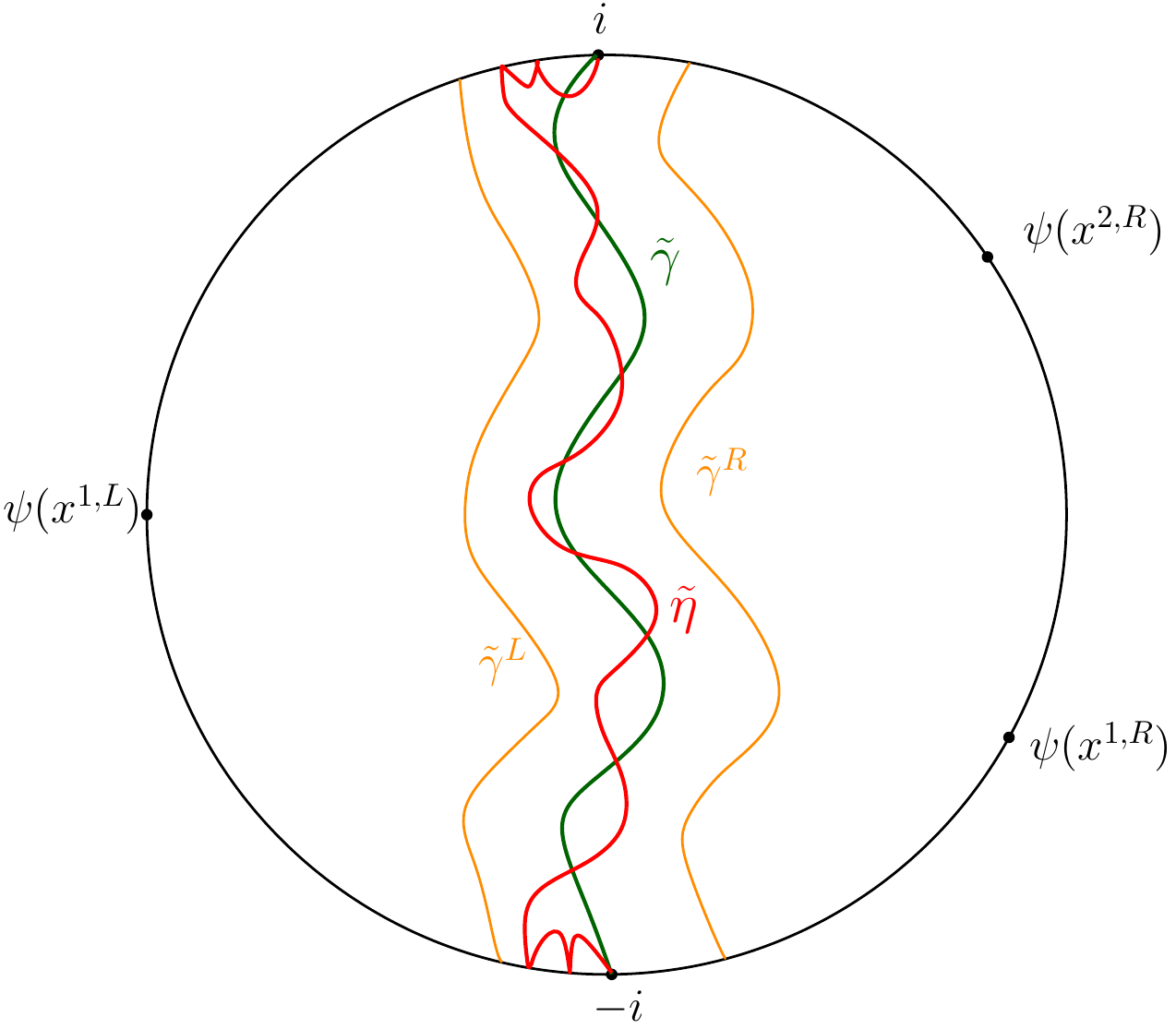}
		&  
		\includegraphics[scale=0.62]{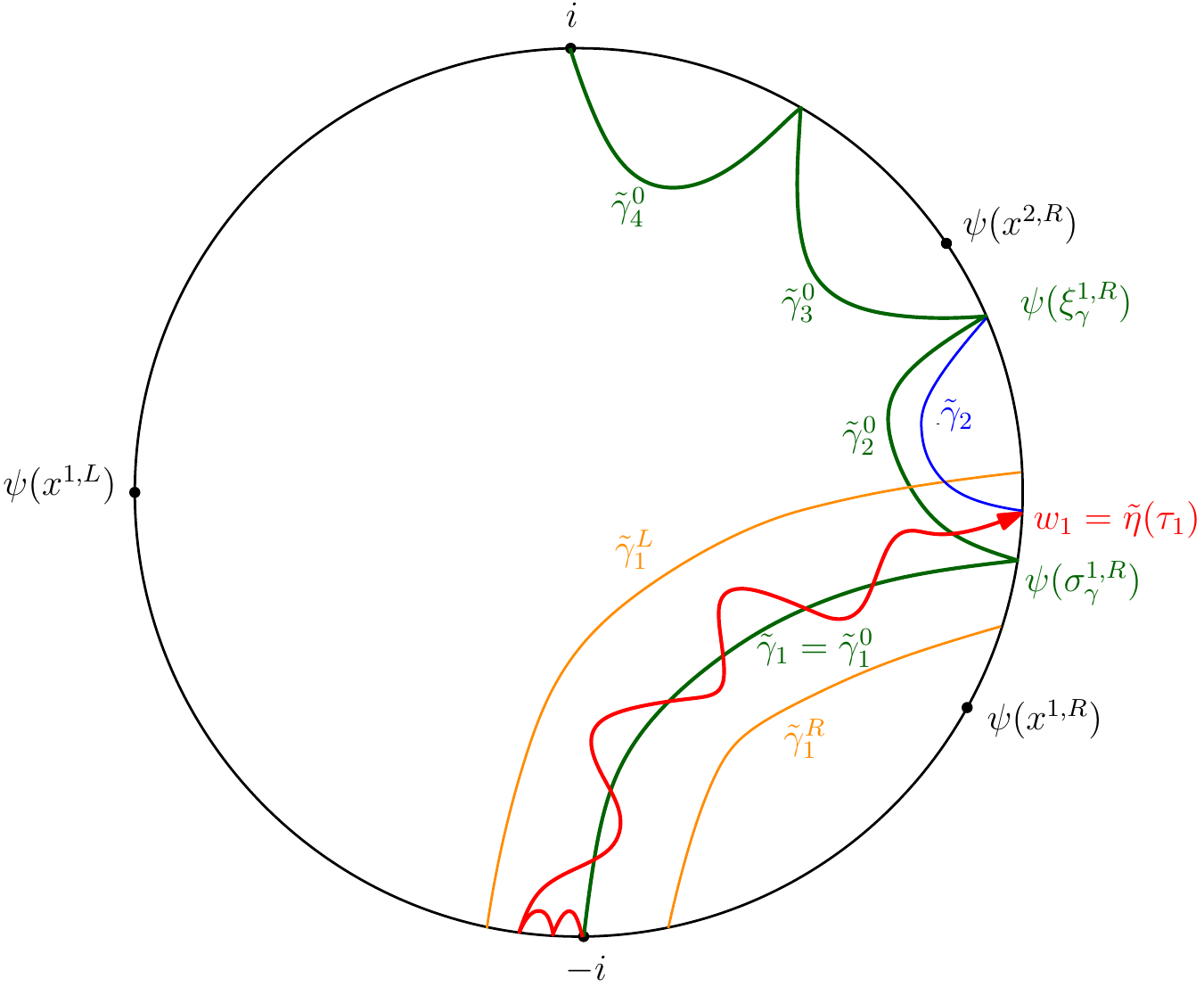}
	\end{tabular}
	\caption{An illustration of Lemma~\ref{lem:sle>0prob}, where we show that flow lines of the GFF can stay arbitraily close to some given curve. \textbf{Left:} the curve $\tilde{\gamma}$ intersects $\partial\bbD$ only at $\pm i$, and we construct the curves $\tilde{\gamma}^L,\tilde{\gamma}^R$ within the $\e$-neighborhood of $\tilde{\gamma}$. Let $U_\gamma$ be the region between $\tilde{\gamma}^L$ and $\tilde{\gamma}^R$, $\tilde{h}_\gamma$ be some GFF in $U_\gamma$ with the same boundary conditions as $\tilde{h}$ on $\partial\bbD\cap\partial U_\gamma$ and flow line boundary conditions on  $\tilde{\gamma}^L,\tilde{\gamma}^R$ such that the flow line $\tilde{\eta}_\gamma$ of $\tilde{h}_\gamma$ from $-i$ to $i$ a.s.\ has positive distance to $\tilde{\gamma}^L\cup\tilde{\gamma}^R$. Then by the GFF absolute continuity argument, the flow line $\tilde{\eta}$ is contained in $U_\gamma$ with positive probability. \textbf{Right}: $\tilde{\gamma}$ consists of 4 arcs in $\bbD$, namely $\tilde{\gamma}_1^0,...,\tilde{\gamma}_4^0$. Let $\tilde{\gamma}_1=\tilde{\gamma}_1^0$ and construct $\tilde{\gamma}_1^L,\tilde{\gamma}_1^R,U_{\tilde{\gamma}_1}$ analogously. Let $I_1$ be the component of $U_{\tilde{\gamma}_1}\cap\bbD$ with $\psi(\sigma_\gamma^{1,R})$ on the boundary.  By the same argument, the event where $\tilde{\eta}$ hits $I_1$ at $w_1$ at some time $\tau_1$ without hitting $\tilde{\gamma}_1^L\cup\tilde{\gamma}_1^R$ has nonzero probability. Then conditioned on $\tilde{\eta}([0,\tau_1])$, we construct the curve $\tilde{\gamma}_2$ from $w_1$ to $\psi(\xi_\gamma^{1,R})$ staying close to $\tilde{\gamma}_2^0$ and iterate the same argument. }\label{fig:posprob}
\end{figure}

\begin{proof}
	We first comment that for each admissible $\gamma$, we may construct some admissible curve $\gamma_\e$ such that (i) $\sigma_{\gamma_\e}^{j,R}=\sigma_{\gamma}^{j,R}$ and $\xi_{\gamma_\e}^{j,R}=\xi_{\gamma}^{j,R}$ for all $1\le j\le \ell$ (ii) $\psi(\gamma_\e)$ is contained in the $\e$-neighborhood of $\tilde{\gamma}$ and (iii) $\gamma_\e\cap(\bbR\cup\{\infty\}) = \{0,\xi_{\gamma_\e}^{1,R},\sigma_{\gamma_\e}^{1,R},...,\xi_{\gamma_\e}^{\ell,R},\sigma_{\gamma_\e}^{\ell,R}\}$. From this point of view, without loss of generality we may assume that $\gamma\cap(\bbR\cup\{\infty\}) = \{0,\xi_{\gamma}^{1,R},\sigma_{\gamma}^{1,R},...,\xi_{\gamma}^{\ell,R},\sigma_{\gamma}^{\ell,R}\}$.
	
	Let $h$ be a GFF on $\bbH$ with boundary conditions~\eqref{eq:ig-gff} such that $\eta$ is the flow line of $h$, and $\tilde{h} = h\circ\psi^{-1}-\chi\arg(\psi^{-1})'$. Then $\tilde{\eta}$ is the flow line of $\tilde{h}$. Assume {$\e$} is sufficiently small such that for $1\le j \le \ell$, $\psi(x^{j,R})$ is not in the $\e$-neighborhood of $\tilde{\gamma}$. We begin with the case where $\gamma\cap\bbR = \{0\}$. We choose some simple path $\tilde{\gamma}^L$ (resp.\ $\tilde{\gamma}^R$) in $\overline{\bbD}\backslash\tilde{\gamma}$ connecting the points $e^{(3\pi/2-\e/6)i}$ and $e^{(\pi/2+\e/6)i}$ (resp.\ $e^{(3\pi/2+\e/6)i}$ and $e^{(\pi/2-\e/6)i}$) such that $\tilde{\gamma}^L\cup \tilde{\gamma}^R$ is contained in the $\e$-neighborhood of $\gamma$, and let $U_\gamma$ be the component of $\bbD\backslash(\tilde{\gamma}^L\cup \tilde{\gamma}^R)$ between $\tilde{\gamma}^L$ and $\tilde{\gamma}^R$. Then as in the proof of~\cite[Lemma 3.9]{MS17}, we may construct a GFF $\tilde{h}_\gamma$ in $U_\gamma$ with the same boundary conditions on {$\partial \bbD\cap \partial U_\gamma$} as $\tilde{h}$ and flow line boundary conditions (see e.g.~\cite[Figure 1.10]{MS16a}) on $\tilde{\gamma}^L\cup\tilde{\gamma}^R$ such that the flow line of $\tilde{h}_\gamma$ from $-i$ to $i$ {has the law as $\SLE_\kappa(\rho^{0,L},-\rho^{0,L},\sum_{j=0}^k\rho^{j,L};\rho^{0,R},-\rho^{0,R}, \sum_{j=0}^\ell\rho^{j,R})$ with force points $(0^-,e^{(3\pi/2-\e/6)i},e^{(\pi/2+\e/6)i};0^+,e^{(3\pi/2+\e/6)i},e^{(\pi/2-\e/6)i})$  and thus} a.s.\ has positive distance from $\tilde{\gamma}^L\cup \tilde{\gamma}^R$. We may choose some {(non-random)} constant $\zeta>0$ small such that this distance is at least $\zeta$ with probability greater than 1/2. Therefore it follows from the same argument of~\cite[Lemma 3.9]{MS17} that, {if we set $U_\gamma^\zeta:=\{z\in U_\gamma;\dist(z, \tilde{\gamma}^L\cup \tilde{\gamma}^R)>\zeta\}$, by the GFF absolute continuous property~\cite[Proposition 3.4]{MS16a},  the law of $\tilde{h}_\gamma$ is absolutely continuous w.r.t.\ $\tilde{h}$ when restricted to the domain $U_\gamma^\zeta$. Since flow lines are a.s. determined by and local sets of the GFF~\cite[Theorem 1.2]{MS16a}, and the flow line of $\tilde{h}_\gamma$ is contained within $U_\gamma^\zeta$ with probability at least 1/2, it follows that}, with positive probability $\tilde{\eta}$ is contained in $U_\gamma^\zeta$ and thus in the $\e$-neighborhood of $\tilde{\gamma}$, which finishes the case when $\gamma\cap\bbR=\{0\}$.
	
	For rest of the case, we write $\tilde{\gamma} = \cup_{i=1}^m\tilde{\gamma}_i^0$ where each $\tilde{\gamma}_i^0:[0,1]\to\overline{\bbD}$ is a subarc of $\tilde{\gamma}$ intersecting $\partial \bbD$ only at the endpoints, and they are aligned in the order traced by $\tilde{\gamma}$. Let $\tilde{\gamma}_1 = \tilde{\gamma}_1^0$. We construct the simple curve $\tilde{\gamma}_1^L$ (resp.\ $\tilde{\gamma}_1^R$) in $\overline{\bbD}\backslash\tilde{\gamma}_1$ within the $\e/m$ neighborhood of $\tilde{\gamma}_1$ connecting a point on $\partial\bbD$ on the left (resp.\ right) side of $\tilde{\gamma}_1$ and a point on $\partial\bbD$ on the same side of $\tilde{\gamma}_1$. Let $U_{\tilde{\gamma}_1}$ be the component of $\bbD\backslash(\tilde{\gamma}_1^L\cup \tilde{\gamma}_1^R)$ between $\tilde{\gamma}_1^L$ and $\tilde{\gamma}_1^R$, and $I_1$ be the component of $\partial U_{\tilde{\gamma}_1}\cap\partial \bbD$ with $\tilde{\gamma}_1(1)$ on its boundary.  
	Then  as above and~\cite[Lemma 3.9]{MS17}, {we may construct a GFF $\tilde{h}_{\tilde{\gamma}_1}$ on $U_{\tilde{\gamma}_1}$ with same boundary conditions as $\tilde{h}$ on $\partial U_{\tilde{\gamma}_1}\cap\partial \bbD$ and flow line condition on $\tilde{\gamma}_1^L\cup \tilde{\gamma}_1^R$, such that the flow line of $\tilde{h}_{\tilde{\gamma}_1}$ a.s. has positive distance to $\tilde{\gamma}_1^L\cup \tilde{\gamma}_1^R$. Again using the same GFF absolute continuity of $\tilde{h}_{\tilde{\gamma}_1}$ w.r.t. $\tilde{h}$ as above and in~\cite[Lemma 3.9]{MS17},}  the event $E_1$ where the flow line $\tilde{\eta}$ first hits $I_1$ at $w_1$ at time $\tau_1$ without hitting $\tilde{\gamma}_1^L\cup \tilde{\gamma}_1^R$ has positive probability. On $E_1$, we choose a simple curve $\tilde{\gamma}_2:[0,1]\to\overline{\bbD}$ such that $\tilde{\gamma}_2(0)=w_1$, $\tilde{\gamma}_2(1) = \tilde{\gamma}_2^0(1)$, $\tilde{\gamma}_2((0,1))\cap\partial\bbD=\emptyset$ and $\tilde{\gamma}_2$ stays within the $2\e/m$-neighborhood of $\tilde{\gamma}_2^0$. Define $\tilde{\gamma}_2^L,\tilde{\gamma}_2^R, I_2$ analogously (with $\e/m$ replaced by $2\e/m$). It follows from the same argument that, conditioned on $E_1$ and $\tilde{\eta}|_{[0,\tau_1]}$, the event $E_2$ where $\tilde{\eta}$ first hits $I_2$ at $w_2$ at time $\tau_2$ without hitting $\tilde{\gamma}_2^L\cup \tilde{\gamma}_2^R$ has positive probability. Now we can conclude the proof by iterating this process, except that at the final step we construct the curve $\tilde{\gamma}_m$ and apply the argument from the $\gamma\cap\bbR=\{0\}$ case.
\end{proof}

\begin{lemma}\label{lem:onesided+}
	Theorem~\ref{thm:main} holds for $\kappa\in (0,4)$, $k=0$ and $\rho^{0,L}\le 0$. That is, under $J(z)=-1/z$, the law of time reversal of $\SLE_\kappa(\rho^{0,L};\underline{\rho}^R)$ processes in $\bbH$ with force points $(0^-;\underline{x}^R)$ agrees with the $\frac{1}{Z}\wt{\SLE}_\kappa(\underline{\hat{\rho}};\underline{\hat{\alpha}})$ described in Theorem~\ref{thm:main} for some constant $Z=Z(\underline{\rho},\underline{x}^R)\in (0,\infty)$. 
\end{lemma}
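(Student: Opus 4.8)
The plan is to realize the target curve $\eta=\SLE_\kappa(\rho^{0,L};\underline{\rho}^R)$ as one member of a commuting pair $(\eta_1,\eta_2)$ produced by Proposition~\ref{prop:commutation}, and to recover the law of $\mathcal{R}(J\circ\eta)$ by reading off the two conditional laws of the time-reversed pair. Concretely, I would take $\eta_1=\eta$ to be the base curve $\SLE_\kappa(\underline{\rho})$ with $\underline{\rho}=(\rho^{0,L};\underline{\rho}^R)$ (so $k=0$), and sample $\eta_2$ to its right as in the first bullet of Proposition~\ref{prop:commutation}, with an auxiliary weight $\rho$ at $0^-$. The two conditional laws are then explicit: given $\eta_2$, the curve $\eta_1$ is $\SLE_\kappa(\rho^{0,L};\rho)$ in the left part of $\bbH\backslash\eta_2$, a degenerate two–force–point configuration; and given $\eta_1$, the curve $\eta_2$ is $\SLE_\kappa(\rho;\underline{\tilde{\rho}}^R)$ in the right part, with $\underline{\tilde{\rho}}^R=-\rho-2+\underline{\rho}^R$. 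The key structural input I would establish first is that time reversal commutes with this flow-line conditioning: writing $\hat{\eta}_i=\mathcal{R}(J\circ\eta_i)$, the conditional law of $\hat{\eta}_1$ given $\hat{\eta}_2$ is the conformal image of the reversal of the conditional law of $\eta_1$ given $\eta_2$ in the corresponding complementary component, and symmetrically for $\hat{\eta}_2\mid\hat{\eta}_1$.

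The clean mechanism is the degenerate choice $\rho=0$. Then both conditionals become one-sided: $\eta_1\mid\eta_2=\SLE_\kappa(\rho^{0,L})$ has a single left force point, and $\eta_2\mid\eta_1=\SLE_\kappa(\underline{\tilde{\rho}}^R)$ is purely right-sided. Their reversals are therefore both given by Theorem~\hyperref[thm:dapeng]{B} (applied once on the left and once on the right), up to the conformal-derivative weights that upgrade an $\SLE$ to a $\wt{\SLE}$; the exponents $\hat{\alpha}=\frac{\hat{\rho}(\kappa-4)}{2\kappa}$ appear exactly as in Theorem~\hyperref[thm:dapeng]{B}. Having identified both conditional laws of the reversed pair $(\hat{\eta}_1,\hat{\eta}_2)$, I would invoke the resampling uniqueness of Lemma~\ref{lem:resampling} to pin down the joint law, and hence the marginal of $\hat{\eta}_1$, which is the asserted $\frac1Z\wt{\SLE}_\kappa(\underline{\hat{\rho}};\underline{\hat{\alpha}})$. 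Since Lemma~\ref{lem:resampling} is stated without weights, this step goes through Proposition~\ref{prop:commutation}, which I would use to pass between the weighted $\wt{\SLE}$ laws and their unweighted $\SLE$ counterparts via explicit products of conformal derivatives, so that the uniqueness is applied to the unweighted pair and then transferred back.

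The obstruction is that $\rho=0$ is admissible only when $\min_{0\le j\le\ell}\sum_{i=0}^j\rho^{i,R}>0$: the curve $\SLE_\kappa(\underline{\tilde{\rho}}^R)$ meets the continuation threshold~\eqref{eq:continuation-threshold} only then, because $\sum_{i=0}^j\tilde{\rho}^{i,R}=\sum_{i=0}^j\rho^{i,R}-\rho-2$. I therefore expect to run an induction on the number $\ell$ of non-degenerate right force points, with base case $\ell=0$ being exactly the degenerate reversal $\SLE_\kappa(\rho^{0,L};\rho^{0,R})\mapsto\SLE_\kappa(\rho^{0,R};\rho^{0,L})$ of Theorem~\hyperref[thm:igII]{A}. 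In the inductive step I keep $\rho\in(-2,\min_j\sum_{i=0}^j\rho^{i,R})$ nonzero: the left conditional $\SLE_\kappa(\rho^{0,L};\rho)$ is still handled by Theorem~\hyperref[thm:igII]{A}, while the right conditional $\SLE_\kappa(\rho;\underline{\tilde{\rho}}^R)$ is of the same form treated by the lemma and should be supplied by the inductive hypothesis. The hard part is making this reduction genuinely decrease complexity, since the commutation preserves the count of force points; the mechanism I plan to use is to choose the auxiliary weight so that $\eta_2$ is boundary-touching and separates $x^{1,R},\dots,x^{\ell,R}$ into distinct complementary components, whereupon $\eta_1$ restricted to each pocket carries strictly fewer force points (this is precisely why $\wt{\SLE}_\kappa(\underline{\rho};\underline{\alpha})$ was defined on non-simply-connected domains), so the inductive hypothesis applies componentwise, and the reversal of the concatenation is the reverse-order concatenation of the reversals. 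To arrange the separation in the non-touching regime and to justify conditioning, I would condition both curves to avoid small neighborhoods of $x^{1,R},\dots,x^{\ell,R}$: Lemma~\ref{lem:sle>0prob} guarantees that the relevant events and the locations of the separation points $\sigma^{i,R},\xi^{i,R}$ occur with positive probability, and Lemma~\ref{lem:resampling-a} supplies the resampling characterization of the pair conditioned to avoid these neighborhoods, after which I let the neighborhood radius tend to $0$ to remove the conditioning. The normalizing constant $Z=Z(\underline{\rho},\underline{x}^R)$ is tracked throughout from the constant in Theorem~\hyperref[thm:dapeng]{B} together with the weightings produced by Proposition~\ref{prop:commutation}. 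I expect the delicate intermediate regime $\min_j\sum_{i=0}^j\rho^{i,R}\in[\tfrac{\kappa}{2}-2,0]$, where the target itself is not boundary-touching, to be the genuine technical bottleneck, requiring the auxiliary curve (rather than the target) to furnish the separation.
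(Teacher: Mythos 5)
Your proposal contains a genuine gap: the induction on $\ell$ does not close. Your own diagnosis is right that the commutation relation preserves the number and locations of the right force points --- the right conditional $\SLE_\kappa(\rho;\underline{\tilde{\rho}}^R)$ is an instance of the very statement being proved, with the same $\ell$ --- and the pocket-separation mechanism you propose to escape this does not work. The curve whose boundary-touching would separate $x^{1,R},\dots,x^{\ell,R}$ into distinct complementary components is $\eta_1$ (the right force points lie to the right of $\eta_2$, so $\eta_2$ touching $(0,\infty)$ is irrelevant to the conditional law of $\eta_1$, which is already degenerate); whether $\eta_1$ hits a given interval $(x^{j,R},x^{j+1,R})$ is dictated by the partial sums of the \emph{given} weights $\underline{\rho}^R$, not by the auxiliary $\rho$, and even when hitting is possible it occurs only with positive probability, never almost surely. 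On the complementary event all $\ell$ force points sit in a single component and the inductive hypothesis is inapplicable. So outside the special regime $\min_j\sum_{i=0}^j\rho^{i,R}>0$ (where $\rho=0$ is admissible and your argument is essentially sound, modulo applying Theorem~\hyperref[thm:dapeng]{B} componentwise and unweighting before invoking Lemma~\ref{lem:resampling}), the argument is circular.

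The idea you are missing is to set up the commuting pair the other way around, so that no resampling and no induction are needed. The paper samples the purely one-sided curve $\eta_2\sim\SLE_\kappa(\underline{\rho}^R)$ \emph{first}, in all of $\bbH$, and then the auxiliary curve $\eta_1\sim\SLE_\kappa(-\rho^{0,L}-2;\rho^{0,L})$ with only degenerate force points in the left part of $\bbH\backslash\eta_2$; by Proposition~\ref{prop:commutation} the target $\SLE_\kappa(\rho^{0,L};\underline{\rho}^R)$ then appears as the conditional law of $\eta_2$ given $\eta_1$. The joint law of the reversed pair is now determined outright as marginal times conditional: Theorem~\hyperref[thm:dapeng]{B} gives the marginal of $\hat{\eta}_2$ directly (no pockets, since $\eta_2$ lives in $\bbH$), and Theorem~\hyperref[thm:igII]{A} gives the conditional of $\hat{\eta}_1$ given $\hat{\eta}_2$. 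Applying Proposition~\ref{prop:commutation} to this known joint law in the opposite order reads off the conditional law of $\hat{\eta}_2$ given $\hat{\eta}_1$, which is exactly the asserted $\wt{\SLE}_\kappa(\underline{\hat{\rho}};\underline{\hat{\alpha}})$ --- but in the right part of $\bbH\backslash\eta_1$ rather than in $\bbH$. The one remaining step, which your write-up would also need in some form, is to remove the auxiliary curve: condition $\eta_1$ to stay in an $\e$-neighborhood of the boundary arc (positive probability by Lemma~\ref{lem:sle>0prob}), so the right component converges to $\bbH$ in the Carath\'eodory sense, and conclude by continuity of $\SLE_\kappa(\underline{\rho})$ in the locations of the force points. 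Your resampling-based strategy is essentially the one the paper reserves for the general two-sided case (where Lemma~\ref{lem:onesided+} itself supplies the conditional reversals), which is precisely why it cannot be used to prove Lemma~\ref{lem:onesided+} without circularity.
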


\begin{figure}
	\centering
	\begin{tabular}{ccc} 
		\includegraphics[scale=0.48]{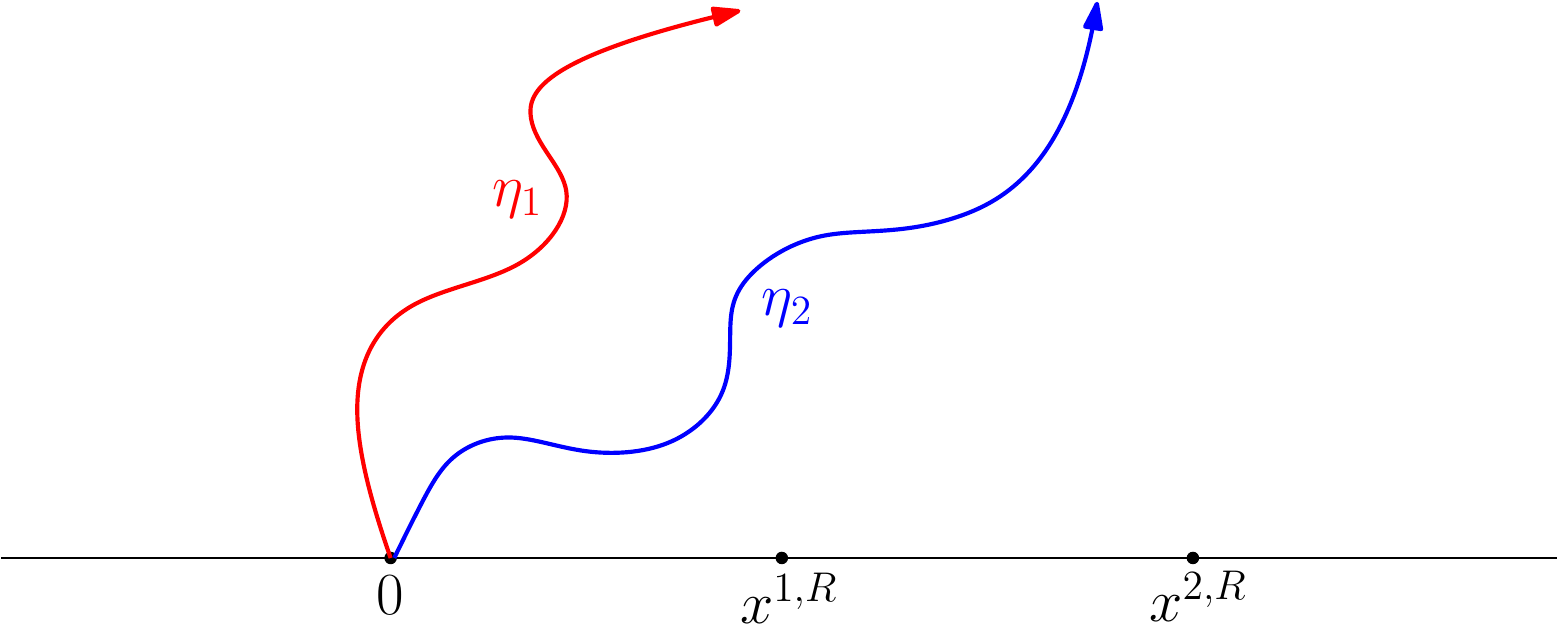}
		& 
		\includegraphics[scale=0.48]{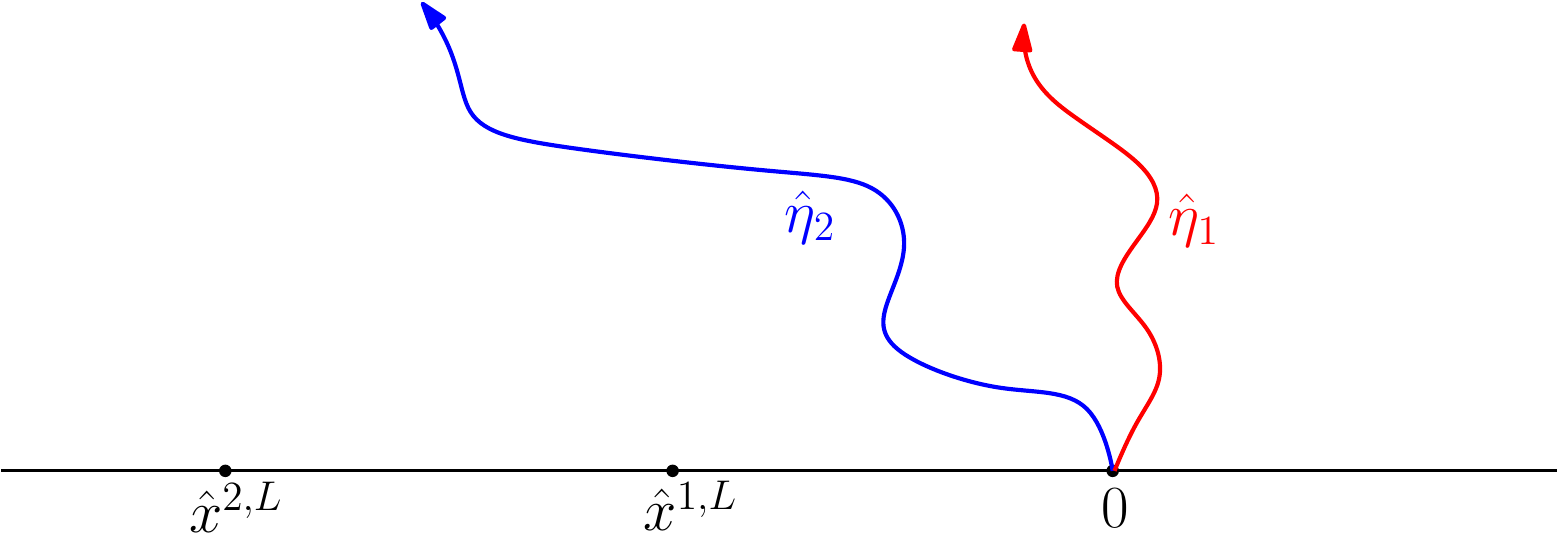}
	\end{tabular}
	\caption{An illustration of the proof of Lemma~\ref{lem:onesided+} for $\ell=2$. \textbf{Left:} We sample an $\SLE_\kappa(\rho^{0,R},\rho^{1,R},\rho^{2,R})$ process $\eta_2$, and an $\SLE_\kappa(-\rho^{0,L}-2;\rho^{0,L})$ process $\eta_1$ in the left part of $\bbH\backslash\eta_2$. Then the conditional law of $\eta_2$ given $\eta_1$ is the $\SLE_\kappa(\rho^{0,L};\rho^{0,R},\rho^{1,R},\rho^{2,R})$ process in the right part of $\bbH\backslash\eta_1$ with force points $0^-;0^+,x^{1,R},x^{2,R}$. \textbf{Right:} For $i=1,2$, $\hat{\eta}_i$ is the time reversal of $J\circ\eta_i$, and $\hat{x}^{1,L}=-1/x^{2,R}$, $\hat{x}^{2,L}=-1/x^{1,R}$. By Theorem~\hyperref[thm:dapeng]{B}, $\hat{\eta}_2$ has the law $\frac{1}{Z}\wt{\SLE}_\kappa(\rho^{0,R}+\rho^{1,R}+\rho^{2,R},-\rho^{2,R},-\rho^{1,R};\frac{\rho^{2,R}(4-\kappa)}{2\kappa},\frac{\rho^{1,R}(4-\kappa)}{2\kappa} )$ with force points $0^-,\hat{x}^{1,L},\hat{x}^{2,L}$. By Theorem~\hyperref[thm:igII]{A}, conditioned on $\hat{\eta}_2$, $\hat{\eta}_1$ is the $\SLE_\kappa(\hat{\rho}^{0,R};-\hat{\rho}^{0,R}-2)$ process  in the right part of $\bbH\backslash\hat{\eta}_2$ with force points $0^-$ and $0^+$. Therefore by Proposition~\ref{prop:commutation}, the conditional law of $\hat{\eta}_2$ given $\hat{\eta}_1$ is $\frac{1}{Z(\eta_1)}\wt{\SLE}_\kappa(\rho^{0,R}+\rho^{1,R}+\rho^{2,R},-\rho^{2,R},-\rho^{1,R};\rho^{0,L};\frac{\rho^{2,R}(4-\kappa)}{2\kappa},\frac{\rho^{1,R}(4-\kappa)}{2\kappa};0 )$ in the left part of $\bbH\backslash\hat{\eta}_1$ with force points $0^-,\hat{x}^{1,L},\hat{x}^{2,L};0^+$, and the claim follows by comparing the two figures. In Lemma~\ref{lem:partition-function}, we further show that the constant $Z(\eta_1)$ is actually independent of $\eta_1$. }\label{fig:onsided}
\end{figure}


\begin{proof}
	We sample an $\SLE_\kappa(\underline{\rho}^R)$ curve $\eta_2$ in $\bbH$ from 0 to $\infty$ with force points $\underline{x}^R$, and conditioned on $\eta_2$, we sample an $\SLE_\kappa(-\rho^{0,L}-2;\rho^{0,L})$ process $\eta_1$ from 0 to $\infty$ in the left part of $\bbH\backslash\eta_2$ with force points $0^-$ and $0^+$. Then it follows from the construction in Proposition~\ref{prop:commutation} that conditioned on $\eta_1$, $\eta_2$ is an $\SLE_\kappa(\rho^{0,L};\underline{\rho}^R)$ process in the right part of $\bbH\backslash\eta_2$ with force points $(0^-;\underline{x}^R)$. 
	
	For $i=1,2$, let {$\hat{\eta}_i=\mathcal{R}(J\circ\eta_i)$}. Recall the notion of $\hat{x}^{i,q},\hat{\rho}^{i,q},\hat{\alpha}^{i,q}$ in the statement of Theorem~\ref{thm:main}. Now by Theorem~\hyperref[thm:dapeng]{B}, we know that law of $\hat{\eta}_2$ is the probability measure proportional to $\wt{\SLE}_\kappa(\underline{\hat{\rho}}^L;\underline{\hat{\alpha}}^L)$ process from 0 to $\infty$ with force points $\underline{\hat{x}}^L$. Moreover, by Theorem~\hyperref[thm:igII]{A}, the conditional law of $\hat{\eta}_1$ given $\hat{\eta}_2$ is the $\SLE_\kappa(\hat{\rho}^{0,R};-\hat{\rho}^{0,R}-2)$ process in the right part of $\bbH\backslash\hat{\eta}_2$ with force points $0^-$ and $0^+$. 
	Therefore it follows from Proposition~\ref{prop:commutation} that the conditional law of $\hat{\eta}_2$ given $\hat{\eta}_1$ is a constant (possibly depending on $\hat{\eta}_1$) times $\wt{\SLE}_\kappa(\underline{\hat{\rho}}^L;\hat{\rho}^{0,R};\underline{\hat{\alpha}}^L)$ in the left part of $\bbH\backslash\hat{\eta}_1$ with force points $(\underline{\hat{x}}^L;0^+)$. This justifies  the  reversibility of an $\SLE_\kappa(\rho^{0,L};\underline{\rho}^R)$ process in the right part of $\bbH\backslash\eta_1$ with force points $(0^-;\underline{x}^R)$. 
	
	Now consider the conformal map $\psi:\bbH\to\bbD$ sending $(0,1,\infty)$ to $(-i,1,i)$, and let $\tilde{\eta}_j = \psi\circ\eta_j$ for $j=1,2$.  For $\e>0$, let $A_\e$ be the partial annulus $\{z:\frac{\pi}{2}-\e<\arg z<\frac{3\pi}{2}+\e;\ 1-\e<|z|<1\}$. Fix $\e_0>0$ small such that $A_{\e_0}$ contains none of points in $\psi(\underline{x}^R)$ {other than $\psi(x^{0,R})=-i$} and let $\e<\e_0$. Then by Lemma~\ref{lem:sle>0prob},  the event $E_\e'$ where $\tilde{\eta}_1$ is contained in the domain $A_\e$ has positive probability.  On the event $E_\e'$, let $\sigma_\e$ be the last point on the arc $\{z:\frac{3\pi}{2}<\arg z< 2\pi;\ |z|=1\}$ hit by $\tilde \eta_1$, and $\xi_\e$ be the first point on the arc $\{z:0<\arg z< \frac{\pi}{2};\ |z|=1\}$ hit by $\tilde \eta_1$. Let $D_1^\e$ be the connected component of $\bbD\backslash\eta_1$ with 1 on the boundary, and $\psi_\e:D_1^\e\to\bbD$ sending $(\sigma_\e,1,\xi_\e)$ to $(-i,1,i)$. Then $\psi_\e\circ\tilde \eta_2$ is an $\SLE_\kappa(\rho^{0,L};\underline{\rho}^R)$ process in $\bbD$ with force points $((-i)^-;\psi_\e\circ\psi(\underline{x}^R))$ (with $\psi_\e((-i)^+)$ identified as $(-i)^+$), and the law of its time reversal is proportional to the $\wt{\SLE}_\kappa(\underline{\hat{\rho}}^L;\hat{\rho}^{0,R};\underline{\hat{\alpha}}^L;0)$ process. Therefore as we condition on $E_\e'$ and send $\e\to 0$, {$D^\e_1$} converges to $\bbD$ in Caratheodory topology, and $\psi_\e\circ\psi(\underline{x}^R)$ converges to $\psi(\underline{x}^R)$. {To conclude the proof, we {look at the time reversal result of $\psi_\e\circ\eta_2$ in $\bbD$, send $\e\to0$ and apply}  the continuity of $\SLE_\kappa(\underline{\rho})$ processes w.r.t.\ the location of force points from~\cite[Section 2]{MS16a}. To be more precise, suppose $\eta$ and $(\eta^\e)_{\e>0}$ are $\SLE_\kappa(\underline{\rho})$ processes in $\bbD$ from $-i$ to $i$ with force points $\underline{y}$ and $\underline{y}^\e$, such that $y^{0,L,\e}=(-i)^-$, $y^{0,R,\e}=(-i)^+$ and $y^{j,q,\e}\to y^{j,q}$ as $\e\to 0$. Let $h$ and $(h^\e)_{\e>0}$ be the corresponding GFF on $\bbD$ such that $\eta$ and $(\eta^\e)_{\e>0}$ are the flow lines of $h$ and $(h^\e)_{\e>0}$ from $-i$. For $\delta>0$, let $D_\delta':=\cup_{q\in\{L,R\}}\cup_{j\ge1}B(y^{j,L},\delta)$. By~\cite[Proposition 3.4, Remark 3.5]{MS16a}, the total variation distance between $h^\e|_{\bbD\backslash D_\delta'}$ and $h|_{\bbD\backslash D_\delta'}$ goes to 0 as $\e\to 0$ for fixed $\delta$. Since flow lines are deterministic functions {and local sets} of the GFF~\cite[Theorem 1.2]{MS16a}, it follows that the law of $\eta^\e$ conditioned on not hitting $D_\delta'$ converges in total variation distance to that of $\eta$ conditioned on not hitting $D_\delta'$ as $\e\to 0$. From this argument, the law of the time reversal of an $\SLE_\kappa(\rho^{0,L};\underline{\rho}^R)$ process conditioned on having distance $\delta$ to $x^{j,R}$ for $j\ge 1$   agrees with $\wt{\SLE}_\kappa(\underline{\hat{\rho}}^L;\hat{\rho}^{0,R};\underline{\hat{\alpha}}^L;0)$ conditioned on the same event (up to a multiplicative constant), and the claim follows by taking $\delta\to 0$}.
\end{proof}

\begin{corollary}\label{cor}
	Let $\underline{\rho},\underline{x},\underline{\hat{x}},\underline{\hat{\rho}}$ be as in Lemma~\ref{lem:onesided+}, and $\gamma$ be a curve in $\overline{\bbH}$ from 0 to $\infty$ which is admissible w.r.t.\ $(2+\rho^{0,L}+\underline{\rho}^R,\underline{x}^R)$. Let $H_\gamma^R$ be  the right part of $\bbH\backslash\gamma$. Sample an $\SLE_\kappa(\underline{\rho})$ process $\eta_0$ in $H_\gamma^R$ with force points $\underline{x}$. Then there exists some constant $Z(\gamma,\underline{\rho},\underline{x})$ such that the law of the time reversal of $J(\eta_0)$ is equal to $\frac{1}{Z(\gamma,\underline{\rho},\underline{x})}$ times  $\wt{\SLE}_\kappa(\underline{\hat{\rho}};\underline{\hat{\alpha}})$ in $J(H_\gamma^R)$ with the force points $\underline{\hat{x}}$. 
\end{corollary}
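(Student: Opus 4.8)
The plan is to realize the corollary as the \emph{quenched} form of the coupling that underlies Lemma~\ref{lem:onesided+}, with the boundary curve $\gamma$ playing the role of the left curve there, and then to promote the resulting almost-sure identity to every admissible $\gamma$ by a continuity-and-support argument. First I would reinstate that coupling: sample $\eta_2$ as $\SLE_\kappa(\underline{\rho}^R)$ in $\bbH$ with force points $\underline{x}^R$, and, given $\eta_2$, sample $\eta_1$ as $\SLE_\kappa(-\rho^{0,L}-2;\rho^{0,L})$ in the left part of $\bbH\setminus\eta_2$, so that conditionally on $\eta_1$ the curve $\eta_2$ is $\SLE_\kappa(\rho^{0,L};\underline{\rho}^R)=\SLE_\kappa(\underline{\rho})$ in the right part $H_{\eta_1}^R$ with force points $(0^-;\underline{x}^R)$. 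Re-running Proposition~\ref{prop:commutation} in the other order (sampling $\eta_1$ first) identifies the marginal law of $\eta_1$ as $\SLE_\kappa(-\rho^{0,L}-2;2+\rho^{0,L}+\underline{\rho}^R)$ with force points $(0^-;\underline{x}^R)$; this is precisely why the admissibility hypothesis is imposed with respect to $(2+\rho^{0,L}+\underline{\rho}^R,\underline{x}^R)$, since it is the right-side hitting behaviour of $\eta_1$ that is governed by these weights.

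The proof of Lemma~\ref{lem:onesided+} shows, for $\eta_1$-almost every realization, that the regular conditional law of $\mathcal{R}(J\circ\eta_2)$ given $\eta_1$ equals $\tfrac{1}{Z(\eta_1)}\wt{\SLE}_\kappa(\underline{\hat{\rho}};\underline{\hat{\alpha}})$ in $J(H_{\eta_1}^R)$ with force points $\underline{\hat{x}}$ (this is exactly the identity produced there from Theorem~\hyperref[thm:dapeng]{B}, Theorem~\hyperref[thm:igII]{A} and Proposition~\ref{prop:commutation}). Since conditioning on $\{\eta_1=\gamma\}$ turns $\eta_2$ into the curve $\eta_0=\SLE_\kappa(\underline{\rho})$ in $H_\gamma^R$ and turns $J(H_{\eta_1}^R)$ into $J(H_\gamma^R)$, the corollary is literally this conditional identity evaluated at the single curve $\eta_1=\gamma$. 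To access a fixed $\gamma$ I would set $F(\gamma)$ to be the law of $\mathcal{R}(J\circ\eta_0)$ and $G(\gamma)$ the measure $\wt{\SLE}_\kappa(\underline{\hat{\rho}};\underline{\hat{\alpha}})$ in $J(H_\gamma^R)$ with force points $\underline{\hat{x}}$; the previous step gives $F(\gamma)=\tfrac{1}{Z(\gamma)}G(\gamma)$ for $\eta_1$-a.e.\ $\gamma$, where $Z(\gamma)$ is the (finite, positive) total mass of $G(\gamma)$.

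I would then upgrade this to all admissible $\gamma$. The two maps $\gamma\mapsto F(\gamma)$ and $\gamma\mapsto G(\gamma)$ are continuous in $\gamma$: along $\eta_1^n\to\gamma$ the domains $H_{\eta_1^n}^R$ converge to $H_\gamma^R$ in the Caratheodory sense, the force points and boundary data converge, and hence the $\SLE_\kappa(\underline{\rho})$ law, its time reversal, and the conformal-derivative weights defining $\wt{\SLE}$ all converge; this is the same GFF absolute-continuity and Caratheodory input via~\cite[Proposition 3.4, Remark 3.5]{MS16a} that is used at the end of the proof of Lemma~\ref{lem:onesided+}. Because the marginal of $\eta_1$ computed above is an $\SLE_\kappa$ process whose right-side hitting is governed by $(2+\rho^{0,L}+\underline{\rho}^R,\underline{x}^R)$, Lemma~\ref{lem:sle>0prob} shows that every such admissible $\gamma$ lies in the support of the law of $\eta_1$. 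Thus each admissible $\gamma$ is a limit of curves in the full-measure set where $F=\tfrac{1}{Z}G$, and continuity of $F$, $G$ and of the total mass $Z(\cdot)$ upgrades the identity to $F(\gamma)=\tfrac{1}{Z(\gamma,\underline{\rho},\underline{x})}G(\gamma)$ for all such $\gamma$, which is the assertion.

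The main obstacle is exactly this continuity of $F$ and $G$ in the conditioning curve $\gamma$. The delicate points are the places where $\gamma$ touches $\bbR$, so that $H_\gamma^R$ is genuinely non-simply connected and $\wt{\SLE}$ must be read through the component-wise concatenation of Section~\ref{subsec:pre-ig}, and the behaviour near the force points $x^{i,R}$, where the conformal derivatives entering the Radon--Nikodym weight must be controlled uniformly. I would handle both by localizing away from the force points, that is, conditioning on the reversed curve keeping distance $\delta$ from each $x^{i,R}$ and letting $\delta\to0$, exactly as in the concluding paragraph of the proof of Lemma~\ref{lem:onesided+}, so that the convergence of laws reduces to the total-variation convergence of the underlying GFF restricted to the complement of small balls around the force points.
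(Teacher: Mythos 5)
Your route is far more roundabout than the paper's, and it hinges on a step you flag as ``the main obstacle'' but do not actually carry out. The paper's proof is essentially one line: by the definition of $\SLE_\kappa(\underline{\rho})$ (and of $\wt{\SLE}_\kappa(\underline{\rho};\underline{\alpha})$) in non-simply-connected domains from Section~\ref{subsec:pre-ig}, the curve $\eta_0$ in $H_\gamma^R$ is an \emph{independent concatenation} over the connected components of $H_\gamma^R$; each component whose boundary contains some $x^{i,R}$ with $i\ge 1$ is conformally a copy of the configuration of Lemma~\ref{lem:onesided+} (with merged weights whose partial sums are a subset of those of $\underline{\rho}^R$, so~\eqref{eq:continuation-threshold} is preserved), and each remaining component is covered by Theorem~\hyperref[thm:igII]{A}, with no constant. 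Since Lemma~\ref{lem:onesided+} is already a statement about an arbitrary \emph{deterministic} configuration of force points in $\bbH$, the corollary follows immediately by applying it componentwise and letting $Z(\gamma,\underline{\rho},\underline{x})$ be the finite product of the componentwise constants; there is nothing to upgrade from ``a.e.\ $\eta_1$'' to ``every admissible $\gamma$''.

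By contrast, you derive the quenched statement from the a.e.\ conditional-law identity inside the proof of Lemma~\ref{lem:onesided+}, which forces you to establish (i) that every admissible $\gamma$ lies in the support of the law of $\eta_1$ in a strong enough sense --- Lemma~\ref{lem:sle>0prob} does supply this, including the approximate matching of the points $\sigma_\gamma^{j,R},\xi_\gamma^{j,R}$ --- and (ii) continuity of $\gamma\mapsto F(\gamma)$ and $\gamma\mapsto G(\gamma)$ (and of the total mass $Z(\cdot)$) along such approximating sequences. Step (ii) is the entire technical content of your argument and is left unproven. It is genuinely delicate: the component structure of $H_\gamma^R$ --- how many components there are and which force points lie on the boundary of which --- is not a continuous function of $\gamma$ in the Hausdorff topology, and your proposed fix (conditioning the \emph{inner} curve to keep distance $\delta$ from the $x^{i,R}$) addresses the wrong limit, since the difficulty is the dependence on the \emph{outer} curve $\gamma$ rather than the behaviour of $\eta_0$ near the force points. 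The machinery needed to close this gap does appear in the paper --- the $E_\e$ conditioning of Lemma~\ref{lem:sle>0prob}, componentwise Caratheodory convergence of $D_{\eta_1}^{j,R}$ to $D_\gamma^{j,R}$, and continuity of $\SLE_\kappa(\underline{\rho})$ in the force-point locations, exactly as deployed in the proof of Lemma~\ref{lem:partition-function} --- so your route could be completed, but as written it replaces a direct consequence of Lemma~\ref{lem:onesided+} with an argument whose hardest step is missing.
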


\begin{proof}
	{We apply Lemma~\ref{lem:onesided+} within (finitely many) connected components whose boundaries contain the force points $\underline{x}$, and apply Theorem~\hyperref[thm:igII]{A} for rest of the connected components (where there are no constants).  Then the constant $Z(\gamma,\underline{\rho},\underline{x})$ is now a (finite) product of the corresponding constants in each of the connected components.}
\end{proof}

The next lemma states that in some sense, the constant $Z$ in Lemma~\ref{lem:onesided+} does not depend on the choice of $\underline{x}$. {This follows by comparing the two ways of viewing the marginal law of the $\hat{\eta}_1$ above: directly applying Lemma~\ref{lem:onesided+}, and applying Proposition~\ref{prop:commutation} to the pair $(\hat{\eta}_1,\hat{\eta}_2)$.}
\begin{lemma}\label{lem:partition-function}
	In the setting of Corollary~\ref{cor}, the constant $Z(\gamma,\underline{\rho},\underline{x})$ does not depend on $\gamma$.
\end{lemma}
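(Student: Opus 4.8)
The plan is to build an \emph{unnormalized} coupling of $(\hat\eta_1,\hat\eta_2)$ and to compute the marginal law of $\hat\eta_1$ in two ways, reading off $Z(\gamma,\underline\rho,\underline x)$ from one of them. Following the construction in the proof of Lemma~\ref{lem:onesided+}, first sample $\hat\eta_1$ from its marginal $\nu_1=\SLE_\kappa(\hat\rho^{0,R};-\hat\rho^{0,R}-2)$, obtained by applying Theorem~\hyperref[thm:igII]{A} to the $\SLE_\kappa(-\rho^{0,L}-2;\rho^{0,L})$ marginal of $\eta_1$; note that $\hat\eta_1$ carries force points only at $0^{\pm}$, so $\nu_1$ is an ordinary probability measure with no conformal-derivative weighting. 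Then, given $\hat\eta_1$, sample $\hat\eta_2$ from the \emph{unnormalized} measure $\mu_{\hat\eta_1}:=\wt\SLE_\kappa(\underline{\hat\rho};\underline{\hat\alpha})$ in the appropriate part of $\bbH\setminus\hat\eta_1$ with force points $\underline{\hat x}$, whose total mass is exactly $Z(\gamma,\underline\rho,\underline x)$ for $\gamma$ the curve underlying $\hat\eta_1$, by Corollary~\ref{cor}. Denote the resulting $\sigma$-finite measure on pairs by $\mathcal{N}$. Viewing it directly, the $\hat\eta_1$-marginal of $\mathcal{N}$ has density $Z(\hat\eta_1)$ with respect to $\nu_1$.

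For the second view I would apply Proposition~\ref{prop:commutation} to $(\hat\eta_1,\hat\eta_2)$. Since $\hat\eta_1$ carries zero $\underline\alpha$-weights, the first sampling recipe of the proposition coincides with the order used to define $\mathcal{N}$, so the proposition lets me resample $\mathcal{N}$ in the opposite order: first draw $\hat\eta_2$ from an unnormalized $\wt\SLE$-type marginal $\tilde\nu_2$, then draw $\hat\eta_1$ from a conditional kernel. Because $\hat\eta_1$ again carries no weighting, this conditional kernel is an ordinary $\SLE_\kappa(\hat\rho^{0,R};-\hat\rho^{0,R}-2)$ probability kernel $P(\cdot\mid\hat\eta_2)$, namely the one supplied by Theorem~\hyperref[thm:igII]{A}. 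The key point is that $P$ and $\tilde\nu_2$ are, up to a single finite constant $C:=|\tilde\nu_2|$, precisely the $\hat\eta_1\mid\hat\eta_2$ conditional and the $\hat\eta_2$-marginal of the probability measure $\mathcal{Q}$ on $(\hat\eta_1,\hat\eta_2)$ implicit in the proof of Lemma~\ref{lem:onesided+}, whose $\hat\eta_2$-marginal is $\frac{1}{Z_B}\wt\SLE$ with $Z_B$ the normalizing constant of Theorem~\hyperref[thm:dapeng]{B}. Matching the parameters of the two $\wt\SLE$ descriptions gives $\tilde\nu_2=C\cdot\bigl(\text{$\hat\eta_2$-marginal of }\mathcal{Q}\bigr)$, and since the two couplings share the kernel $P$, this forces $\mathcal{N}=C\,\mathcal{Q}$. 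Taking $\hat\eta_1$-marginals, the $\hat\eta_1$-marginal of $\mathcal{N}$ equals $C\,\nu_1$.

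Comparing the two computations of the $\hat\eta_1$-marginal of $\mathcal{N}$ gives $Z(\hat\eta_1)\,d\nu_1=C\,d\nu_1$, hence $Z(\gamma,\underline\rho,\underline x)=C$ for $\nu_1$-almost every underlying curve $\gamma$. To upgrade this to \emph{every} admissible $\gamma$, I would invoke Lemma~\ref{lem:sle>0prob}: flow lines, and hence samples from $\nu_1$, approximate any admissible curve arbitrarily closely with positive probability, so the support of $\nu_1$ is dense among admissible curves; combining this with the continuity of $Z(\gamma,\underline\rho,\underline x)$ in $\gamma$ — which follows from the continuity of $\SLE_\kappa(\underline\rho)$ in the domain and force-point locations already exploited in the $\e\to0$ limit of Lemma~\ref{lem:onesided+} — forces $Z(\gamma,\underline\rho,\underline x)\equiv C$ for all admissible $\gamma$.

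The step I expect to be the main obstacle is the identity $\mathcal{N}=C\,\mathcal{Q}$, that is, verifying that the two unnormalized descriptions produced by Proposition~\ref{prop:commutation} really have proportional $\hat\eta_2$-marginals and identical $\hat\eta_1\mid\hat\eta_2$ conditionals with a \emph{global}, curve-independent constant. This is where the bookkeeping of the $\wt\SLE$ parameters $(\underline{\tilde\rho},\underline\alpha)$ against $(\underline{\hat\rho}^L,\underline{\hat\alpha}^L)$ must be carried out carefully, and where one must confirm that all the conditional total masses $Z(\hat\eta_1)$ and $|\tilde\nu_2|$ lie in $(0,\infty)$ — guaranteed by Corollary~\ref{cor} and Theorem~\hyperref[thm:dapeng]{B} — so that the disintegrations and the order-swap are measure-theoretically valid.
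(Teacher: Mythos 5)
Your overall strategy --- compute the joint law of $(\hat\eta_1,\hat\eta_2)$ in two ways, conclude that $Z$ is a.s.\ constant, then upgrade to all admissible $\gamma$ via Lemma~\ref{lem:sle>0prob} and continuity in the force-point locations --- is the same as the paper's, and your final paragraph matches its closing step. But the central comparison is broken by a misidentification of the marginal law of $\eta_1$. In the coupling of Lemma~\ref{lem:onesided+}, $\SLE_\kappa(-\rho^{0,L}-2;\rho^{0,L})$ with force points $0^{\pm}$ is the \emph{conditional} law of $\eta_1$ given $\eta_2$, not its marginal: by Proposition~\ref{prop:commutation} the marginal of $\eta_1$ is $\SLE_\kappa(-\rho^{0,L}-2;\,2+\rho^{0,L}+\underline{\rho}^R)$ with force points $(0^-;\underline{x}^R)$, and hence (by Lemma~\ref{lem:onesided+} applied to this curve) the marginal of $\hat\eta_1$ is $\frac{1}{Z_1}\wt{\SLE}_\kappa(\rho^{0,L}+2+\underline{\hat\rho}^L;-\rho^{0,L}-2;\underline{\hat\alpha}^L;0)$, a weighted measure with force points at $\underline{\hat x}^L$ --- not your $\nu_1=\SLE_\kappa(\hat\rho^{0,R};-\hat\rho^{0,R}-2)$. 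Consequently your $\mathcal{N}=\mu_{\hat\eta_1}(d\hat\eta_2)\,\nu_1(d\hat\eta_1)$ is \emph{not} proportional to the true joint law $\mathcal{Q}$: since the $\hat\eta_2\mid\hat\eta_1$ conditionals of $\mathcal{N}$ and $\mathcal{Q}$ agree, $\mathcal{N}=C\,\mathcal{Q}$ would force $dm_1/d\nu_1=Z(\hat\eta_1)/C$ for the true marginal $m_1$ of $\hat\eta_1$; as $m_1\not\propto\nu_1$, this is inconsistent with $Z$ being constant --- the very statement you are proving. The step ``the $\hat\eta_1$-marginal of $\mathcal{N}$ equals $C\,\nu_1$'' silently reuses the same misidentification.

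Relatedly, the order-swap is not licensed: in both sampling recipes of Proposition~\ref{prop:commutation} the \emph{first} curve carries the full force-point configuration $\underline{x}$ and the second carries only one side's force points plus a degenerate one. Your $\mathcal{N}$ has the opposite configuration ($\hat\eta_1$ with only $0^{\pm}$, $\hat\eta_2$ with all of $\underline{\hat x}$), so neither recipe matches and the proposition does not supply the reversed disintegration $(\tilde\nu_2,P)$ you invoke. The repair is exactly the paper's route: keep the true marginals; apply Theorem~\hyperref[thm:dapeng]{B} to $\eta_2$ together with Theorem~\hyperref[thm:igII]{A} and Proposition~\ref{prop:commutation} to get one Radon--Nikodym derivative of $\mathcal{Q}$ with respect to the reference measure $\hat{\mathcal{P}}$, namely $\frac{1}{Z(\underline{\rho}^R;\underline{x}^R)}$ times conformal-derivative factors; then apply Lemma~\ref{lem:onesided+} to the \emph{marginal} $\SLE_\kappa(-\rho^{0,L}-2;2+\rho^{0,L}+\underline{\rho}^R)$ of $\eta_1$ and Proposition~\ref{prop:commutation} again to get a second expression, $\frac{1}{Z(\eta_1,\underline{\rho},\underline{x})\,Z_1}$ times the same factors. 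Comparing yields $Z(\eta_1,\underline{\rho},\underline{x})=Z(\underline{\rho}^R;\underline{x}^R)/Z_1$ a.s., after which your last paragraph applies.
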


\begin{proof}
	Let $\eta_1$, $\eta_2$, $\hat{\eta}_1$, $\hat{\eta}_2$ be as in the proof of Lemma~\ref{lem:onesided+} and {$\bbP$ be the corresponding background probability measure}.  Let $\hat{\mathcal{P}}$ be the probability measure describing the law of $({\eta}_1^0,\eta_2^0)$ where $\eta_2^0$ is an $\SLE_\kappa(\underline{\hat{\rho}}^L)$ with force points $\underline{\hat{x}}^L$ and $\eta_1^0$ is an $\SLE_\kappa(\rho^{0,L},-\rho^{0,L}-2)$ in the right part of $\bbH\backslash\eta_2^0$ with force points $(0^-;0^+)$. Then {by applying Lemma~\ref{lem:onesided+} (to $\eta_2$) and   Proposition~\ref{prop:commutation}}, the law of $(\hat{\eta}_1,\hat{\eta}_2)$ is absolutely continuous w.r.t.\ $\hat{\mathcal{P}}$ with Radon-Nikodym derivative 
	\begin{equation}\label{eq:partition-RN}
		\frac{1}{Z(\underline{\rho}^R;\underline{x}^R)}\prod_{i=1}^\ell |\hat{x}^{i,L}\cdot\psi_{\hat{\eta}_2}'(\hat{x}^{i,L})|^{\frac{\hat{\rho}^{i,R}(\kappa-4)}{2\kappa}}.
	\end{equation} 
	{On the other hand, since the conditional law of $\eta_2$ given $\eta_1$ is $\SLE_\kappa(\rho^{0,L};\underline{\rho}^R)$, it follows from the definition of the constant $Z(\gamma,\underline{\rho},\underline{x})$ that the conditional law of $\hat\eta_2$ given $\hat\eta_1$ is $\frac{1}{Z({{\eta}}_1,\underline{\rho},\underline{x})}\wt{\SLE}_\kappa(\underline{\hat{\rho}};\underline{\hat{\alpha}})$ in the left part of $\bbH\backslash\hat\eta_1$. {Moreover, we know from Proposition~\ref{prop:commutation} that the marginal law of $\eta_1$ is $\SLE_\kappa(-\rho^{0,L}-2;2+\rho^{0,L}+\underline{\rho}^R)$ with force points $0^-;\underline{x}^R$.} By Lemma~\ref{lem:onesided+}, 
		there exists some constant $Z_1:=Z((-\rho^{0,L}-2;2+\rho^{0,L}+\underline{\rho}^R),\underline{x}^R)$ such that the marginal law of the  curve $\hat{\eta}_1$ is $1/Z_1$ times the $\wt{\SLE}_\kappa(\rho^{0,L}+2+\underline{\hat{\rho}}^L;-\rho^{0,L}-2;\underline{\hat{\alpha}}^L;0)$ with force points $(\underline{\hat{x}}^L;0^+)$. Together with Proposition~\ref{prop:commutation}, we infer that the law of $(\hat{\eta}_1,\hat{\eta}_2)$ is absolutely continuous w.r.t.\ $\hat{\mathcal{P}}$ with Radon-Nikodym derivative}
	\begin{equation}\label{eq:partition-RN-2}
		\frac{1}{Z(\eta_1,\underline{\rho},\underline{x})Z((-\rho^{0,L}-2;2+\rho^{0,L}+\underline{\rho}^R),\underline{x}^R)}\prod_{i=1}^\ell |\hat{x}^{i,L}\cdot\psi_{\hat{\eta}_2}'(\hat{x}^{i,L})|^{\frac{\hat{\rho}^{i,R}(\kappa-4)}{2\kappa}}.
	\end{equation} 
It then follows by comparing~\eqref{eq:partition-RN} with~\eqref{eq:partition-RN-2} that  $Z(\eta_1,\underline{\rho},\underline{x}) =\frac{Z(\underline{\rho}^R;\underline{x}^R)}{Z((-\rho^{0,L}-2;2+\rho^{0,L}+\underline{\rho}^R),\underline{x}^R)}$ a.s..  We condition on the positive probability event $E_\e$ for $(\eta_1,\gamma)$ as in Lemma~\ref{lem:sle>0prob}. Then the domain $D_{\eta_1}^{j,R}$ is converging in Caratheodory topology to $D_{\gamma}^{j,R}$ as $\e\to 0$, and the claim follows {from a similar argument as in the end of the proof of Lemma~\ref{lem:onesided+} via $\SLE_\kappa(\underline{\rho})$ continuity over the location of force points}.
\end{proof}

\begin{proposition}\label{prop:kappa<4}
Theorem~\ref{thm:main} holds for $\kappa\in (0,4)$.
\end{proposition}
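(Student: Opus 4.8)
The plan is to realize $\eta\sim\SLE_\kappa(\underline\rho)$ as the first coordinate of a commutation pair whose two conditional laws reduce, after time reversal, to the one-sided case already settled in Lemma~\ref{lem:onesided+} and Corollary~\ref{cor}. Fix an auxiliary weight $\rho\in(-2,0]$ small enough that $\tilde\rho=(\rho+2+\underline\rho^L;-\rho-2+\underline\rho^R)$ still satisfies~\eqref{eq:continuation-threshold}; this is possible since $\min_{0\le j\le\ell}\sum_{i=0}^j\rho^{i,R}>-2$, so one may take $\rho\in(-2,0]$ smaller than this minimum. Sample the commutation pair $(\eta_1,\eta_2)$ of Proposition~\ref{prop:commutation} with $\underline\alpha=0$: then $\eta_1\sim\SLE_\kappa(\underline\rho)$ with force points $\underline x$, conditionally on $\eta_1$ the curve $\eta_2$ is $\SLE_\kappa(\rho;\underline{\tilde\rho}^R)$ in the right part of $\bbH\backslash\eta_1$ with force points $(0^-;\underline x^R)$, and conditionally on $\eta_2$ the curve $\eta_1$ is $\SLE_\kappa(\underline\rho^L;\rho)$ in the left part of $\bbH\backslash\eta_2$ with force points $(\underline x^L;0^+)$. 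Since $\rho\le0$, both conditional laws are of the one-sided type.

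Set $(\hat\eta_1,\hat\eta_2)=(\mathcal R(J\circ\eta_1),\mathcal R(J\circ\eta_2))$ and let $\hat{\mathcal M}$ be their joint law; note that $\hat\eta_1=\mathcal R(J\circ\eta)$ is the object of interest. Applying Corollary~\ref{cor} with $\gamma=\eta_1$, together with its left--right reflection with $\gamma=\eta_2$, and invoking Lemma~\ref{lem:partition-function} so that the normalizing constants do not depend on the conditioning curve, I read off the two conditional laws of $\hat{\mathcal M}$: conditionally on $\hat\eta_1$, the curve $\hat\eta_2$ is $\tfrac1{Z_R}\wt{\SLE}_\kappa(\cdots)$ in the relevant component of $\bbH\backslash\hat\eta_1$, and conditionally on $\hat\eta_2$, the curve $\hat\eta_1$ is $\tfrac1{Z_L}\wt{\SLE}_\kappa(\cdots)$ in the relevant component of $\bbH\backslash\hat\eta_2$, with $Z_R,Z_L$ deterministic and the parameters matching $\underline{\hat\rho},\underline{\hat\alpha}$ via $\hat\alpha^{i,q}=\tfrac{\hat\rho^{i,q}(\kappa-4)}{2\kappa}$.

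To identify $\hat{\mathcal M}$, I compare it with the candidate $\mathcal N$ produced by Proposition~\ref{prop:commutation} in the reversed configuration, whose primary marginal is $\tfrac1Z\wt{\SLE}_\kappa(\underline{\hat\rho};\underline{\hat\alpha})$ with force points $\underline{\hat x}$ and whose two conditional laws coincide, after the (routine) parameter bookkeeping, with those of $\hat{\mathcal M}$ above. It then remains to show that these common conditional laws determine the joint law uniquely, which I reduce to the unweighted resampling Lemma~\ref{lem:resampling}. Let $W=\prod_{i\ge1}|\hat x^{i,L}(\psi_{\hat\eta_2}^{i,L})'(\hat x^{i,L})|^{\hat\alpha^{i,L}}\prod_{i\ge1}|\hat x^{i,R}(\psi_{\hat\eta_1}^{i,R})'(\hat x^{i,R})|^{\hat\alpha^{i,R}}$ be the conformal-derivative weight of the reversed commutation relation, with the left force points carried by $\hat\eta_2$ and the right ones by $\hat\eta_1$ exactly as in the third bullet of Proposition~\ref{prop:commutation}. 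Dividing $\hat{\mathcal M}$ by $W$ and using the chain-rule identity $\psi_{\hat\eta_2}^{i,L}=\psi_{\hat\eta_2|\hat\eta_1}^{i,L}\circ\psi_{\hat\eta_1}^{j,L}$ (and its right-sided analogue) from the proof of Proposition~\ref{prop:commutation}, the factor $W$ cancels precisely the $\wt{\SLE}$-weights in both conditional laws, so that $W^{-1}\hat{\mathcal M}$ has ordinary $\SLE_\kappa(\cdots)$ conditional laws; the same reweighting turns $\mathcal N$ into the unweighted reversed commutation pair. Lemma~\ref{lem:resampling} then gives $W^{-1}\hat{\mathcal M}=W^{-1}\mathcal N$, hence $\hat{\mathcal M}=\mathcal N$, and marginalizing yields that $\mathcal R(J\circ\eta)$ has law $\tfrac1Z\wt{\SLE}_\kappa(\underline{\hat\rho};\underline{\hat\alpha})$ with force points $\underline{\hat x}$.

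The step I expect to be most delicate is this final resampling identification in the presence of the $\alpha$-weights and of boundary contacts. On the one hand, the cancellation via $W$ must combine the chain-rule factorization with the constancy of $Z_R,Z_L$ (Lemma~\ref{lem:partition-function}) uniformly over conditioning curves. On the other hand, when the curves touch $\bbR$ or each other the conditional $\SLE_\kappa(\underline\rho)$'s can hit their force points, so Lemma~\ref{lem:resampling} should be applied to the laws conditioned to avoid $\e$-neighborhoods of the force points via Lemma~\ref{lem:resampling-a} and Lemma~\ref{lem:sle>0prob}, followed by an $\e\to0$ limit using continuity of $\SLE_\kappa(\underline\rho)$ in the force-point locations, just as at the end of the proof of Lemma~\ref{lem:onesided+}.
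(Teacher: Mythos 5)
Your proposal is correct and follows essentially the same route as the paper: a commutation pair with a small auxiliary weight, the one-sided reversal (Lemma~\ref{lem:onesided+}/Corollary~\ref{cor}) together with the constancy of the partition function (Lemma~\ref{lem:partition-function}) to read off the two reversed conditional laws, and the resampling lemmas with the $\e$-avoidance regularization and $\e\to 0$ limit to pin down the joint law. The only real difference is one of direction and bookkeeping: the paper starts from $\SLE_\kappa(\underline{\hat\rho})$ on the reversed side and builds the weighted, $\e$-conditioned pair $\mathcal{Q}_\e$ so that the \emph{forward} conditional laws come out unweighted and Lemma~\ref{lem:resampling-a} applies directly, whereas you start from the forward pair and divide out the weight $W$ via the chain-rule identity of Proposition~\ref{prop:commutation} to reach the same unweighted resampling statement.
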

\begin{proof}
{The proof is organized as follows. We first construct a pair of reversed curves $(\hat\eta_1,\hat\eta_2)$ by Proposition~\ref{prop:commutation}, and then apply Lemma~\ref{lem:onesided+} to get the conditional laws of $\eta_i := \mathcal{R}(\eta_i)$ given $\eta_j$ for $1\le i\neq j \le 2$.  Finally we apply Lemma~\ref{lem:resampling-a} to identify the law of the forward curves $(\eta_1,\eta_2)$ with the usual $\SLE_\kappa(\underline{\rho})$.  }	

Let {$\hat\eta_1$ be an $\SLE_\kappa(\underline{\hat\rho})$ process in $\bbH$ from 0 to $\infty$ with force points $\underline{\hat x}$}. Pick $0<a<2$ such that the weights $\underline{\tilde{\rho}} = (a+\underline{\hat\rho}^L;-a+\underline{\hat\rho}^R)$ also satisfy the bound~\eqref{eq:continuation-threshold}. Conditioned on $\hat\eta_1$, sample an $\SLE_\kappa(a-2;\underline{\tilde{\rho}}^R)$ process $\hat\eta_2$ in the right component of $\bbH\backslash\hat\eta_1$ with force points $(0^-;\underline{\hat x}^R)$. For $\e>0$, let ${D}_{\e} = \cup_{q\in\{L,R\}}\cup_{i\ge1}B({x}^{i,L},\e)$, $\hat D_{\e} = J({D}_{\e})$ and suppose $\e$ is sufficiently small such that $0,\infty\notin D_{\e}$. Let $\hat F_{\e}$ be the event where $(\hat\eta_1,\hat\eta_2)$ are disjoint from $\overline{\hat D}_\e$. Define the conformal maps $\psi_{\hat\eta_j}^{i,q}$ as in Proposition~\ref{prop:commutation}. Let $\hat{\mathcal{P}}$ be the law of $(\hat\eta_1,\hat\eta_2)$, and define the probability measure $\mathcal{Q}_{\e}$ on pairs of non-crossing simple curves from 0 to $\infty$ by
\begin{equation}\label{eq:twocurveRN}
	\frac{d\mathcal{Q}_{\e}}{d\hat{\mathcal{P}}}(\hat{\eta}_1,\hat{\eta}_2) =\frac{1}{Z_\e} 1_{\hat F_{\e}}\cdot\prod_{i=1}^k|\hat x^{i,L}\cdot(\psi_{\hat{\eta}_1}^{i,L})'(\hat x^{i,L})|^{\frac{\hat\rho^{i,L}(\kappa-4)}{2\kappa}}\cdot\prod_{j=1}^\ell|\hat x^{j,R}\cdot(\psi_{\hat\eta_2}^{j,R})'(\hat x^{j,R})|^{\frac{\hat\rho^{j,R}(\kappa-4)}{2\kappa}}
\end{equation}
{where $Z_\e$ is the
	normalizing constant making $\mathcal{Q}_\e$ a probability measure.}
Observe that on the event $\hat F_{\e}$, by Koebe's 1/4 theorem, there exists some constant $M$ depending only on $\e$ and $\underline{x}$ such that $1/M<|(\psi_{\hat\eta_j}^{i,q})'(\hat x^{i,q})|<M$, which implies that the constant $Z_\e$ is well-defined. Moreover, by Proposition~\ref{prop:commutation}, under the measure $\mathcal{Q}_{\e}$, conditioned on $\hat\eta_1$, $\hat\eta_2$ is an $\wt{\SLE}_\kappa(a-2;\underline{\tilde{\rho}}^R;\underline{\hat\alpha}^R)$ process in the right component of $\bbH\backslash\hat\eta_1$ with force points $(0^-;\underline{x}^R)$ conditioned on not hitting $\overline{\hat D}_{\e}$, while conditioned on $\hat\eta_2$, $\hat\eta_1$ is an $\wt{\SLE}_\kappa(\underline{\hat\rho}^L;a-2;\underline{\hat\alpha}^L)$ process in the left component of $\bbH\backslash\hat\eta_2$ with force points $(\underline{\hat x}^L;0^+)$ conditioned on not hitting $\overline{\hat D}_\e$, where $\hat\alpha^{i,q}=\frac{\hat\rho^{i,q}(\kappa-4)}{2\kappa}$.

For $i=1,2$, let ${\eta}_i=\mathcal{R}(J\circ\hat\eta_i)$. Then by Lemma~\ref{lem:onesided+}, under the measure $\mathcal{Q}_\e$, given ${\eta}_1$, the conditional law of ${\eta}_2$ is the $\SLE_\kappa(-a+\underline{{\rho}}^L;a-2)$ process in the left component of $\bbH\backslash{\eta}_1$ with force points $(\underline{{x}}^L;0^+)$ conditioned on not hitting $\overline{{D}}_{\e}$, and the conditional law of ${\eta}_1$ given ${\eta}_2$ is the  $\SLE_\kappa(a-2;\underline{{\rho}}^R)$ process in the right component of $\bbH\backslash{\eta}_2$ with force points $(0^-;\underline{{x}}^R)$ conditioned on not hitting $\overline{{D}}_\e$. Let $\tilde\eta_1$ be an independent $\SLE_\kappa(\underline{{\rho}})$ process in $\bbH$ from 0 to $\infty$ with force points $\underline{{x}}$, and sample an  $\SLE_\kappa(-a+\underline{{\rho}}^L;a-2)$ process $\tilde{\eta_2}$ in the left component of $\bbH\backslash\tilde{\eta}_1$ with force points $(\underline{{x}}^L;0^+)$. Therefore $(\eta_1,\eta_2)$ and $(\tilde\eta_1,\tilde\eta_2)$ satisfy the same resampling properties as in Lemma~\ref{lem:resampling-a}. Then by Lemma~\ref{lem:resampling-a}, the joint law of $({\eta}_1,{\eta}_2)$ agrees with that of $(\tilde{\eta}_1,\tilde{\eta}_2)$ conditioned on $\{\tilde{\eta}_1\cap \overline{{D}}_\e = \tilde \eta_2\cap \overline{{D}}_{\e}=\emptyset \}$. In particular, the marginal law of ${\eta}_1$ under $\mathcal{Q}_\e$ is the $\SLE_\kappa(\underline{{\rho}})$ process conditioned on not hitting $\overline{{D}}_\e$ and weighted by the probability where the $\SLE_\kappa(-a+\underline{{\rho}}^L;a-2)$ process in the left component of $\bbH\backslash{\eta}_1$ is disjoint from $\overline{{D}}_\e$.

{On the other hand, by Proposition~\ref{prop:commutation}, a sample $(\hat\eta_1,\hat\eta_2)$ from $\mathcal{Q}_\e$ can be produced by (i) sampling $\hat\eta_1$ from $\wt{\SLE}_\kappa(\underline{\hat\rho};\underline{\hat\alpha})$ process conditioned on not hitting $\overline{\hat{D}}_\e$ (ii) weighting the law of $\hat\eta_1$ by $Z_0(\hat\eta_1)$, where $Z_0(\hat\eta_1)$ is the measure of an $\wt{\SLE}_\kappa(a-2;\underline{\tilde{\rho}}^R;0;\underline{\hat\alpha}^R)$ process in the right component of $\bbH\backslash\hat\eta_1$ with force points $(0^-;\underline{x}^R)$ being disjoint from $\overline{\hat{D}}_\e$ and (iii) sampling an $\wt{\SLE}_\kappa(a-2;\underline{\tilde{\rho}}^R;0;\underline{\hat\alpha}^R)$ process $\hat\eta_2$ in the right component of $\bbH\backslash\hat\eta_1$ with force points $(0^-;\underline{x}^R)$ conditioned on not hitting $\overline{\hat{D}}_\e$. Meanwhile, by Lemma~\ref{lem:onesided+}, $Z_0(\hat\eta_1)$ is equal to $Z(\eta_1,(-a+\underline{{\rho}}^L;a-2),\underline{x}^R)$ times the probability of an $\SLE_\kappa(-a+\underline{{\rho}}^L;a-2)$ process in the left component of $\bbH\backslash{\eta}_1$ being disjoint from $\overline{{D}}_\e$.} 
By Lemma~\ref{lem:sle>0prob}, the latter probability is positive for any fixed ${\eta}_1$, while by Lemma~\ref{lem:partition-function}, the constant $Z(\eta_1,(-a+\underline{{\rho}}^L;a-2),\underline{x}^R)$ is independent of $\eta_1$. Therefore by comparing the marginal laws of $\eta_1$ and $\hat\eta_1$,  we conclude that under $J(z)=-1/z$, the time reversal of an $\SLE_\kappa(\underline{{\rho}})$ process with force point $\underline{{x}}$ conditioned on not hitting $\overline{{D}}_\e$ agree with the $\wt{\SLE}_\kappa(\underline{\hat\rho};\underline{\hat\alpha})$ process with force point $\underline{\hat x}$ conditioned on not hitting $\overline{\hat{D}}_\e$. Since $\e>0$ can be arbitrarily small, the claim therefore follows. 
\end{proof}

For $\kappa\in (4,8)$, the argument is based on the following \emph{SLE duality} argument, which follows from~\cite[Theorem 5.1]{zhan2008duality} and~\cite[Theorem 1.4, Proposition 7.30]{MS16a}.  
\begin{proposition}\label{prop:duality}
Let $\kappa\in (4,8]$, $\tilde{\kappa}=\frac{16}{\kappa}$ and $\underline{\rho}$ satisfying~\eqref{eq:continuation-threshold} and $x^{k,L}<...<x^{0,L}=0^-<x^{0,R}=0^+<...<x^{\ell,R}$. Let $\rho^{k+1,L}=-\sum_{i=0}^k\rho^{i,L}$ and $\rho^{\ell+1,R}=-\sum_{i=0}^\ell\rho^{i,R}$. Let $\hat{\rho}^{i,L} = -\rho^{\ell+1-i,R}$ for $0\le i\le \ell$ and $\hat{\rho}^{j,R} = -\rho^{k+1-j,L}$ for $0\le j\le k$. Let $\eta'$ be an $\SLE_{\kappa}(\underline{\rho})$ process in $\bbH$ from 0 to $\infty$.  Then the left boundary $\eta_L$ of $\eta'$ is an $\SLE_{\tilde{\kappa}}(\frac{\tilde{\kappa}}{2}-2+\frac{\tilde{\kappa}}{4}\underline{\hat{\rho}}^{L};\tilde{\kappa}-4+\frac{\tilde{\kappa}}{4}\underline{\hat{\rho}}^{R})$ process from $\infty$ to 0 with force points $\hat{\underline{x}}:=(+\infty, x^{\ell,R},...,x^{1,R};-\infty, x^{k,L},...,x^{1,L})$, and the right boundary $\eta_R$ of $\eta'$ is an $\SLE_{\tilde{\kappa}}({\tilde{\kappa}}-4+\frac{\tilde{\kappa}}{4}\underline{\hat{\rho}}^{L};\frac{\tilde{\kappa}}{2}-2+\frac{\tilde{\kappa}}{4}\underline{\hat{\rho}}^{R})$ process from $\infty$ to 0 with force points $\hat{\underline{x}}$. Moreover, conditioned on $\eta_L$ and $\eta_R$, $\eta'$ is an $\SLE_\kappa(\frac{\kappa}{2}-4;\frac{\kappa}{2}-4)$ process independently in each connected component of $\bbH\backslash(\eta_L\cup\eta_R)$ between $\eta_L$ and $\eta_R$, and conditioned on $\eta_L$, $\eta_R$ is an $ \SLE_{\tilde{\kappa}}({\tilde{\kappa}}-4+\frac{\tilde{\kappa}}{4}\underline{\hat{\rho}}^{L};-\frac{\tilde{\kappa}}{2})$ process in $\bbH\backslash\eta_L$ to the right of $\eta_L$ from $\infty$ to 0 with force points $(+\infty, x^{\ell,R},...,x^{1,R};-\infty)$.
\end{proposition}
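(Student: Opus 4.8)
The plan is to realize $\eta'$ as a counterflow line in an imaginary geometry, identify its two outer boundaries as flow lines of the same field, and then translate the flow-line boundary data into the asserted $\SLE_{\tilde\kappa}(\underline{\hat\rho})$ weight vectors; the conditional laws then come from the duality coupling of~\cite{zhan2008duality} together with the flow-line resampling of Section~\ref{subsec:pre-resample}.

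First I would couple $\eta'$ with a GFF. Let $h$ be a GFF on $\bbH$ whose (piecewise constant) boundary data is specified exactly as in~\eqref{eq:flowlinecouple}--\eqref{eq:ig-gff}, adapted to the counterflow-line convention (so that $\eta'$ is coupled with $-h$), with $\lambda'=\pi/\sqrt{\kappa}$ and a jump proportional to $\rho^{i,q}$ across each $x^{i,q}$. By~\cite[Theorem 1.1]{MS16a} this realizes $\eta'$ as the counterflow line of $h$ from $0$ to $\infty$. The auxiliary force points $x^{k+1,L}=-\infty$, $x^{\ell+1,R}=+\infty$ with weights $\rho^{k+1,L}=-\sum_i\rho^{i,L}$, $\rho^{\ell+1,R}=-\sum_i\rho^{i,R}$ merely record the boundary data on the two unbounded arcs and will reappear as genuine force points (at $\mp\infty$) for the boundary curves.

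Next I would invoke imaginary-geometry duality. By~\cite[Theorem 1.4]{MS16a} and~\cite[Proposition 7.30]{MS16a}, the left and right boundaries $\eta_L,\eta_R$ of the counterflow line are the flow lines of $h$ of angles $+\tfrac{\pi}{2}$ and $-\tfrac{\pi}{2}$, each run from $\infty$ to $0$, and each such flow line is an $\SLE_{\tilde\kappa}(\underline{\hat\rho})$ process. To extract the weights I would read off the boundary data of $h$ on the two sides of $\eta_L$ and of $\eta_R$ and apply the flow-line dictionary (a height gap of $\lambda(1+\hat\rho)$ across consecutive force points, with $\lambda=\pi/\sqrt{\tilde\kappa}$ and $\chi=2/\sqrt{\tilde\kappa}-\sqrt{\tilde\kappa}/2$). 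Because each boundary curve is traversed from $\infty$ to $0$, the force points are met in reverse order and with reversed orientation, which produces exactly the index reversal and sign flip $\hat\rho^{i,L}=-\rho^{\ell+1-i,R}$, $\hat\rho^{j,R}=-\rho^{k+1-j,L}$ and the relabelled locations $\hat{\underline x}$; the multiplier $\tfrac{\tilde\kappa}{4}$ and the base weights $\tfrac{\tilde\kappa}{2}-2$ and $\tilde\kappa-4$ arise because the outer side of each boundary curve abuts the original domain boundary (giving $\tfrac{\tilde\kappa}{2}-2$, the ordinary SLE-duality degenerate weight) while the inner side abuts the fractal trace of $\eta'$ (giving $\tilde\kappa-4$). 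This explains the asymmetry between the two weight vectors for $\eta_L$ and $\eta_R$.

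Finally, for the conditional laws I would combine two ingredients. Conditioning on both $\eta_L$ and $\eta_R$, the intermediate curve $\eta'$ is trapped in the complementary components lying between them, and by~\cite[Theorem 5.1]{zhan2008duality} it fills each such component independently as a chordal $\SLE_\kappa(\tfrac{\kappa}{2}-4;\tfrac{\kappa}{2}-4)$ (the doubly degenerate, boundary-filling regime). Conditioning on $\eta_L$ alone, the curve $\eta_R$ is the angle-$(-\tfrac{\pi}{2})$ flow line of $h$ restricted to the component of $\bbH\setminus\eta_L$ on the appropriate side, so the flow-line boundary rules of Section~\ref{subsec:pre-resample} identify it as an $\SLE_{\tilde\kappa}(\tilde\kappa-4+\tfrac{\tilde\kappa}{4}\underline{\hat\rho}^L;-\tfrac{\tilde\kappa}{2})$ process, the degenerate weight $-\tfrac{\tilde\kappa}{2}$ being the flow-line height that $\eta_L$ induces along the side of $\eta_R$ facing it. I expect the main obstacle to be purely the bookkeeping of boundary data: converting the piecewise-constant values on the two sides of each flow line into the precise weight vectors, keeping all signs consistent under the $0$-to-$\infty$ versus $\infty$-to-$0$ orientation reversal (which swaps the roles of the $L$ and $R$ labels), and correctly tracking the degenerate force points at $0^\pm$ and the auxiliary force points at $\pm\infty$. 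Once this dictionary is set up, every assertion is a direct application of the cited imaginary-geometry and duality results.
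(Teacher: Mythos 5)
Your approach is the same as the paper's: Proposition~\ref{prop:duality} is stated there without a written proof, attributed directly to \cite[Theorem 5.1]{zhan2008duality} and \cite[Theorem 1.4, Proposition 7.30]{MS16a}, which is precisely the combination you invoke (counterflow-line coupling, the angle-$\pm\frac{\pi}{2}$ flow lines as the two boundaries, the flow-line dictionary for the weights, and the duality coupling for the conditional law of $\eta'$ between them).

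One concrete slip in your bookkeeping heuristic: you have the two base weights swapped. The side of each boundary curve that directly abuts the original domain boundary (e.g.\ $\bbR_-$ for $\eta_L$, which is the $R$-side of the curve run from $\infty$ to $0$) carries the base weight $\tilde\kappa-4$, while the side facing the trace of $\eta'$ carries $\frac{\tilde\kappa}{2}-2$; your parenthetical assigns them the other way around. A quick consistency check: with a single pair of weights $(\rho_1;\rho_2)$ at $0^\pm$, the proposition gives $\eta_L$ the weight $\tilde\kappa-4+\frac{\tilde\kappa}{4}\rho_1$ on its $\bbR_-$ side, and the condition for $\eta_L$ to hit $\bbR_-$, namely $\tilde\kappa-4+\frac{\tilde\kappa}{4}\rho_1<\frac{\tilde\kappa}{2}-2$, is equivalent to $\rho_1<\frac{\kappa}{2}-2$, exactly the condition for $\eta'$ to hit $\bbR_-$; with your assignment the condition would degenerate to $\rho_1<0$, which is wrong. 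Since you correctly identify this dictionary as the entire content of the proof, it is worth getting right, but it does not affect the validity of the strategy, and the weight vectors you actually quote (from the statement) are the correct ones.
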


\begin{proposition}\label{prop:kappa>4}
Theorem~\ref{thm:main} holds for $\kappa\in (4,8]$.
\end{proposition}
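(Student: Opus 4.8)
The plan is to reduce the range $\kappa\in(4,8]$ to the already-established range $\tilde{\kappa}:=16/\kappa\in(0,4)$ by means of the $\SLE$ duality of Proposition~\ref{prop:duality}. Writing $\eta'$ for the $\SLE_\kappa(\underline{\rho})$ process from $0$ to $\infty$, I would couple it with its left and right boundaries $\eta_L,\eta_R$ as in Proposition~\ref{prop:duality}. These are $\SLE_{\tilde{\kappa}}$-type curves from $\infty$ to $0$; since $\tilde{\kappa}\in[2,4)$ they lie in the regime covered by Proposition~\ref{prop:kappa<4}. Proposition~\ref{prop:duality} lets me build $\eta'$ in stages: first $\eta_L$, then $\eta_R$ given $\eta_L$, and finally $\eta'$ given $(\eta_L,\eta_R)$ as independent $\SLE_\kappa(\tfrac{\kappa}{2}-4;\tfrac{\kappa}{2}-4)$ processes in the pockets between the two boundaries. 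The idea is to apply $\mathcal{R}(J\circ\cdot)$ to each stage and recombine, using that $J$ and $\mathcal{R}$ commute and that $J$ interchanges the two sides of $\bbR$.

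Two observations make the three stages tractable. First, the conditional fill-in $\SLE_\kappa(\tfrac{\kappa}{2}-4;\tfrac{\kappa}{2}-4)$ is symmetric under $L\leftrightarrow R$, so by Theorem~\hyperref[thm:igII]{A} its image under $\mathcal{R}(J\circ\cdot)$ is again $\SLE_\kappa(\tfrac{\kappa}{2}-4;\tfrac{\kappa}{2}-4)$ and carries no weighting. Second, the conformal-derivative weight in Definition~\ref{def:sle-weight-confradius} can be rewritten in terms of the boundary curves: the component $D_{\eta'}^{i,R}$ coincides with the component of $\bbH\setminus\eta_R$ lying to the right of $\eta_R$ and containing $x^{i,R}$, and because $\eta_R$ traverses the relevant arc in the direction opposite to $\eta'$, one has $\sigma_{\eta_R}^{i,R}=\xi_{\eta'}^{i,R}$ and $\xi_{\eta_R}^{i,R}=\sigma_{\eta'}^{i,R}$, so that $\psi_{\eta_R}^{i,R}$ and $\psi_{\eta'}^{i,R}$ differ by the involution $w\mapsto 1/w$ of $\bbH$ fixing $1$. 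Since that involution has derivative $-1$ at $1$, we get $|x^{i,R}(\psi_{\eta'}^{i,R})'(x^{i,R})|=|x^{i,R}(\psi_{\eta_R}^{i,R})'(x^{i,R})|$, and likewise on the left. Hence the weighting defining $\wt{\SLE}_\kappa(\underline{\hat{\rho}};\underline{\hat{\alpha}})$ is a function of the boundary curves alone and factors out of the fill-in stage.

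Applying the $\tilde{\kappa}\in(0,4)$ reversibility (Proposition~\ref{prop:kappa<4}) to $\eta_L$ and $\eta_R$ then gives the laws of the reversed boundaries as $\wt{\SLE}_{\tilde{\kappa}}$-type processes. Checking that the weight exponents match is the step I expect to be the main obstacle. Proposition~\ref{prop:kappa<4} produces $\tilde{\kappa}$-exponents of the form $\tfrac{\hat\rho_{\tilde{\kappa}}(\tilde{\kappa}-4)}{2\tilde{\kappa}}$, where $\hat\rho_{\tilde{\kappa}}$ is obtained from the boundary weights of Proposition~\ref{prop:duality} via the reversal convention. Tracking the duality scaling $\rho_{\tilde{\kappa}}=\tfrac{\tilde{\kappa}}{4}\hat\rho$, the negation built into the reversal, and $\tilde{\kappa}=16/\kappa$, the computation
$$\frac{\left(-\tfrac{\tilde{\kappa}}{4}\hat\rho\right)(\tilde{\kappa}-4)}{2\tilde{\kappa}}=\frac{\hat\rho(\kappa-4)}{2\kappa}$$
recovers exactly the target exponent $\hat\alpha^{i,q}=\tfrac{\hat\rho^{i,q}(\kappa-4)}{2\kappa}$. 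Combined with the identity of conformal derivatives from the previous paragraph, this shows that the weight carried by the reversed boundaries is precisely the $\underline{\hat{\alpha}}$-weight.

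Finally I would recombine. By Proposition~\ref{prop:duality} applied to the hatted data, $\wt{\SLE}_\kappa(\underline{\hat{\rho}};\underline{\hat{\alpha}})$ admits the same decomposition: boundaries that are $\wt{\SLE}_{\tilde{\kappa}}$-type with the weights just identified, and, conditionally between them, $\SLE_\kappa(\tfrac{\kappa}{2}-4;\tfrac{\kappa}{2}-4)$. Since $\mathcal{R}(J\circ\eta')$ has matching boundary laws and matching conditional fill-in, and since the weighting depends only on the boundaries, the ``sample the boundaries, then fill in'' characterization furnished by Proposition~\ref{prop:duality} determines the law of $\mathcal{R}(J\circ\eta')$ to be $\tfrac1Z\,\wt{\SLE}_\kappa(\underline{\hat{\rho}};\underline{\hat{\alpha}})$. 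Finiteness of $Z$ is verified as in Proposition~\ref{prop:kappa<4}, by restricting to the positive-probability events on which the boundaries avoid fixed neighborhoods of the force points (so that the weights are bounded via Koebe distortion) and then letting these neighborhoods shrink. For the endpoint $\kappa=8$ one has $\tilde{\kappa}=2$ and $\SLE_8(\tfrac{\kappa}{2}-4;\tfrac{\kappa}{2}-4)=\SLE_8$, so the same argument applies using the reversibility of $\SLE_8$ together with the $\kappa=8$ case of Theorem~\hyperref[thm:igII]{A}.
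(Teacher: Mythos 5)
Your overall strategy is the same as the paper's: reduce to $\tilde\kappa=16/\kappa\in(0,4)$ via Proposition~\ref{prop:duality}, reverse the boundary curves using Proposition~\ref{prop:kappa<4}, observe that the conformal-derivative weight is a function of the boundaries alone, match the exponents (your computation $\frac{(-\frac{\tilde{\kappa}}{4}\hat\rho)(\tilde\kappa-4)}{2\tilde\kappa}=\frac{\hat\rho(\kappa-4)}{2\kappa}$ is correct), and note that the conditional fill-in $\SLE_\kappa(\frac\kappa2-4;\frac\kappa2-4)$ is reversible by Theorem~A. However, there is a genuine gap in the recombination step. Knowing the reversed laws of $\eta_L$ and $\eta_R$ separately, or even ``applying $\mathcal{R}(J\circ\cdot)$ to each stage,'' does not by itself identify the \emph{joint} law of $(\mathcal{R}(\eta_L),\mathcal{R}(\eta_R))$. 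The issue is that Lemma~\ref{lem:onesided+} (equivalently Corollary~\ref{cor}) reverses the conditional law of $\eta_R$ given $\eta_L$ only up to a normalizing constant $Z(\eta_L,\cdot,\cdot)$ that a priori depends on $\eta_L$; if that constant were genuinely $\eta_L$-dependent, the product ``reversed marginal of $\eta_L$ times reversed conditional of $\eta_R$ given $\eta_L$'' would differ from the true joint law of the reversed pair by an uncontrolled Radon--Nikodym factor. The paper closes this precisely by invoking Lemma~\ref{lem:partition-function} (noting $-\frac{\tilde\kappa}{2}<0$ so the lemma applies) to conclude $Z(\eta_L)$ is constant, and then uses Proposition~\ref{prop:commutation} to identify the resulting two-stage sampling of weighted processes with the weighted two-stage sampling of unweighted processes appearing in the duality decomposition of $\wt{\SLE}_\kappa(\underline{\hat\rho};\underline{\hat\alpha})$. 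Your ``the weight factors out of the fill-in stage'' observation is adjacent to this chain-rule identity but does not substitute for either step; you need to add both.

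A smaller point: the map you invoke to compare $\psi_{\eta'}^{i,R}$ with $\psi_{\eta_R}^{i,R}$ cannot be $w\mapsto 1/w$, which does not preserve $\bbH$ (no automorphism of $\bbH$ sends $(0,1,\infty)$ to $(\infty,1,0)$ since it reverses the boundary orientation). The relevant point is that when the roles of first and last traced boundary points swap, the force point switches from the $+1$ to the $-1$ normalization of Definition~\ref{def:sle-weight-confradius}, and the two uniformizations differ by $w\mapsto -1/w$, whose derivative has modulus $1$ at $w=\pm1$; your conclusion that the quantities $|x^{i,q}(\psi^{i,q})'(x^{i,q})|$ agree survives, but the justification as written is incorrect.
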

\begin{proof}
Let $\eta'$ be an $\SLE_{\kappa}(\underline{\rho})$ process in $\bbH$ from 0 to $\infty$ with force point $\underline{x}$, $\eta_L,\eta_R$ be its left and right boundary, and $\mathcal{R}(\eta')$ be the time-reversal of $\eta'$. Let $\tilde{\kappa}=\frac{16}{\kappa}$, $\tilde{x}^{i,L}=x^{\ell+1-i,R}$ for $0\le i\le \ell$ and $\tilde{x}^{j,R}=x^{k+1-j,L}$ for $0\le j\le k$, where $x^{k+1,L}=-\infty$ and $x^{\ell+1,R}=+\infty$. Let $\hat{\rho}^{i,q}$ be as in the statement of Proposition~\ref{prop:duality}. {Note that $\underline{\hat{x}} = J(\underline{\tilde{x}})$.}
Then by Proposition~\ref{prop:kappa<4} and Proposition~\ref{prop:duality}, the law of the left boundary $\mathcal{R}(\eta_L)$ of $\mathcal{R}(\eta')$ is proportional to the $\wt{\SLE}_{\tilde{\kappa}} (\tilde{\kappa}-4+\frac{\tilde{\kappa}}{4}\underline{\rho}^L;\frac{\tilde{\kappa}}{2}-2+\frac{\tilde{\kappa}}{4}\underline{\rho}^R;\underline{\alpha}^L;\underline{\alpha}^R)$ from 0 to $\infty$ with force points $\underline{x}$ and $\alpha^{i,q} = \frac{\tilde{\kappa}}{4}\rho^{i,q}\cdot\frac{\tilde{\kappa}-4}{2\tilde{\kappa}} = -\rho^{i,q}\frac{4-\kappa}{2\kappa}$. Likewise, conditioned on $\mathcal{R}(\eta_L)$, the law of the right boundary $\mathcal{R}(\eta_R)$ of $\mathcal{R}(\eta')$ is $\frac{1}{Z({\eta}_L)}\wt{\SLE}_{\tilde{\kappa}}(-\frac{\tilde{\kappa}}{2};{\tilde{\kappa}}-4+\frac{\tilde{\kappa}}{4}\underline{{\rho}}^{R};0;\underline{\alpha}^R)$ process from 0 to $\infty$ to the right of $\bbH\backslash\mathcal{R}(\eta_L)$ with force points $(0^-;\underline{x}^R)$. Moreover, by Lemma~\ref{lem:partition-function}, since $-\frac{\tilde{\kappa}}{2}<0$, the constant $Z({\eta}_L)$ does not depend on $\eta_L$.

On the other hand, let $\tilde{\eta}'$ be an $\wt{\SLE}_\kappa(\underline{\hat{\rho}};\underline{\hat{\alpha}})$ process in $\bbH$ from $\infty$ to 0 with force points $\underline{\tilde{x}}$, and $\hat{\alpha}^{i,q}=\frac{\hat{\rho}^{i,q}(\kappa-4)}{2\kappa}$. Note that by definition, for each point $x^{i,q}$, its assigned power parameter is precisely $\alpha^{i,q}$. 
Then by Proposition~\ref{prop:duality}, the law of the left and right boundaries $(\tilde{\eta}_L,\tilde{\eta}_R)$ of $\tilde{\eta}'$ can be produced by the following procedure:

(i) Sample an ${\SLE}_{\tilde{\kappa}} (\tilde{\kappa}-4+\frac{\tilde{\kappa}}{4}\underline{\rho}^L;\frac{\tilde{\kappa}}{2}-2+\frac{\tilde{\kappa}}{4}\underline{\rho}^R)$ process $\eta_1$ from 0 to $\infty$ with force points $\underline{x}$, and given $\eta_1$, sample an ${\SLE}_{\tilde{\kappa}}(-\frac{\tilde{\kappa}}{2};{\tilde{\kappa}}-4+\frac{\tilde{\kappa}}{4}\underline{{\rho}}^{R})$ process $\eta_2$ to the right of $\bbH\backslash\eta_1$ with force points $(0^-,\underline{x}^R)$;

(ii) For $i\ge 1$ and $j=1,2$, let $D_{\eta_j}^{i,q}$ be the connected component of $\bbH\backslash\eta_j$ with $x^{i,q}$ on the boundary, $\sigma_{\eta_j}^{i,q}$ (resp.\ $\xi_{\eta_j}^{i,q}$) be the first (resp.\ last) point on $\partial D_{\eta_j}^{i,q}$ traced by $\eta_j$, and $\psi_{\eta_j}^{i,q}:D_{\eta_j}^{i,q}\to\bbH$ be the conformal map sending $(\sigma_{\eta_j}^{i,q},x^{i,q},\xi_{\eta_j}^{i,q})$ to $(0,\pm 1,\infty)$ where we take the + sign when $q=R$;

(iii) Weight the law of $(\eta_1,\eta_2)$ by 
$$\prod_{i=1}^k|x^{i,L}\cdot(\psi_{\eta_1}^{i,L})'(x^{i,L})|^{{\alpha}^{i,L}}\cdot \prod_{i=1}^\ell |x^{i,R}\cdot(\psi_{\eta_2}^{i,R})'(x^{i,R})|^{{\alpha}^{i,R}}.$$

We conclude by Proposition~\ref{prop:commutation}  that up to a finite multiplicative constant, the law of $(\mathcal{R}(\eta_L),\mathcal{R}(\eta_R))$ agrees with that of $(\tilde{\eta}_L,\tilde{\eta}_R)$. Moreover, by Proposition~\ref{prop:duality} and Theorem~\hyperref[thm:igII]{A}, conditioned on $(\mathcal{R}(\eta_L),\mathcal{R}(\eta_R))$, $\mathcal{R}(\eta')$ is an  $\SLE_\kappa(\frac{\kappa}{2}-4;\frac{\kappa}{2}-4)$ process independently in each connected component of $\bbH\backslash(\mathcal{R}(\eta_L)\cup\mathcal{R}(\eta_R))$ between $\mathcal{R}(\eta_L)$ and $\mathcal{R}(\eta_R)$, which agrees with the conditional law of $\tilde{\eta}'$ given $(\tilde{\eta}_L,\tilde{\eta}_R)$. Therefore the law of $\mathcal{R}(\eta')$ agrees up to a multiplicative constant with that of $\tilde{\eta}'$, which finishes the proof of Theorem~\ref{thm:main} for $\kappa\in (4,8])$.
\end{proof}

\begin{proof}[Proof of Theorem~\ref{thm:main}]
{The theorem follows directly by applying Proposition~\ref{prop:kappa<4} for $\kappa\in(0,4)$, Proposition~\ref{prop:kappa>4} for $\kappa\in(4,8]$ along with~\cite[Theorem 1.1.6]{wang2017level} for $\kappa=4$}.
\end{proof}

\appendix
\section{Proof of Lemma~\ref{lem:resampling}}\label{sec:appendix}
In this section, we give an alternative proof of Lemma~\ref{lem:resampling} based on Lemma~\ref{lem:sle>0prob} and the irreducibility of Markov chain argument from~\cite{Meyn-Tweedie}. Let $(X,\mathcal{F})$ be a state space where $\mathcal{F}$ is a $\sigma$-algebra. For a Markov chain $\{X_n\}_{n\ge0}$ and a measure $\varphi$ on $(X,\mathcal{F})$, if for any $x\in X$ and $A\in\mathcal{F}$, $\bbP(X_n\in A \ \text{for some }n|X_0=x)>0$ whenever $\varphi(A)>0$, then  $\{X_n\}_{n\ge0}$ is said to be \emph{$\varphi$-irreducible}. By~\cite[Theorem 4.0.1]{Meyn-Tweedie}, there exists a unique \emph{maximal irreducibility} measure $\psi$ on $(X,\mathcal{F})$ such that $\{X_n\}_{n\ge0}$ is \emph{$\psi$-irreducible}. Then~\cite[Proposition 10.1.1, Theorem 10.0.1]{Meyn-Tweedie} tells us that a $\psi$-irreducible Markov chain with an invariant probability measure is recurrent, and thus admit a unique invariant measure.

\begin{figure}[htb]
\centering
\begin{tabular}{ccc} 
	\includegraphics[scale = 0.6]{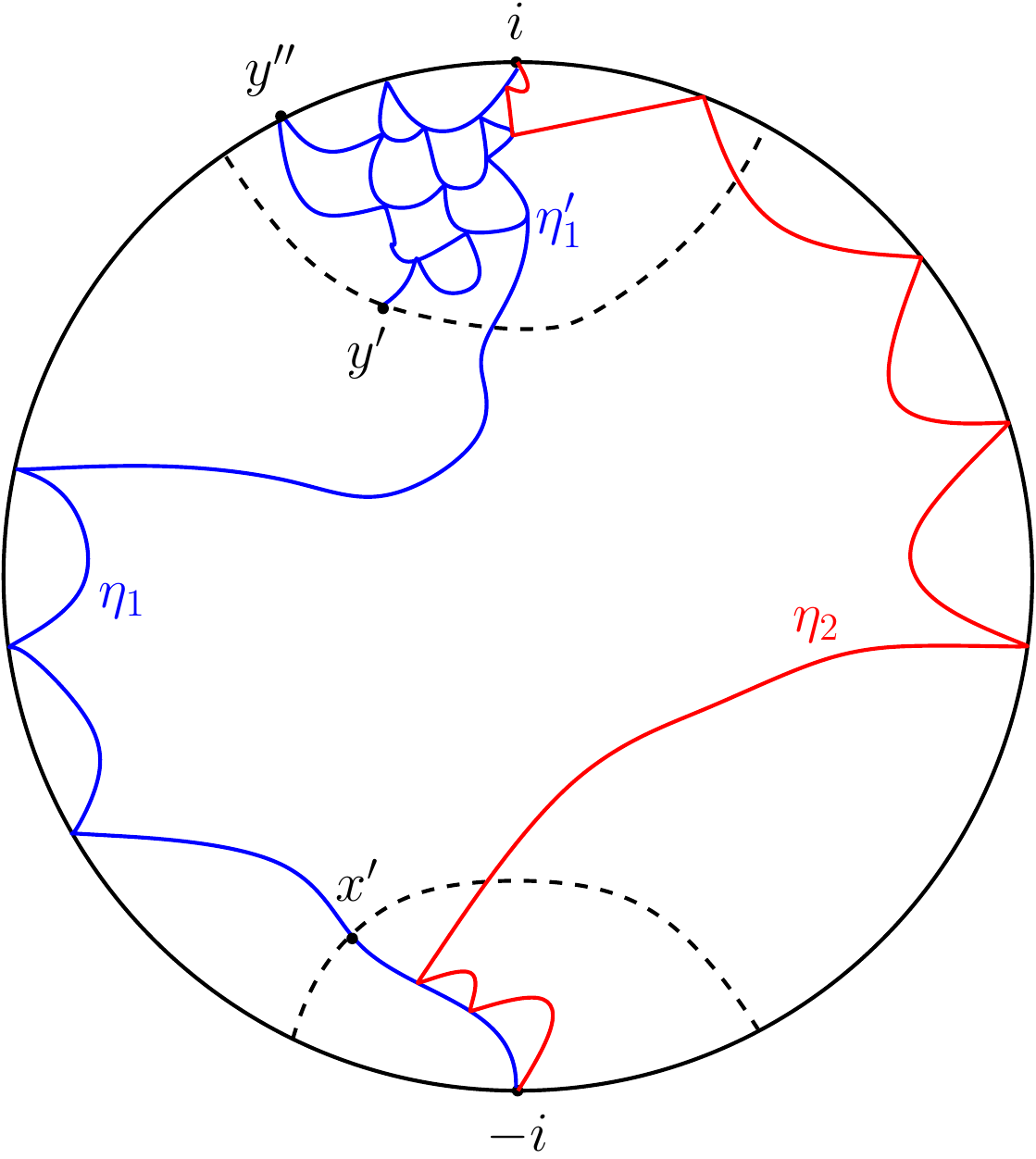}
	&  
	\includegraphics[scale = 0.6]{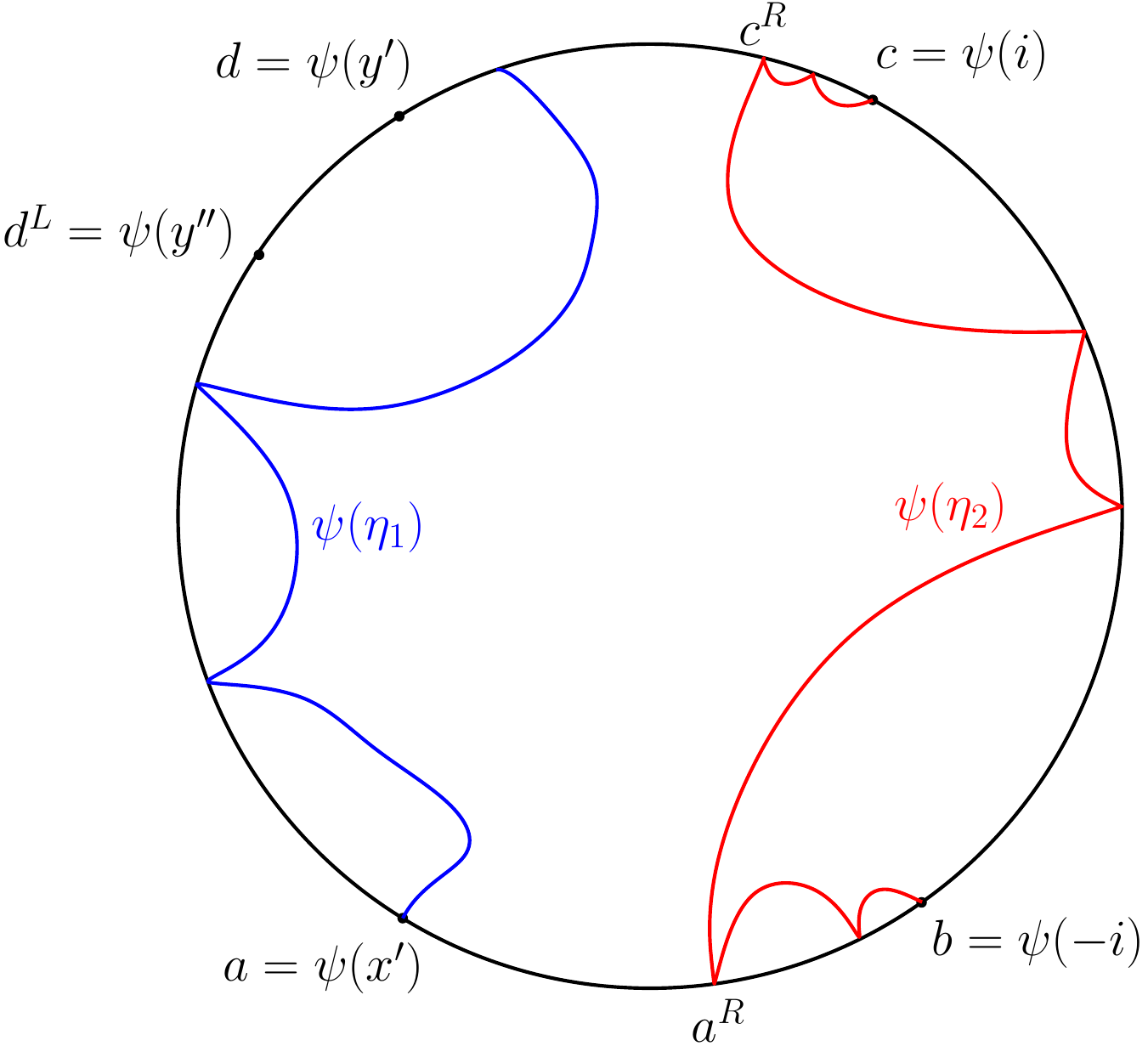}
\end{tabular}
\caption{First step of the proof of Lemma~\ref{lem:resampling}, where we first run $\eta_1$ and its associated counterflowline $\eta_1'$ until they hit $\partial B(-i,\e)$ and $\partial B(i,\e)$ as in~\cite[Proof of Theorem 4.1]{MS16b} and map back to $\bbD$ by a conformal map $\psi$. Given $\psi(\eta_1)$, $\psi(\eta_2)$ is an $\SLE_\kappa(\rho-2;\underline{\rho}^R)$ process in the right component of $\bbD\backslash\psi(\eta_1)$, while given $\psi(\eta_2)$, $\psi(\eta_1)$ is an $\SLE_\kappa(\underline{\rho}^L,\frac{\kappa}{2}-2-\overline{\rho}^L;\rho-2,\kappa-2-\rho)$ in the left component of  $\bbD\backslash\psi(\eta_2)$ with the force points on the right located at $a^R,c^R$. Note that since $\eta_1$ a.s.\ merges into $\eta_1'$, $\psi(\eta_1)$ a.s.\ terminates between $c$ and $d$.}
\label{fig:resamplingpf1}
\end{figure}

Without loss of generality, we take a conformal map $\bbH\to\bbD$ and assume that we are in the setting where $\eta_1,\eta_2$ are continuous curves in $\bbD$ from $-i$ to $i$ such that for $\{j,k\}=\{1,2\}$, given one curve $\eta_k$, the other curve is the $\SLE_\kappa(\underline{\rho}^j)$ process in $\bbD\backslash\eta_k$ (recall the definition of $\SLE_\kappa(\underline{\rho})$ processes in non-simply connected domains in Section~\ref{subsec:pre-ig}) as in the statement of Lemma~\ref{lem:resampling}. By an identical argument of the first step of the proof of~\cite[Theorem 4.1]{MS16b} (i.e., draw counterflowlines $\eta_1'$ by SLE duality, run $\eta_1,\eta_1'$ for a small amount of time and look at the remaining parts of $\eta_1,\eta_2$), we may work on the case where the starting and ending points of $\eta_1,\eta_2$ are distinct. To be more precise, let $a,b,c,d\in\partial\bbD$ be 4 points in counterclockwise order,  ${x}^{0,L},...,x^{k,L},d^L$ be some marked points on the $\overset{{\frown}}{da}$ arc of $\partial \bbD$, and  ${x}^{0,R},...,x^{\ell,R}$ be some marked points on the $\overset{{\frown}}{bc}$ arc of $\partial \bbD$ with $x^{0,R}=a^+$. Let $X$ be the space of non-crossing continuous curves $(\gamma_1,\gamma_2)$ connecting $(a,b)$ with $(d,c)$ in $\overline{\bbD}$ such that $\gamma_1$ (resp.\ $\gamma_2$) is disjoint from $\underline{x}^R\cup \{c\}$ (resp.\ $\underline{x}^L\cup\{d\}$) and does not trace any segment of the arc $\overset{{\frown}}{bc}$ (resp.\ $\overset{{\frown}}{da}$), and $\mathcal{F}$ be the Borel $\sigma$-algebra on $X$ generated by Hausdorff topology.  We are going to show that there exists at most one probability measure $\mu$ on $(X,\mathcal{F})$ such that, for a sample $(\eta_1,\eta_2)$ from $\mu$, conditioned on $\eta_1$, $\eta_2$ is an $\SLE_\kappa(\rho-2;\underline{\rho}^R)$ curve  in the right component of $\bbD\backslash\eta_1$ with force points $b^-;\underline{x}^R$, and conditioned on $\eta_2$, $\eta_1$ is an $\SLE_\kappa(\underline{\rho}^L,\frac{\kappa}{2}-2-\overline{\rho}^L;\rho-2,\kappa-2-\rho)$ in the left component of $\bbD\backslash\eta_2$ with force points $\underline{x}^L,d^L;a^R,c^R$, where $\overline{\rho}^L = \sum_{i=0}^k\rho^{i,L}$, and $a^R$ (resp.\ $c^R$) is the left most point of $\eta_2\cap \overset{{\frown}}{ab}$ (resp.\  $\eta_2\cap \overset{{\frown}}{cd}$). {See Figure~\ref{fig:resamplingpf1} for an illustration}.

We construct a Markov chain on $(X,\mathcal{F})$ as follows. Let $X_0 = (\eta_1^0,\eta_2^0)\in X$, and for given $n\ge0$ and $X_n = (\eta_1^n,\eta_2^n)$, we first uniformly pick $i\in\{1,2\}$ and sample $\eta_i^{n+1}$ in $\bbD\backslash\eta_{3-i}^n$ from the conditional law induced by $\mu$ as described in the previous paragraph. Let $\eta_{3-i}^{n+1}=\eta_{3-i}^n$ and set $X_{n+1} = (\eta_1^{n+1},\eta_2^{n+1})$. Pick $a',b'$ on the arc $\overset{{\frown}}{ab}$, and $c',d'$ on the arc $\overset{{\frown}}{cd}$ and draw two disjoint simple curves $(\gamma^L,\gamma^R)$ in $\bbD$ connecting  $(a',b')$ with $(d',c')$. Let $\Omega^L$  be left  component of $\bbD\backslash\gamma^L$, and $\Omega^R$  be right component of $\bbD\backslash\gamma^R$. Let $X_\Omega:=\{(\gamma_1,\gamma_2)\in X:\gamma_1\subset \overline{\Omega^L},\gamma_2\subset \overline{\Omega^R}\}$. We are going to show that $\{X_n\}_{n\ge0}$ is $\varphi$-irreducible for $\varphi = \mu|_{X_\Omega}$ and thus admits a unique invariant probability measure, which concludes the proof by~\cite{Meyn-Tweedie}. 

Given $(\eta_1^0,\eta_2^0)\in X$, let $D_{0,R}$, ..., $D_{m,R}$ be the connected components of $\bbD\backslash\eta_1^0$ whose boundary has nonempty intersection with both $\overset{{\frown}}{da}$ and $\overset{{\frown}}{bc}$. Note that the number of such components is finite by the continuity of $\eta_1^0$. Then by applying Lemma~\ref{lem:sle>0prob} in each of $D_{0,R}$, ..., $D_{m,R}$, when we sample $\eta_2^1$ in the right component of $\bbD\backslash\eta_1^0$ from the conditional law induced by $\mu$, there is a positive probability such that $\eta_2^1$ is disjoint from the arc $\overset{{\frown}}{da}$. Under this event, by Lemma~\ref{lem:sle>0prob}, when we sample $\eta_1^2$ from the corresponding conditional law in the left component of $\bbD\backslash\eta_2^1$, there is positive chance that $\eta_1^2$ is disjoint from the arc $\overset{{\frown}}{bc}$ and stays in the domain $\overline{\Omega^L}$. (Note that although $\eta_1^2$ merges with the arc $\overset{{\frown}}{cd}$ before reaching the target $c$, Lemma~\ref{lem:sle>0prob} extends to this setting and is still applicable.) Applying Lemma~\ref{lem:sle>0prob} once more, under this event, when we sample $\eta_2^3$ from the corresponding conditional law in the right component of $\bbD\backslash\eta_1^2$, there is a positive probability that $\eta_2^3$ is contained in $\overline{\Omega^R}$. Therefore we conclude that for any $(\eta_1^0,\eta_2^0)\in X$, $\bbP(X_3\in X_{\Omega}|X_0 = (\eta_1^0,\eta_2^0))>0$. Note that this also implies that $\mu(X_\Omega)>0$. See also Figure~\ref{fig:resamplingpf2}.

Finally, from the GFF flow line local absolute continuity~\cite[Proposition 3.4]{MS16a} and~\cite[Theorem 1.2]{MS16a}, given any curves $\gamma_2$, $\tilde \gamma_2$ in $\overline{\Omega^R}$, when we sample $\eta_1$,  $\tilde\eta_1$ in the left component  of $\bbD\backslash\gamma_2$ and $\bbD\backslash\tilde\gamma_2$ according to the conditional law described by $\mu$, when restricted to the event $\eta_1$,  $\tilde\eta_1$ are contained in $\overline{\Omega^L}$, the laws of  $\eta_1$ and  $\tilde\eta_1$ are mutually absolutely continuous w.r.t.\ each other. In particular, this implies that for any $A\in\mathcal{F}$ with $\mu|_{X_{\Omega}}(A)>0$, $\bbP(X_5\in A|X_0 = (\eta_1^0,\eta_2^0))>0$. This justifies the irreducibility of $\{X_n\}_{n\ge0}$ and thus concludes the proof.
\qed
\begin{figure}[htb]
\centering
\begin{tabular}{ccc} 
	\includegraphics[scale=0.6]{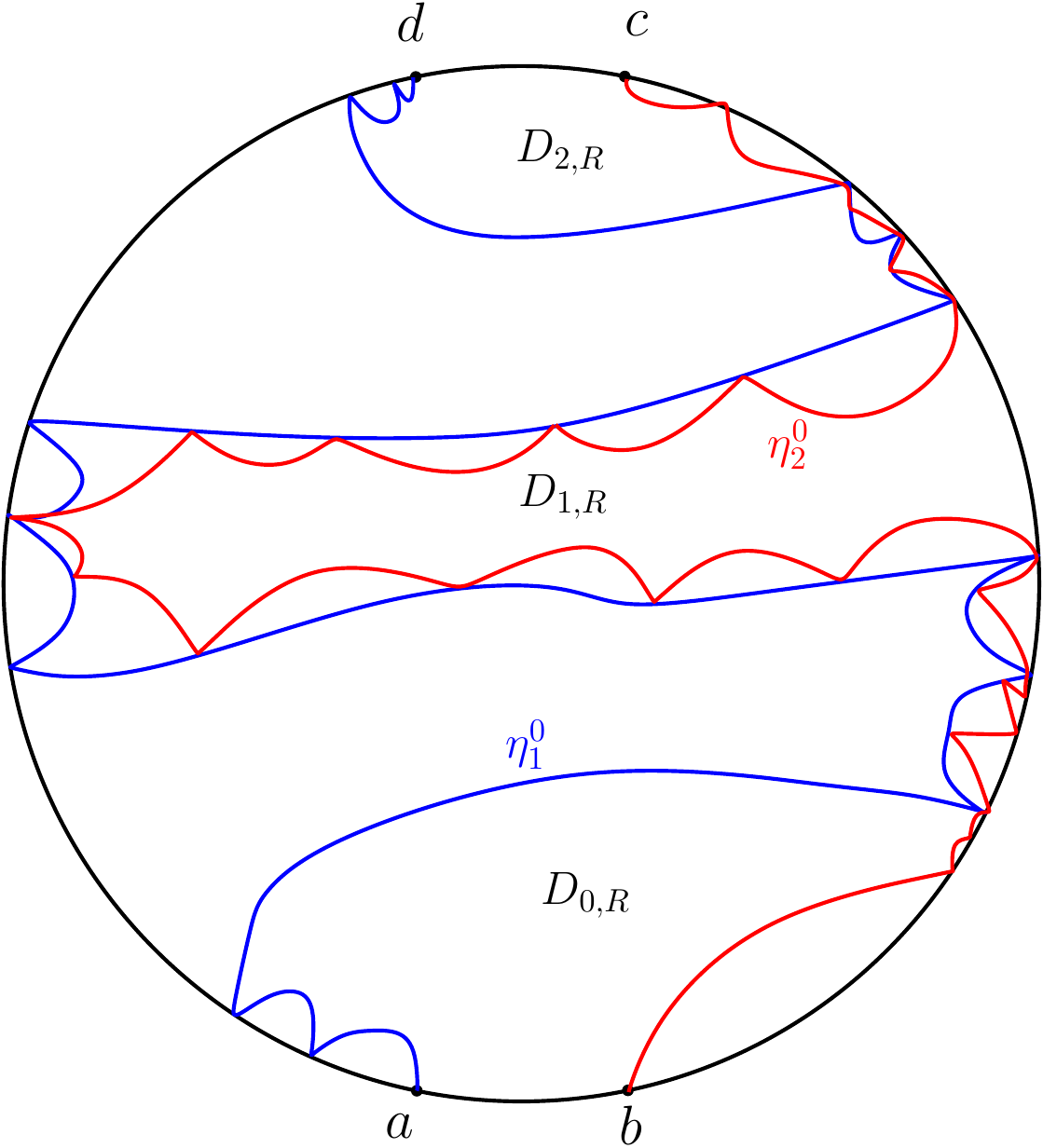}
	& \quad &
	\includegraphics[scale=0.6]{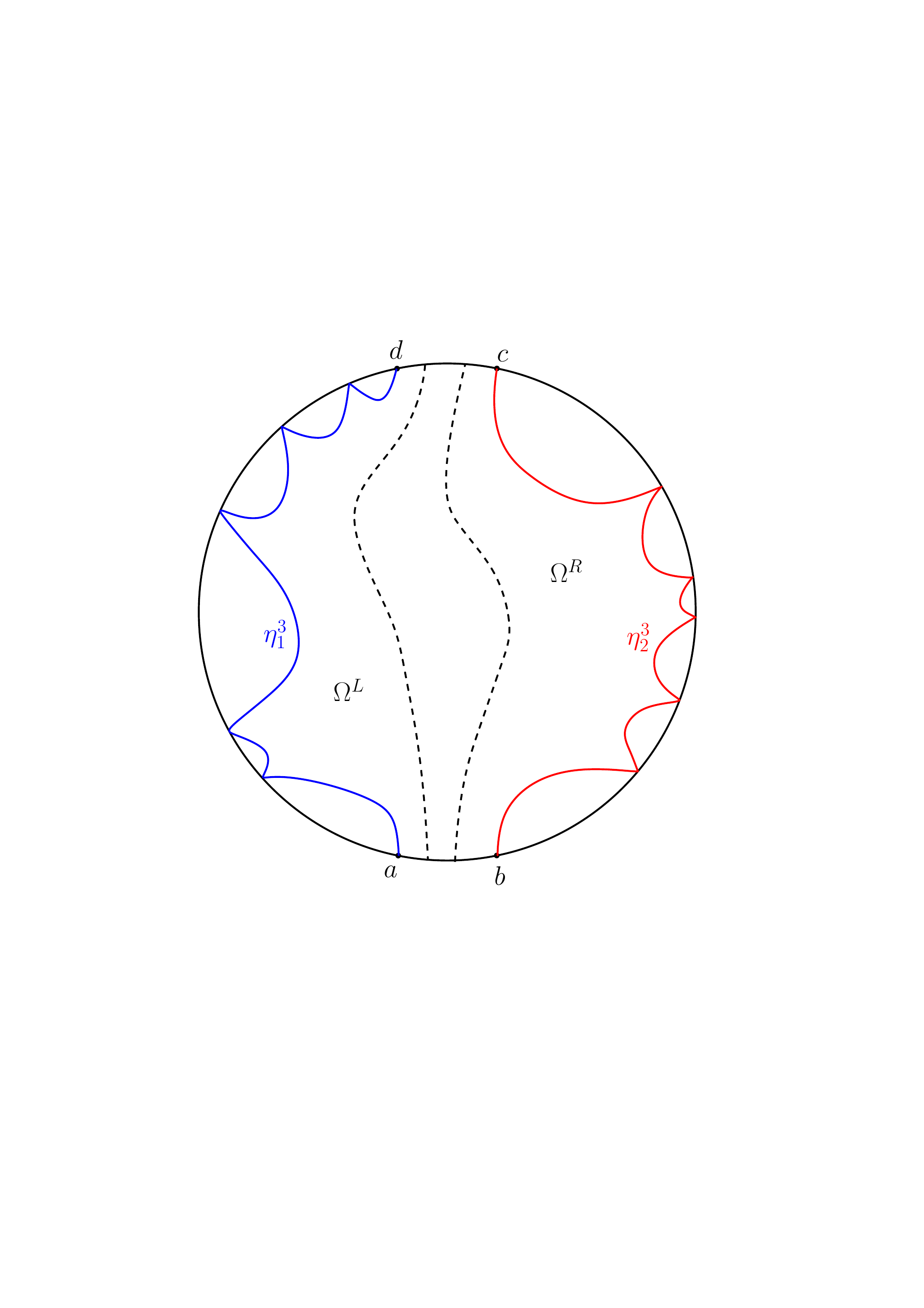}
\end{tabular}
\caption{The Markov chain resampling in Proof of Lemma~\ref{lem:resampling}. \textbf{Left:} Initial phase for the two given curves $(\eta_1^0,\eta_2^0)$, with $D_{0,R},D_{1,R},D_{2,R}$ being the connected components of $\bbD\backslash\eta_1^0$ whose boundary intersects both the $da$ arc and $bc$ arc. \textbf{Right:} By applying Lemma~\ref{lem:sle>0prob} in each of $D_{0,R},D_{1,R},D_{2,R}$, when we sample $\eta_2^1$ in the right component of $\bbD\backslash\eta_1^0$, with positive probability  $\eta_2^1$ is disjoint from the $ad$ arc. It then follows by applying Lemma~\ref{lem:sle>0prob} twice more $\bbP(X_3\in X_{\Omega}|X_0 = (\eta_1^0,\eta_2^0))>0$.}
\label{fig:resamplingpf2}
\end{figure}

\newpage

\end{document}